\numberwithin{equation}{section}
\newtheorem{theorem}{Theorem}[section]
\newtheorem{lemma}[theorem]{Lemma}
\newtheorem{prop}[theorem]{Proposition}
\newtheorem{corollary}[theorem]{Corollary}
\def \bpf {\begin{proof}}
\def \epf {\end{proof}}
\def \beq {\begin{equation*}}
\def \eeq {\end{equation*}}
\def \bsp{\begin{split}}
\def \esp{\end{split}}
\def \hone  {{\dot{H}^{1}}}
\def \CI {{C^\infty}}
\def \vtheta {\vartheta}
\def \mca {{\mathcal A}}
\def \mcb {{\mathcal B}}
\def \mcc {{\mathcal C}}
\def \mcf {{\mathcal F}}
\def \mcg {{\mathcal G}}
\def \mci {{\mathcal I}}
\def \mcl {{\mathcal L}}
\def \mcm {{\mathcal M}}
\def \mcw {{\mathcal W}}
\def \mco {{\mathcal O}}
\def \mcp {{\mathcal P}}
\def \mcq {{\mathcal Q}}
\def \mcr {{\mathcal R}}
\def \mcz {{\mathcal Z}}
\def \mbr {{\mathbb R}}
\def \mr {{\mathbb R}}
\def \mn {{\mathbb N}}
\def \ms {{\mathbb S}}
\def \mbs {{\mathbb S}}
\def\ha {\frac{1}{2}}
\def \tha  {\frac32}
\def \oq {\frac{1}{4}}
\def \ds {\displaystyle}
\def \loc {\operatorname{loc}}
\def \mcn {\mathcal{N}}
\def \ga {{\gamma}}
\def \eps {\varepsilon}   
\def \ups {\upsilon}   
\def \Ups {\Upsilon}   
\def \vphi {\varphi}
\def \vtheta {\vartheta}
\def \zed {{\mathcal{Z}}}
\def \veps {{\vec{\eps}}}
\def \vepa {{\vec{\eps}\,{}^\alpha}}
\def \La {\Lambda}   
\def \lan {\langle}   
\def \ran {\rangle}   
\def \del {\delta}   
\def \lap {\Delta}
\def \p {\partial}
\def \beqq {\begin{equation}}
\def \eeqq {\end{equation}}
\numberwithin{equation}{section}
\begin{document}
\title[Inverse Scattering for Semilinear Wave Equations]{Inverse Scattering for Critical Semilinear Wave Equations}
\author{Ant\^onio S\'a Barreto}
\address{Ant\^onio S\'a Barreto\newline
\indent Department of Mathematics, Purdue University \newline
\indent 150 North University Street, West Lafayette, Indiana,  47907, USA}
\email{sabarre@purdue.edu}
\author{Gunther Uhlmann}
\address{Gunther Uhlmann \newline\indent  Department of Mathematics, University of Washington, Seattle, WA 98195, \newline\indent 
IAS, HKUST, Clear Water Bay, Hong Kong, China}
\email{gunther@math.washington.edu}
\author{Yiran Wang}
\address{Yiran Wang\newline \indent Department of Mathematics Emory University \newline \indent 400 Dowman Drive
Atlanta, GA 30322}
\email{yiran.wang@emory.edu}
\keywords{Nonlinear wave equations, radiation fields, scattering, inverse scattering. AMS mathematics subject classification: 35P25 and 58J50}
\dedicatory{\today}
\begin{abstract}   We show that the scattering operator for defocusing energy critical 
semilinear  wave equations $\square u+f(u)=0,$ $f\in C^\infty(\mr)$ and $f\sim u^5,$  in three space dimensions, determines $f$.
\end{abstract}
\maketitle
\tableofcontents
%%%%%%%%%%%%%%
\section{Introduction}
We consider the following question:  Given a nonlinear wave equation for which there is a well defined scattering operator, even if for only small data, what information about the equation can be extracted from its scattering operator?   The similar question for linear equations has been very well studied, even though there are many important unanswered questions,  see for example the surveys \cite{Uhl,Uhl1} and references cited there for a general discussion about linear inverse scattering problems, but there seems to be relatively few results for nonlinear inverse scattering problems, see for example \cite{CarGal,Fur,MorStr,PauStr} and references cited there.

We study inverse scattering for semilinear wave equations of the form
\begin{equation}
\begin{split}
&\left(\p_t^2 -  \lap\right)u + f(u)= 0, \ \ (t, x) \in (0, \infty)\times \mbr^3, \\
 & \;\ \;\ \;\ \;\ \;\ u(0, x) = \vphi(x), \ \ \p_t u(0, x) = \psi(x),
\end{split} \label{Weq}
\end{equation}
where $\Delta=\sum_{j=1}^3 \p_{x_j}^2$ is the negative Euclidean Laplacian.  The nonlinear term is of the form $f(u)=u h(u),$ so one can think of $h(u)$ as a nonlinear potential.    The problem is, given $f(u)$ such that the scattering operator  is well defined, can one determine $f(u)$ from the scattering operator?

 We shall focus our attention to the case where \eqref{Weq} has global solution for arbitrary  finite energy data and a well defined global scattering operator.  This occurs when $f(u) \sim u^5$ as $|u|\nearrow \infty$ or $|u|\searrow 0$ -- see the precise assumptions {\bf H.\ref{H1}} to {\bf H.\ref{H4}} below.    Under these conditions, the existence and uniqueness of global solutions to \eqref{Weq} was established in a series of papers by several authors, beginning with the work of Strauss \cite{Str}, Rauch \cite{Rau}, Struwe  \cite{Str} and Grillakis \cite{Gri1}, and finally  Shatah and Struwe \cite{ShaStr1,ShaStr2} established the global existence and uniqueness of weak solutions in the space $X_{\loc}(\mr,\mr^3)$ defined in \eqref{defX} below.  These became known as the Shatah-Struwe solutions. Masmoudi and Planchon  \cite{MasPla} have relaxed the necessary condition for uniqueness, but uniqueness is not known for other possible weak solutions with finite energy. Bahouri and G\'erard \cite{BahGer} proved asymptotic completeness for the  Shatah-Struwe solutions and defined the semilinear scattering operator.

The problem of determining a linear potential from the scattering operator has a very long history beginning with the work of  Faddeev \cite{Fad}. In the case of time dependent potentials this problem was considered by Stefanov \cite{Ste}.    Morawetz and Strauss \cite{MorStr} studied the inverse scattering problem for the Klein-Gordon equation and proved that, under certain conditions, the scattering operator determines a real analytic potential $f(u)=u h(u).$  Bachelot \cite{Bac} extended their results to the case where the potential is of the form $f(x,u),$ which is real analytic in $u.$  Carles and Gallagher \cite{CarGal} proved results similar to the ones by Morawetz and Stauss for the several dispersive equations, including the nonlinear Schr\"odinger, wave  and Klein Gordon equation with critical nonlinearities.     Furuya \cite{Fur} has recently studied the related problem, also for the real analytic nonlinearities, but on the frequency side, for the Helmholtz equation.  Pausader and Strauss \cite{PauStr} studied inverse scattering for the fourth-order nonlinear wave equation, or the Bretherton equation. Sasaki studied inverse scattering for Hartree equation \cite{Sas1} and for the Schr\"odinger equation with the Yukawa potential \cite{Sas2}.  The common point of these papers is that the scattering operator in these settings is analytic, see for example Definition 1.1 of \cite{CarGal}, for an explanation.

In the case discussed here the potential is not real analytic.  The novelty is that we use microlocal analysis methods to study the propagation of singularities generated by the interaction of semilinear conormal waves to show that, under hypothesis  {\bf H.\ref{H1}} to {\bf H.\ref{H4}}  and {\bf B.\ref{H4N}} below,  the scattering operator uniquely determines $f(u)$ fo all $u\in \mr.$   Kurylev, Lassas and Uhlmann \cite{KLU} were the first to use singularities generated by the interaction of nonlinear conormal waves to study inverse problems for nonlinear wave equations.

\section{Preliminaries}

 The scattering operator for the semilinear equation \eqref{Weq} is defined by comparing the asymptotic behavior of  the  Shatah-Struwe of solutions of \eqref{Weq}  to the asymptotic behavior of solutions of the Cauchy problem for the linear wave equation
\begin{equation}
\begin{split}
&\left(\p_t^2-\Delta\right)v(t,x)=0,  \\
& v(0,x)=\vphi(x), \;\ \p_t v(0,x)= \psi(x).
\end{split}\label{lweq}
\end{equation}

Both equations \eqref{Weq} and \eqref{lweq} have conserved energies.  The energy form of the solution of the linear wave equation \eqref{lweq} is given by
\begin{equation}\label{lenergy}
E_0(v, \p_t v)(t)= \ha \int_{\mr^3} \left( |\p_t v(t,x)|^2 + |\nabla_x v(t,x)|^2 \right) \; dx,
\end{equation} 
while the energy form of the solution of the semilinear wave equation \eqref{Weq}, is  given by
\begin{gather}
\begin{gathered}
E(u, \p_t u)(t)= \ha \int_{\mr^3} \left( |\p_t u(t,x)|^2 + |\nabla_x u(t,x)|^2 + F(u(t,x)\right) \; dx,\\
\text{ where } F(u)= \int_0^u f(s) ds,
\end{gathered}\label{energy}
\end{gather}
and in both cases we have
\begin{gather}
\begin{gathered}
E_0(v, \p_t v)(t)= E_0(v, \p_t v)(0), \\
E(u, \p_t u)(t)= E(u, \p_t u)(0). 
\end{gathered}\label{cons}
\end{gather}

One has to make some assumptions on $f(u)$ so that one can define a scattering operator for \eqref{Weq}. As in \cite{BahGer,BasSaB,ShaStr1},  we shall assume that $f(u)$ satisfies the following hypothesis: \\
 \begin{enumerate}[\bf H1.]
 \item \label{H1} $f(u)=u h(u),$ $h$ is even (or $h(u)= h_0(u^2)$) and  $\frac{1}{C}|u|^4\leq |h(u)| \leq C|u|^4$ for all $u\in \mr.$ \\
\item\label{H2} $u f'(u) \sim f(u)$ \text{ as } $|u|\nearrow \infty$   and as  $|u| \searrow 0.$\\
\item \label{H3}  The function $\ds F(u)=\int_0^u f(s) ds$  is convex.\\
\item  \label{H4}  There exit $C_j>0$ such that $|f^{(j)}(u)|\leq C_j|u|^{5-j},$  $0\leq j\leq 5.$ \\
\end{enumerate}
To be able to solve the inverse problem, we will add the following assumption:\\
\begin{enumerate}[\bf B1.]
\item\label{H4N}   $f(u)$  is such that  $f^{(4)}(u)= 0$  if and only if  $u=0.$\\
\end{enumerate}

 For example, $f(u)=u^5,$  satisfies {\bf B.\ref{H4N}}, and in general perturbation of $f(u)=u^5$ such that $f^{(4)}(u)= Cu(1+ Z(u)),$  $C>0,$ $Z(u)$ even, $|Z(u)|<1,$ and   $f^{(j)}(0)=0,$ $j=0,1,2,3,$  then $f(u)$ satisfies 
 {\bf H.\ref{H1}} to {\bf H.\ref{H4}} and {\bf B.\ref{H4N}}.

 Under assumptions {\bf H.\ref{H1}} to {\bf H.\ref{H4}} on the function $f,$  Shatah and Struwe \cite{ShaStr1,ShaStr2} and  Bahouri and G\'erard \cite{BahGer} examined the well posedness of \eqref{Weq} for solutions in the spaces 
\begin{gather}
\begin{gathered}
X(\mr;\mr^3)= C^0(\mr; {\dot H}^1(\mr^3) )\cap C^1(\mr; L^2(\mr^3))\cap L^5(\mr; L^{10}(\mr^3)), \text{ and } \\
X_{\loc}(\mr; \mr^3)= C^0(\mr; {\dot H}^1(\mr^3) )\cap C^1(\mr; L^2(\mr^3))\cap L_{\loc}^5(\mr; L^{10}(\mr^3)), \\
\text{ where } \\
{\dot{H}}^{1}(\mr^3)= \{ v:  \; ||v||_{\dot H^1}^2= \int_{\mr^3} |\nabla_z v|^2 \, dx<\infty\}, \text{ and } \\
   L^p(\mr_t; L^{q}(\mr^3))= \{ u(t,x): ||u||_{L^p;L^{q}}^q= \int_{\mr}\left(\int_{\mr^3} |u(t,x)|^{q} \; dx\right)^{\frac{p}{q}} \; dt<\infty \;\ \},
\end{gathered} \label{defX}
   \end{gather} 
and proved the following:

\begin{theorem}[Shatah and Struwe \cite{ShaStr1}, Bahouri and G\'erard \cite{BahGer}]\label{ShaStru}
 Under the conditions {\bf H.\ref{H1}} to {\bf H.\ref{H4}}, for any $(\vphi, \psi) \in \hone(\mr^{3}) \times L^{2}(\mr^{3})$, there exists a unique $u \in X(\mr; \mr^3)$ satisfying
\eqref{Weq}.
\end{theorem}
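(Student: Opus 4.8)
The plan is to follow the standard three-step scheme for energy-critical defocusing wave equations: local well-posedness by a fixed-point argument in a Strichartz space, an a priori bound on the $\dot H^1\times L^2$ norm coming from the conserved energy and the defocusing sign, and a non-concentration argument that promotes the a priori bound to a spacetime bound and hence to global existence. I would begin with the local theory. For $\Box v=G$, $v(0)=\varphi$, $\partial_t v(0)=\psi$, one has the energy-critical Strichartz inequality
\[
\|v\|_{L^5(I;L^{10}(\mr^3))}+\sup_{t\in I}\|(v,\partial_t v)(t)\|_{\dot H^1\times L^2}\lesssim \|(\varphi,\psi)\|_{\dot H^1\times L^2}+\|G\|_{L^1(I;L^2(\mr^3))}.
\]
By \textbf{H1} and \textbf{H4}, $|f(u)|\lesssim|u|^5$ and $|f(u)-f(w)|\lesssim(|u|^4+|w|^4)|u-w|$, so H\"older in $x$ gives $\|f(u)\|_{L^2_x}\lesssim\|u\|_{L^{10}_x}^5$ together with the matching Lipschitz bound; integrating in $t$ and using H\"older in $t$, a contraction closes on a ball of $L^5(I;L^{10})\cap C(I;\dot H^1)\cap C^1(I;L^2)$ once the $L^5(I;L^{10})$ norm of the free evolution of $(\varphi,\psi)$ is small, which holds on short intervals $I$ since that norm is finite on all of $\mr$. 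This produces a unique solution on a maximal existence interval $[0,T^*)$ together with the blow-up alternative $T^*<\infty\Rightarrow\|u\|_{L^5([0,T^*);L^{10})}=\infty$; uniqueness in the class $X(\mr;\mr^3)$ is built into the same estimates.

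Second, multiplying \eqref{Weq} by $\partial_t u$ and integrating by parts gives $\frac{d}{dt}E(u,\partial_t u)(t)=0$. The defocusing sign — so that $uf(u)=u^2h(u)\ge0$ and hence $F(u)\ge0$, compatible with the convexity assumed in \textbf{H3} — then yields the uniform bound $\|\nabla u(t)\|_{L^2}^2+\|\partial_t u(t)\|_{L^2}^2\le 2E(u,\partial_t u)(0)$. Because the problem is energy-\emph{critical}, this conservation law does not by itself control $\|u\|_{L^5_tL^{10}_x}$, so a finer argument is needed.

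Third, and at the heart of the matter, is the Struwe--Grillakis non-concentration estimate, extended to the general nonlinearity by Shatah--Struwe and Bahouri--G\'erard. Assume $T^*<\infty$. By finite speed of propagation one works inside a backward light cone with vertex $(T^*,x_0)$; the flux identity shows that the energy flux through the lateral boundary of the truncated cone $\{t_1\le t<T^*\}$ tends to $0$ as $t_1\uparrow T^*$. One then tests \eqref{Weq} against the Morawetz-type multiplier $(t-T^*)\partial_t u+(x-x_0)\cdot\nabla u+u$ over the truncated cone; here \textbf{H2} (which gives $uf'(u)\sim f(u)$, so the dilation bulk term $\int(uf(u)-6F(u))\,dx$ — identically zero when $f(u)=u^5$ — is controlled) together with \textbf{H3} is exactly what makes the flux/bulk bookkeeping close. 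One concludes that the energy in shrinking cones with vertex $(T^*,x_0)$ tends to $0$, i.e.\ no concentration, which contradicts $T^*<\infty$ by Struwe's argument and, run on bounded intervals, gives $\|u\|_{L^5([0,T];L^{10})}<\infty$ for every finite $T$. Finally a perturbative decay argument as in Bahouri--G\'erard upgrades this to $\|u\|_{L^5(\mr;L^{10})}<\infty$, so that $u\in X(\mr;\mr^3)$.

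The main obstacle is precisely this last step, and within it the fact that the Morawetz/non-concentration machinery must be run for an arbitrary $f$ obeying only \textbf{H1}--\textbf{H4} rather than the exact critical power: hypotheses \textbf{H2} and \textbf{H3} are the structural conditions that replace the exact identities available for $f(u)=u^5$ and render the near-critical bulk terms controllable by the vanishing flux and the conserved energy. Everything else is the standard energy-critical toolkit. Since the statement is quoted verbatim from \cite{ShaStr1,ShaStr2,BahGer}, in practice I would reproduce the local theory and the reduction to non-concentration in full and cite those papers for the non-concentration estimate itself.
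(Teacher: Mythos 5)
This statement is quoted background: the paper gives no proof of it, simply attributing existence and uniqueness in $X_{\mathrm{loc}}(\mr;\mr^3)$ to Shatah--Struwe and the improvement to $X(\mr;\mr^3)$ (the global $L^5L^{10}$ bound) to Bahouri--G\'erard. Your outline correctly reconstructs exactly the argument of those cited works (Strichartz local theory, energy conservation under \textbf{H1}--\textbf{H4}, Grillakis--Struwe non-concentration, and the Bahouri--G\'erard/Bahouri--Shatah step for the global spacetime bound), so it is consistent with the paper's treatment, which likewise defers the details to the literature.
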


Shatah and Struwe \cite{ShaStr1,ShaStr2} showed the existence and uniqueness for $u \in X_{\loc}(\mr; \mr^3)$ and  Bahouri and G{\'e}rard \cite{BahGer} showed that in fact such solutions $u \in X(\mr; \mr^3).$  These are known as the Shatah-Struwe solutions of \eqref{Weq}.

One of the key points in the proof of Theorem \ref{ShaStru} is the following Strichartz estimate:
 \begin{theorem}[Ginibre and Velo \cite{GinVel}]\label{GV}  Given $r\in [6,\infty)$, let $q$ satisfy
 \begin{gather*}
 \frac{1}{q} + \frac{3}{r} = \frac{1}{2}.
 \end{gather*}
 Then there exists $C_{r}$ such that for every $w(t,z)$ defined on $\mr \times \mr^{3}$, and for any $T,$
\begin{gather}
 ||w||_{L^q([0,T)); L^r(\mr^3))} \leq  C_{r}\left( E_0(w,\p_t w)(0) +  ||\square w||_{L^1([0,T)); L^2(\mr^3))} \right). \label{STES0}
    \end{gather}
  \end{theorem}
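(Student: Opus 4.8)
This is the classical non-endpoint Strichartz inequality for the wave equation in $\mr^3$, and I would prove it along the standard lines, as follows. Write $|D|=\sqrt{-\lap}$ and $U(t)=e^{it|D|}$, so that $\cos(t|D|)=\ha(U(t)+U(-t))$ and $\sin(t|D|)/|D|=\frac{1}{2i}(U(t)-U(-t))|D|^{-1}$. For sufficiently regular $w$ (and in general by approximation) Duhamel's formula gives the representation
\[
w(t)=\cos(t|D|)\,w(0)+\frac{\sin(t|D|)}{|D|}\,\p_t w(0)+\int_0^t\frac{\sin((t-s)|D|)}{|D|}\,(\square w)(s)\,ds,
\]
valid for arbitrary $w$ with $\square w$ treated as a given source. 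Hence it suffices to establish (i) the homogeneous estimate $\|U(t)g\|_{L^q(\mr;L^r(\mr^3))}\lesssim\|g\|_{\hone(\mr^3)}$, which applied to $g=w(0)$ and to $g=|D|^{-1}\p_t w(0)$ controls the first two terms by $\|w(0)\|_{\hone}+\|\p_t w(0)\|_{L^2}\lesssim E_0(w,\p_t w)(0)^{1/2}$; and (ii) the bound on the Duhamel term by $\|\square w\|_{L^1([0,T);L^2(\mr^3))}$. Granting (i), estimate (ii) follows from Minkowski's integral inequality: the $L^q_tL^r_x$-norm of the Duhamel term is at most $\int_0^T\|U(\cdot-s)|D|^{-1}(\square w)(s)\|_{L^q_t(\mr;L^r_x)}\,ds\lesssim\int_0^T\|(\square w)(s)\|_{L^2}\,ds$, using time-translation invariance of (i) and the fact that truncating the inner integral to $0<s<t$ only decreases the $L^q_t$-norm; since the source is measured in $L^1_t$, no Christ--Kiselev-type maximal argument is needed.

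For (i) I would run the usual $TT^*$ argument. Let $Tg=U(t)|D|^{-1}g$, viewed as a map $L^2(\mr^3)\to L^q_tL^r_x$; then $\|U(t)g\|_{L^qL^r}\lesssim\|g\|_{\hone}$ is equivalent to $\|Tg\|_{L^qL^r}\lesssim\|g\|_{L^2}$, and hence to $\|TT^*G\|_{L^q_tL^r_x}\lesssim\|G\|_{L^{q'}_tL^{r'}_x}$, where $TT^*G=\int U(t-s)|D|^{-2}G(s)\,ds$. This I would deduce from two ingredients: conservation of the $L^2$ norm, $\|U(t)g\|_{L^2}=\|g\|_{L^2}$, and the fixed-time, unit-frequency dispersive decay estimate for the half-wave group in three dimensions,
\[
\big\|U(\tau)P_1 f\big\|_{L^\infty(\mr^3)}\lesssim(1+|\tau|)^{-1}\,\|f\|_{L^1(\mr^3)},
\]
$P_1$ a Littlewood--Paley projector onto frequencies $\sim 1$. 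Interpolation gives $\|U(\tau)P_1 f\|_{L^r}\lesssim(1+|\tau|)^{-(1-2/r)}\|f\|_{L^{r'}}$ for $2\le r\le\infty$, and rescaling to frequency $\lambda$ yields $\|U(\tau)P_\lambda f\|_{L^r}\lesssim\lambda^{3(1-2/r)}(1+\lambda|\tau|)^{-(1-2/r)}\|f\|_{L^{r'}}$. Inserting this into $P_\lambda TT^*G$, using $|D|^{-2}\sim\lambda^{-2}$ on frequency $\lambda$, and performing the $s$-integral by Young's inequality in $t$ against the kernel $(1+\lambda|\cdot|)^{-(1-2/r)}\in L^{q/2}_t$ — which is finite precisely because $r<\infty$ — gives $\|P_\lambda TT^*G\|_{L^qL^r}\lesssim\lambda^{\,1-6/r-2/q}\|P_\lambda G\|_{L^{q'}L^{r'}}$, and the exponent vanishes exactly because $\frac1q+\frac3r=\frac12$. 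Summing over dyadic $\lambda$ by a standard Littlewood--Paley argument (using $2\le q,r<\infty$, with the remaining pair $(q,r)=(\infty,6)$ covered by the Sobolev embedding $\hone(\mr^3)\hookrightarrow L^6(\mr^3)$ directly) completes (i). The hypothesis $r\in[6,\infty)$ is exactly $q\in(2,\infty]$, so one stays off the forbidden wave endpoint $(q,r)=(2,\infty)$ in dimension three, and this elementary argument suffices — no Keel--Tao endpoint machinery is required.

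The only genuinely analytic input is the unit-frequency dispersive estimate, which I regard as the heart of the matter and would expect to be the main step. It follows from stationary phase applied to $\int e^{i(x\cdot\xi+\tau|\xi|)}\chi(\xi)\,d\xi$ with $\chi$ supported in an annulus: the phase has no critical points unless $x$ is parallel to a point of the cone with $|x|\approx|\tau|$, and there the nonvanishing curvature of the sphere of directions yields the $|\tau|^{-(n-1)/2}=|\tau|^{-1}$ decay (equivalently, one reads it off the explicit forward fundamental solution of $\square$ in $\mr^3$, a measure carried by the light cone). The rest is bookkeeping: matching the $|D|^{-1}$ in $Tg$ (so $|D|^{-2}$ in $TT^*$) to the $\hone$-scaling, and noting that the $s$-integration in Duhamel is harmless because the source lies in $L^1_t$. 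As all of this is classical, in the actual write-up I would simply cite Ginibre--Velo \cite{GinVel} (or the abstract Keel--Tao theorem applied with the wave dispersive estimate as input).
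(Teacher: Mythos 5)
Your proposal is a correct outline, but there is nothing in the paper to compare it against: Theorem \ref{GV} is quoted as a known result of Ginibre--Velo \cite{GinVel} and used as a black box (the only thing the paper adds is the elementary energy inequality \eqref{STES0-1} to pass to \eqref{STES}); the authors give no proof of \eqref{STES0} themselves. Your sketch is the standard non-endpoint argument, and the bookkeeping checks out: interpolating the unit-frequency dispersive bound against $L^2$ conservation gives decay $(1+|\tau|)^{-(1-2/r)}$, rescaling produces $\lambda^{3(1-2/r)}(1+\lambda|\tau|)^{-(1-2/r)}$, the Young kernel lies in $L^{q/2}_t$ exactly when $(1-\tfrac{2}{r})\tfrac{q}{2}>1$, which holds for all $r<\infty$ on the line $\tfrac1q+\tfrac3r=\tfrac12$, and the net power $\lambda^{1-6/r-2/q}$ indeed vanishes on that line; $r\in[6,\infty)$ does correspond to $q\in(2,\infty]$, so the forbidden endpoint $(2,\infty)$ never occurs, the pair $(\infty,6)$ is correctly handled by energy conservation plus $\hone(\mr^3)\hookrightarrow L^6(\mr^3)$, and since the source is measured in $L^1_tL^2_x$ the retarded Duhamel term is controlled by Minkowski alone, with no Christ--Kiselev argument needed. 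Littlewood--Paley summation with $2\le q,r<\infty$ via the square function is standard, so modulo writing out those routine details the proof is complete.

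One small point of normalization you should fix in a write-up: with $E_0$ defined as in \eqref{lenergy} (quadratic in the data), the right-hand side of \eqref{STES0} should involve the energy \emph{norm} $\big(\|\nabla w(0)\|_{L^2}^2+\|\p_t w(0)\|_{L^2}^2\big)^{1/2}$, i.e.\ $E_0^{1/2}$, which is what your argument actually produces. The paper itself is inconsistent here, since immediately after the theorem it writes $E_0(w,\p_t w)^2(t)=\int_{\mr^3}\left(|\nabla_x w|^2+|\p_t w|^2\right)dx$, treating $E_0$ as the square root of the quantity in \eqref{lenergy}; your $E_0(w,\p_t w)(0)^{1/2}$ matches the intended meaning, but state explicitly which convention you are using so the homogeneity of both sides of \eqref{STES0} is manifestly the same.
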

But the usual energy method easily shows that
\begin{gather*}
E_0( w,\p_t w)^2(t)=\int_{\mr^3}\left( |\nabla_x w(t,x)|^2 + |\p_t w(t,x)|^2\right) dx
\end{gather*}
satisfies
\begin{gather}
E_0(w,\p_t w)(T)\leq E_0(w,\p_t w)(0) + || \square w||_{L^1([0,T]; L^2(\mr^3))},\label{STES0-1}
\end{gather}
and therefore, for $t\in [0,T],$  we have
\begin{gather}
 ||w||_{L^q([0,T]); L^r(\mr^3))} + E(w,\p_t w)(T) \leq  C_r\left( E_0(w,\p_t w)(0) +  ||\square w||_{L^1([0,T]); L^2(\mr^3))} \right), \label{STES}
 \end{gather}
we kept the notation $C_r,$ for a constant which depends only on $r.$

Bahouri and G\'erard \cite{BahGer}   proved the asymptotic completeness for solutions to \eqref{Weq} and defined the corresponding scattering operator.  %We recall the proof of their theorem in Section \ref{BAH}. 
They showed that given finite energy Cauchy data $(\vphi,\psi),$ and if $u$ is the corresponding Shatah-Struwe solution to \eqref{Weq}, there exist $(\vphi_0^\pm,\psi_0^\pm)\in L^2(\mr^3) \times {\dot{H}}^1(\mr^3)$   such that  if $v^\pm(t,x)$ are the solutions of the  Cauchy problem for the linear wave equation \eqref{lweq}  with initial data 
$(\vphi_0^\pm,\psi_0^\pm),$ then
\begin{equation}
\begin{split}
& \lim_{t\rightarrow \infty} E_0( v^+(t)- u(t), \p_t( v^+(t)- u(t))) =0, \\
& \lim_{t\rightarrow -\infty} E_0( v^-(t)- u(t), \p_t( v^-(t)- u(t))) =0.
\end{split}
\end{equation}
They also proved that  the maps
\begin{equation}
\begin{split}
\Omega_\pm : \;  {\dot{H}}^1(\mr^3) & \times L^2(\mr^3)  \longrightarrow {\dot{H}}^1(\mr^3) \times L^2(\mr^3) \\
& (\vphi_0^\pm,\psi_0^\pm) \longmapsto (\vphi,\psi)
\end{split}\label{acomp}
\end{equation}
are isometries.

  As usual, the nonlinear scattering operator was then defined by Bahouri and G\'erard \cite{BahGer} as the map
\begin{equation}
\begin{split}
\mcm: \;  {\dot{H}}^1(\mr^3) \times & L^2(\mr^3)   \longrightarrow {\dot{H}}^1(\mr^3) \times L^2(\mr^3) \\
& \mcm= \Omega_+ \circ \Omega_-^{-1}.
\end{split}\label{Moe}
\end{equation}
The operators $\Omega_\pm$ are known as the nonlinear M{\o}ller wave operators and $\mcm$ as the nonlinear M{\o}ller scattering operator.

  We will prove the following:
\begin{theorem}\label{main} Let $f_j(u)$ $j=1,2$ satisfy hypotheses {\bf H.\ref{H1}} to {\bf H.\ref{H4}}  and {\bf B.\ref{H4N}}. 
 Let $\mcm_j$ be the nonlinear M{\o}ller scattering operator defined in \eqref{Moe} associated with $f_j.$  If $\mcm_1=\mcm_2,$ then $f_1(u)=f_2(u)$ for all $u\in \mr.$
\end{theorem}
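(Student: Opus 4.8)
The plan is to show that $\mcm_j$ determines $f_j^{(4)}(a)$ for every $a\in\mr$, and then to recover $f_j$ by integrating four times; this last step is legitimate because hypothesis {\bf H.\ref{H4}} forces $f_j^{(k)}(0)=0$ for $0\le k\le 4$, so $f_j$ is the fourfold antiderivative of $f_j^{(4)}$ with all constants of integration equal to zero. First I would observe that, by finite speed of propagation, knowing $\mcm_j=\Omega_+\circ\Omega_-^{-1}$ amounts to knowing, for every finite-energy incoming datum, the outgoing radiation field of the corresponding Shatah--Struwe solution of $\square u+f_j(u)=0$ (Theorem~\ref{ShaStru}); so it suffices to read $f_j^{(4)}$ off the high-frequency behaviour of this incoming-to-outgoing correspondence.

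The probing configuration I would use is the following. Fix $a\in\mr$, a point $x_q\in\mr^3$, and a time $t_q>0$ which will eventually be sent to zero. Let $v$ be smooth and compactly supported with $v\equiv 1$ near $x_q$, and let $u_0$ be the Shatah--Struwe solution with Cauchy data $(av,0)$ at $t=0$ — a smooth solution, by propagation of regularity for the defocusing energy-critical equation. On the domain of dependence of $(t_q,x_q)$, a ball about $x_q$ of radius $t_q$ that shrinks as $t_q\to 0$, the local theory keeps this solution within $O_a(t_q)$ of its initial value, so $u_0(t_q,x_q)\to av(x_q)=a$ as $t_q\to 0$, and crucially this limit is the same whether $f_1$ or $f_2$ is used. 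Superimpose on the incoming datum a term $\eps\,w$, where the free evolution of $w$ is a sum of four conormal ``pulses'' singular along characteristic hypersurfaces $\Sigma_1,\dots,\Sigma_4$ that are arranged to meet transversally at the single point $q=(t_q,x_q)$, with prescribed nonvanishing symbols. Writing the solution of $\square u+f_j(u)=0$ with data $(av+\eps w,0)$ as $u=u_0+\eps u_1+\eps^2 u_2+\cdots$, one gets $\square u_1+f_j'(u_0)u_1=0$, so $u_1=u_{1,1}+\cdots+u_{1,4}$ with $u_{1,i}$ conormal to $\Sigma_i$, its principal symbol transported from the $i$-th pulse by an ODE whose coefficient involves $f_j'(u_0)$.

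The heart of the matter, and the step I expect to be the main obstacle, is the microlocal description of the singularities generated by the interaction of these four conormal waves — in the spirit of Bony, Melrose--Ritter and the nonlinear-interaction analysis of Kurylev--Lassas--Uhlmann~\cite{KLU}, but carried out inside the energy-critical space $X(\mr;\mr^3)$ of \eqref{defX}, where there is no Sobolev regularity to spare and the whole construction must live at the threshold of the Strichartz estimate of Theorem~\ref{GV}. Expanding $f_j(u_0+\eps u_1+\cdots)$ and iterating, the twofold interactions produce no new hypersurface singularity in three space dimensions; the threefold interactions enter at order $\eps^3$ and source singularities on the light cones over the curves $\Sigma_i\cap\Sigma_j\cap\Sigma_k$, with symbols proportional to $f_j'''(u_0)$; and at order $\eps^4$ the fully mixed term $\frac1{24}f_j^{(4)}(u_0)\,u_{1,1}u_{1,2}u_{1,3}u_{1,4}$, which is conormal to $\bigcap_{i=1}^4\Sigma_i=\{q\}$, sources a conormal wave along the light cone $\La$ issuing from $q$ whose principal symbol on $\La$ equals $c\,f_j^{(4)}(u_0(q))\prod_{i=1}^4\sigma_i(q)$ for a universal constant $c\neq 0$ and the known symbols $\sigma_i(q)$ of the pulses at $q$. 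Three things have to be done to make this usable: establish the requisite paired-Lagrangian/conormal calculus for the quartic interaction; carry the new singularity from $\La$ out to null infinity and identify its symbol there with an explicit integral transform of the symbol on $\La$, so that it shows up in the outgoing radiation field furnished by $\mcm_j$; and check that the conormal-to-$\La$ singularity at order $\eps^4$ is not swamped — here the separation by powers of $\eps$ together with the nondegeneracy {\bf B.\ref{H4N}}, which guarantees $f_j^{(4)}(u_0(q))\neq 0$ as soon as $u_0(q)\neq 0$, do the job. Tracking the smooth but nontrivial background $u_0$ through every transport equation and iteration, at the critical regularity, is what makes this technically heavy.

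Granting all of that, the conclusion is short. If $\mcm_1=\mcm_2$, then for each such configuration the outgoing radiation fields agree, hence so do the principal symbols of the new singularities on $\La$, i.e. $c\,f_1^{(4)}\big(u_0^{(1)}(q)\big)\prod_i\sigma_i(q)=c\,f_2^{(4)}\big(u_0^{(2)}(q)\big)\prod_i\sigma_i(q)$, where $u_0^{(j)}$ is the background solution computed with $f_j$. Dividing by $c\prod_i\sigma_i(q)\neq 0$ and letting $t_q\to 0$ with $x_q$ fixed, the two background values $u_0^{(j)}(t_q,x_q)$ both tend to $a$, so by continuity of $f_1^{(4)}$ and $f_2^{(4)}$ one obtains $f_1^{(4)}(a)=f_2^{(4)}(a)$. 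Since $a\in\mr$ is arbitrary, $f_1^{(4)}\equiv f_2^{(4)}$, and integrating four times with the vanishing constants supplied by {\bf H.\ref{H4}} gives $f_1\equiv f_2$, as claimed.
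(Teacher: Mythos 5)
There is a genuine gap, and it sits exactly at the point where the paper has to work hardest. Your probing configuration prescribes the \emph{Cauchy data at $t=0$}, namely $(av+\eps w,0)$, the same for both equations, so that the background value $u_0^{(j)}(t_q,x_q)$ tends to the chosen number $a$ for $j=1,2$. But the hypothesis $\mcm_1=\mcm_2$ with $\mcm_j=\Omega_{j+}\circ\Omega_{j-}^{-1}$ only equates outgoing asymptotic data of two solutions that have the \emph{same incoming asymptotic data}; it says nothing about two solutions with the same data at $t=0$, because the individual wave operators $\Omega_{j\pm}$ (i.e.\ the passage from $t=0$ data to scattering data) are not known and differ with $f_j$. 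So in your final paragraph the step ``if $\mcm_1=\mcm_2$, then for each such configuration the outgoing radiation fields agree'' is unjustified: with equal $t=0$ data the incoming fields differ, and with equal incoming fields (the only situation where the hypothesis applies) the two backgrounds $u_0^{(1)}$ and $u_0^{(2)}$ are solutions of different equations with different Cauchy data, so there is no reason their values at the interaction point both tend to the same $a$. Your symbol computation then only yields $f_1^{(4)}(u_0^{(1)}(q))=f_2^{(4)}(u_0^{(2)}(q))$ along two \emph{a priori different} backgrounds, which is not yet $f_1^{(4)}(a)=f_2^{(4)}(a)$.

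This is precisely the difficulty the paper resolves, and it is why its argument needs more than the quartic interaction. The paper fixes a radial incoming radiation field $\Upsilon_0$, extracts from the interaction analysis \emph{both} identities $f_1^{(3)}(u_1)=f_2^{(3)}(u_2)$ and $f_1^{(4)}(u_1)=f_2^{(4)}(u_2)$ along the respective (smooth, radial) backgrounds, and then, at $t=0$, differentiates the third-derivative identity to get $f_1^{(4)}(\vphi_1)\vphi_1'=f_2^{(4)}(\vphi_2)\vphi_2'$; combined with the fourth-derivative identity and hypothesis {\bf B.\ref{H4N}} (which pins down the common zero set) this forces $\vphi_1=\vphi_2$, i.e.\ the backgrounds actually coincide, after which one can compare $f_1^{(4)}$ and $f_2^{(4)}$ at the same argument and conclude by varying $\Upsilon_0$ and integrating. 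The fact that your scheme never genuinely uses {\bf B.\ref{H4N}} (equality of the conormal symbols holds whether or not they vanish) and never uses the triple interaction is a symptom of the same problem: those ingredients are what the paper uses to bridge the mismatch between the two backgrounds, a mismatch your argument assumes away by conflating $t=0$ data with incoming scattering data. The microlocal core of your proposal (four transversal conormal pulses, the $\eps^4$ term $\frac{1}{4!}f^{(4)}(u_0)\,u_{1,1}u_{1,2}u_{1,3}u_{1,4}$ sourcing a wave on the light cone over $q$ whose symbol is proportional to $f^{(4)}(u_0(q))$, and its propagation to null infinity) is in the spirit of the paper's Propositions on triple and quadruple interactions, but as written the reduction to the theorem does not close.
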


The proof of Theorem \ref{main} relies on   the analysis of the  propagation of conormal singularities for solutions of semiliear wave equation. However, the M{\o}ller scattering operator defined in \eqref{Moe} is not quite suitable for the study of propagation of singularities, and as in \cite{BasSaB}  we will rephrase it in terms of Friedlander radiation fields.

\subsection{The Radiation Fields and the Scattering Operator}

  In the case  of the linear wave equation \eqref{lweq}, the forward and backward radiation fields  for the wave equation with a forcing term $f(t,x)\in C_0^\infty(\mr \times \mr^3),$ 

\begin{equation}
\begin{split}
\left(\p_t^2-\Delta\right) v(t,x) & =f (t,x),\\
 v(0,x)=\vphi(x), \;\ &  \p_t v(0,x)= \psi(x), \;\ \vphi,\psi\in C_0^\infty(\mr^3), \label{Weq-F}
\end{split}
\end{equation}
are defined to be  respectively
\begin{gather}
\begin{gathered}
\mcr_+(\vphi, \psi,f)(s, \theta) = \lim_{r\rightarrow \infty} r(\p_t v)(s+r, r\theta), \\
\mcr-(\vphi, \psi,f)(s, \theta) = \lim_{r\rightarrow \infty} r(\p_t v)(s-r, r\theta),
\end{gathered}\label{radf}
\end{gather}
where $r=|x|$ and $\theta=\frac{x}{|x|}.$

In general, when we are not referring to initial data or forcing term, we denote the forward and backward radiation fields of a  function $u(t,r,\omega),$  when they exist, by 
\begin{gather}
\begin{gathered}
\mcn_+ u(s,\omega)= \lim_{r\rightarrow \infty} r \p_s u(s+r,r,\omega) \text{ and } 
\mcn_- u(s,\omega)= \lim_{r\rightarrow \infty} r \p_s u(s-r,r,\omega).
\end{gathered}\label{radf-func}
\end{gather}

It is well-known, see for example \cite{BasSaB} for a proof, the limits \eqref{radf} can be computed in terms of the Radon transform of the initial data and the forcing term:
\beqq\label{linrad}
\begin{gathered}
\mcr_+(\vphi, \psi,f)(s, \theta) = -\frac{1}{4\pi} \p_s\bigg(R\psi(s, -\theta) + \p_s R\vphi(s, -\theta)+  \int_{t - \langle\theta, z\rangle = s}H(t) f(t,x) d\sigma(t, z)\ \bigg)\\
\mcr_-(\vphi, \psi,f)(s, \theta) = \frac{1}{4\pi} \p_s\bigg(R\psi(s,  \theta) + \p_s R\vphi(s,  \theta)- \int_{t + \langle\theta, z\rangle = s} H(-t) f(t,x) d\sigma(t, z) \bigg),
\end{gathered}
\eeqq
where $H(t)$ is the Heaviside function,  and $\sigma(t,x)$ is the corresponding surface measure and $R$ is the Radon transform:
\begin{equation*}
R g(s,\theta)= \int_{\lan x,\theta\ran=s} g(x) d \mu(x) \\
\text{ and } \mu(x) \text{ is the surface measure on the plane } \lan z,\theta\ran=s,
\end{equation*}
see for example \cite{LaxPhi} and \cite{Fried1}.  These maps have an extension as bounded operators 
\begin{equation}
\begin{split}
\mcr_\pm:  \;  \dot H^1(\mbr^3)\times  L^2(\mbr^3)& \times  L^1(\mr; L^2(\mr^3)) \rightarrow L^2(\mbr\times \mbs^2), \\
& (\vphi, \psi, f) \longmapsto \mcr_\pm(\vphi, \psi, f), 
\end{split}\label{bddnes}
\end{equation}
with the Lebesgue measure on $\mr\times \ms^2,$ and moreover,  the maps
\begin{equation}
\begin{split}
\mcr_\pm: \; & \dot H^1(\mbr^3)\times L^2(\mbr^3) \rightarrow L^2(\mbr\times \mbs^2), \\
& (\vphi, \psi) \longmapsto \mcr_\pm(\vphi, \psi,0), 
\end{split}\label{defradf}
\end{equation}
are unitary, in the sense that
\begin{gather}
E_0(\vphi,\psi)= ||\mcr_\pm (\vphi,\psi,0)||_{L^2(\mr \times \ms^2)}^2. \label{liniso}
\end{gather}

One also has the following inequalities which will be very useful below:
\begin{gather}
\begin{gathered}
||\mcr_+(\vphi,\psi,f)||_{L^2(\mr\times \ms^2)} \leq E_0(\vphi,\psi)+ ||f||_{L^1([0,\infty);L^2(\mr^3)}, \\
||\mcr_-(\vphi,\psi,f)||_{L^2(\mr\times \ms^2)} \leq E_0(\vphi,\psi)+ ||f||_{L^1([-\infty,0);L^2(\mr^3)},
\end{gathered} \label{est-radf}
\end{gather}
 see the proof of Theorem 2.1 of \cite{BasSaB}.

The Friedlander radiation fields can also be defined for the semilinear wave equation \eqref{Weq} with $f(u)$ satisfying 
{\bf H\ref{H1}}-{\bf H.\ref{H4}}.  This was shown by Grillakis \cite{Gri1} for initial data $\vphi,\psi \in C_0^\infty.$     Baskin and S\'a Barreto \cite{BasSaB}  showed that the maps
\begin{gather}
\begin{gathered}
\mcl_+(\vphi, \psi)(s, \theta) = \lim_{r\rightarrow \infty} r(\p_t u)(s+r, r\theta), \\
\mcl_-(\vphi, \psi)(s, \theta) = \lim_{r\rightarrow \infty} r(\p_t u)(s-r, r\theta),
\end{gathered}\label{rsladf}
\end{gather}
where $u(t,x)$ is the Shatah-Struwe solution of \eqref{Weq} with initial data $(\vphi,\psi)\in C_0^\infty(\mr^3) \times C_0^\infty(\mr^3),$ extend to  nonlinear isomorphisms
\begin{gather}
\begin{gathered}
\mcl_\pm: \dot H^1(\mbr^3)\times L^2(\mbr^3) \rightarrow L^2(\mbr\times \mbs^2),\\
\mcl_\pm(\phi,\psi)=\mcr_\pm( \phi,\psi,-f(u)),
\end{gathered}\label{nonl-Iso}
\end{gather}
but now with the semilinear energy norm
\begin{gather}
E(\vphi,\psi)= ||\mcl_\pm (\vphi,\psi)||_{L^2(\mr \times \ms^2)}^2. \label{iso}
\end{gather}

This relies on the key ingredient used to establish the existence and uniqueness of global solutions, which was proved by Bahouri and Shatah \cite{BahSha}:
\begin{gather*}
\lim_{t\rightarrow \infty} \int F(u(t,x))\ dx=0.
\end{gather*}

Therefore the Friedlander nonlinear scattering operator
\begin{gather}
\mca = \mcl_+\circ \mcl_-^{-1}: L^2(\mr \times \ms^2) \longmapsto L^2(\mr\times \ms^2) \label{frscat}
\end{gather}
is an isometry.  It is shown in \cite{BasSaB} that the Friedlander and the  M{\o}ller scattering operators are related by 
\begin{gather*}
\mca= \mcr_+ \circ \mcm \circ \mcr_-^{-1},
\end{gather*}
where $\mcr_\pm$ are defined in \eqref{defradf}.

Therefore Theorem \ref{main} is equivalent to
\begin{theorem}\label{main1} Let $f_j(u)$ $j=1,2$ satisfy hypotheses {\bf H.\ref{H1}} to  {\bf H.\ref{H4}} and  {\bf B.\ref{H4N}}.  Let $\mca_j$ be the Friedlander scattering operator defined in \eqref{frscat} associated with $f_j.$  If $\mca_1=\mca_2,$ then $f_1(u)=f_2(u)$ for all $u\in \mr.$
\end{theorem}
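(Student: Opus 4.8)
\emph{Approach.} The plan is to read $f$ off from $\mca$ by sending in carefully engineered incoming data and analysing the new singularity created in the interior by the nonlinear interaction of several conormal waves; this singularity propagates out to null infinity and so becomes visible in the outgoing radiation field $\mca(g)$. This is a scattering-theoretic version of the mechanism introduced by Kurylev, Lassas and Uhlmann \cite{KLU}. We use two features of {\bf H.\ref{H1}}--{\bf H.\ref{H4}}: the equation is globally well posed for arbitrary finite-energy data (Theorem \ref{ShaStru}) and the maps $\mcl_{\pm,j}$ are global isomorphisms, so we may probe with data of \emph{arbitrarily large} amplitude; and $|f_j^{(k)}(u)|\leq C_k|u|^{5-k}$ for $0\le k\le 4$ forces $f_j(0)=f_j'(0)=f_j''(0)=f_j^{(3)}(0)=0$, so it suffices to recover $f_1^{(4)}\equiv f_2^{(4)}$ and integrate four times.

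\emph{Step 1: reduction and choice of probing data.} Assume $\mca_1=\mca_2$. For an incoming radiation field $g$, let $u_j$ be the Shatah--Struwe solution of $\square u_j+f_j(u_j)=0$ with $\mcl_{-,j}(u_j)=g$; then $\mcl_{+,j}(u_j)=\mca_j(g)$ is independent of $j$, so $u_1$ and $u_2$ are globally defined solutions of the two equations with the \emph{same} incoming and \emph{same} outgoing radiation fields. Fix $b\in\mr$ and an interior point $p=(t_0,x_0)$, and take $g=g_b^{(\tau)}+\eps(g_1+g_2+g_3+g_4)$, where $g_b^{(\tau)}$ is the incoming radiation field of a wave that focuses, near $p$, into a smooth ``blob'' of height $\approx b$ and focal size $\tau>0$: since this wave is small in the causal past of the focal region, the solution with data $g_b^{(\tau)}$ lies, for \emph{both} $j$, within $O(\tau^2)$ of the constant $b$ in a neighbourhood of $p$, and converges to $b$ there as $\tau\to0$. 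The $g_i$ are incoming radiation fields of small conormal waves $v_i$ carried, near $p$, on four characteristic hyperplanes $H_1,\dots,H_4$ in general position with $H_1\cap H_2\cap H_3\cap H_4=\{p\}$ --- four characteristic hyperplanes meeting at a point being exactly the configuration a degree-five nonlinearity selects in the $(1+3)$-dimensional spacetime.

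\emph{Step 2: the new wave and its symbol.} By finite speed of propagation and propagation of conormal regularity for semilinear wave equations (Bony, Melrose--Ritter, Rauch--Reed, adapted to the energy-critical exponent $u^5$), near $p$ and inside the forward light cone $\mcc$ it issues, $u_j$ equals the blob plus $\eps\sum_i v_i$, plus terms carried on the light cones over the intersections $H_i\cap H_k$ and $H_i\cap H_k\cap H_l$, plus a new conormal wave carried on $\mcc$, plus a remainder of higher regularity. The source of that new wave is the part of $f_j(u_j)$ conormal to $p$; expanding $f_j(b+\eps\sum_i v_i)$ and keeping the contribution involving all four $v_i$, the leading conormal-to-$p$ source is $\eps^4 f_j^{(4)}(b)\,v_1v_2v_3v_4$, the remaining contributions being of lower order (higher powers of $\eps$, or terms of size $O(\tau^2)$ coming from the deviation of the blob from the constant $b$). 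Here {\bf B.\ref{H4N}} enters: since $f_j^{(4)}(b)\neq0$ for $b\neq0$, this source, and hence the new wave, does not degenerate. Computing the principal symbol of the new wave on $N^*\mcc$ and propagating it along the null bicharacteristic flow out to null infinity --- just as in the linear radiation-field calculus underlying \eqref{linrad}--\eqref{defradf} --- shows that $\mca_j(g)=\mcl_{+,j}(u_j)$ acquires a conormal singularity on the explicit smooth hypersurface $\{\,s=t_0-\langle\theta,x_0\rangle\,\}\subset\mr_s\times\ms^2_\theta$, with principal symbol $c\,f_j^{(4)}(b)$ times an explicit nonzero factor --- built from the symbols of the $v_i$, the $H_i$ and $p$, hence independent of $j$ in the limit --- plus errors that tend to $0$ as $\tau,\eps\to0$.

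\emph{Step 3: conclusion, and the main obstacle.} Since $\mca_1(g)=\mca_2(g)$ for every member of this family, the principal symbols of the two new singularities coincide; dividing by the common nonzero factor and letting $\tau,\eps\to0$ gives $f_1^{(4)}(b)=f_2^{(4)}(b)$ for every $b\in\mr$, the unrestricted range of $b$ being afforded by the global well-posedness. Using $f_j(0)=f_j'(0)=f_j''(0)=f_j^{(3)}(0)=0$ and integrating $f_1^{(4)}\equiv f_2^{(4)}$ four times yields $f_1\equiv f_2$ on $\mr$, which is Theorem \ref{main1}. The heart of the matter, and the main difficulty, is Step 2 at the energy-critical exponent: because $\square u+u^5=0$ is scaling-critical in $\dot{H}^1$ the nonlinearity has no regularity to spare, and finite-energy conormal waves carried on characteristic hyperplanes are only approximate solutions, so one must (i) construct the focusing-blob background with the required near-constant behaviour near $p$ from admissible scattering data; (ii) set up the correct class of multi-surface conormal (polyhomogeneous) distributions and show this structure is propagated through the nonlinear flow, using the Strichartz estimate of Theorem \ref{GV} and the energy identity \eqref{STES} to control the non-conormal remainders; (iii) isolate the genuinely new wave on $\mcc$ and compute its principal symbol despite the approximate nature of the building blocks; and (iv) carry that symbol all the way to null infinity so that it appears cleanly in $\mca_j(g)$ --- all while controlling the errors uniformly in the double limit $\tau,\eps\to0$.
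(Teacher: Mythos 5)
Your overall strategy---probe with a background plus four small conormal waves, isolate the quadruple-interaction wave, and read a multiple of $f_j^{(4)}$ off its principal symbol at null infinity, with Strichartz estimates controlling the non-conormal remainders---is the same KLU-type mechanism the paper uses (its Theorem \ref{main-step}, the expansion Theorem \ref{expansion1}, and Propositions \ref{reg-w1111-N}--\ref{reg-w1111}). The genuine divergence, and the gap, is in how you treat the background. In the paper the two backgrounds produced from the \emph{same} incoming radiation field by the two \emph{different} nonlinearities are a priori different solutions $u_1\neq u_2$, and the interaction analysis only yields $f_1^{(3)}(u_1(t,x))=f_2^{(3)}(u_2(t,x))$ and $f_1^{(4)}(u_1(t,x))=f_2^{(4)}(u_2(t,x))$; a separate argument (radial $\Upsilon_0$, the support/regularity Theorem \ref{sup-thm}, restriction to $t=0$, differentiation in $s$, and hypothesis {\bf B.\ref{H4N}}) is then needed to conclude $u_1=u_2$ and hence $f_1=f_2$ on the range of values. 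You propose to bypass all of this by making the background approximately the constant $b$ near the interaction point, the same for both $j$ as $\tau\to0$. But the assertion that ``the solution with data $g_b^{(\tau)}$ lies, for both $j$, within $O(\tau^2)$ of the constant $b$ near $p$'' is precisely where the two equations must be compared, and it is not proved: the incoming field determines Cauchy data only through the $j$-dependent map $\mcl_{j-}^{-1}$; near the focus the amplitude is $O(1)$, so the nonlinearity is not pointwise small there; and what the symbol computation needs is $u_{0,j}(p)\to b$ together with enough smooth control of $u_{0,j}$ near the interaction to run the conormal calculus, uniformly for arbitrarily large $b$. Smallness of the blob in the critical norm ($\sim b\,\tau^{1/2}$) gives closeness to the linear focusing profile in energy/Strichartz norms, but the upgrade to the pointwise/smooth statement you invoke is exactly the crux and is missing. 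Tellingly, if your Steps 1--2 were complete, the theorem would follow \emph{without} {\bf B.\ref{H4N}}, whereas the paper's proof needs it; that alone signals that the hard part has been assumed rather than proved.

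Secondary points. (i) Conormal waves carried on characteristic hyperplanes have infinite energy on $\mr^3$; one must use spherical (or otherwise localized) conormal waves as in \eqref{def-upsj}, arrange the four light cones to meet at the desired point, and check---as the paper does with the explicit choice \eqref{def-z1z2z3}---that the radiation patterns of the pairwise and triple interactions miss the points of the outgoing Lagrangian where the symbol is read. (ii) Extracting the coefficient of $\eps_1\eps_2\eps_3\eps_4$ from $\mca_j$ applied to the probing family, with an error small enough to preserve the conormal singularity and its symbol, requires a quantitative fourth-order expansion with $O(|\veps|^5)$ remainder in $L^2(\mr\times\ms^2)$; this is the content of Theorem \ref{expansion1} (Propositions \ref{sol123} and \ref{solw4}) and occupies a substantial part of the paper, while your item (ii) only gestures at it. (iii) The claim that the outgoing symbol factor is ``independent of $j$'' needs the observation that $f_j'(u_{0,j})$ enters the transport of the waves' principal symbols only at subprincipal order. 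None of (i)--(iii) is fatal, but together with the unproved background identification the proposal as written does not yet constitute a proof.
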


\subsection{A geometric interpretation of the radiation fields}  It is useful to give a geometric interpretation of the limits \eqref{rsladf} as the asymptotic behavior of the solution $u$ to either \eqref{Weq} or 
 \eqref{Weq-F}  on a compact manifold with corners.
  This is the point of view used by Baskin, Vasy and Wunsch \cite{BasVasWun1,BasVasWun} to analyze the asymptotic behavior of the radiations fields as $|s|\nearrow \infty$ on non-trapping asymptotically Minkowski manifolds.  First one uses stereographic projection to compactify $\mr\times \mr^3$  into the upper hemisphere of $\ms^4$ by setting
\begin{gather*}
\mcb: \mr \times \mr^3 \longrightarrow \ms^4_+\\
(t,x) \longmapsto \frac{1}{(1+|x|^2+t^2)^\ha}(1,t,x).
 \end{gather*}
 The sphere $\ms^3,$ which is the boundary of $\ms_+^4,$ corresponds to $\rho=(1+|x|^2+t^2)^{-\ha}=0.$   Perhaps it is more intuitive to think of
 \begin{gather*}
 (T,X)= \frac{1}{(1+|x|^2+t^2)^\ha}(t,x) \in \{z\in \mr^4: |z|<1\},
 \end{gather*}
 and think of $\ms_+^4$ as the interior of the unit ball in $\mr^4.$
 Notice that a light cone with vertex $(t_0,x_0),$ which is given by $ |t-t_0|^2=|x-x_0|^2$ will  always intersect the boundary at infinity at the manifolds
 \begin{gather*}
S_+=\{\rho=0, \;\   T=|X|\} \text{ and } S_-=\{\rho=0, \;\ T=-|X|\},
 \end{gather*}
independently of its vertex, see Fig.\ref{Fig0}.

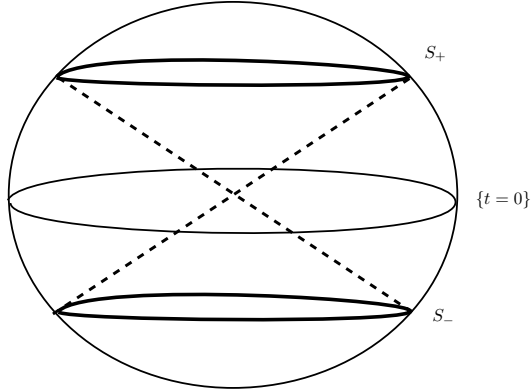
\begin{figure}[h!]
% Generated with LaTeXDraw 2.0.8
% Fri Feb 07 14:25:29 EST 2020
% \usepackage[usenames,dvipsnames]{pstricks}
% \usepackage{epsfig}
% \usepackage{pst-grad} % For gradients
% \usepackage{pst-plot} % For axes
\scalebox{.6} % Change this value to rescale the drawing.
{
\begin{pspicture}(0,-4.3)(11.721894,4.3)
\psellipse[linewidth=0.04,dimen=outer](4.99,0.0)(4.99,4.3)
\psbezier[linewidth=0.04](0.06,-0.18)(0.06,-0.98)(9.918998,-1.1599995)(9.92,-0.16)(9.921002,0.8399995)(0.3845453,0.8031061)(0.0,-0.12)
\psbezier[linewidth=0.08](1.08,2.6)(1.4,2.42)(8.42,2.34)(8.86,2.6)(9.3,2.86)(1.7898004,3.3244028)(1.08,2.62)
\psbezier[linewidth=0.08](1.12,-2.6)(1.44,-2.78)(8.46,-2.86)(8.9,-2.6)(9.34,-2.34)(1.8298005,-1.8755972)(1.12,-2.58)
\usefont{T1}{ptm}{m}{n}
\rput(9.471455,3.125){$S_+$}
\usefont{T1}{ptm}{m}{n}
\rput(9.651455,-2.715){$S_-$}
\psline[linewidth=0.066cm,linestyle=dashed,dash=0.16cm 0.16cm](8.9,2.58)(1.0,-2.64)
\psline[linewidth=0.066cm,linestyle=dashed,dash=0.16cm 0.16cm](1.12,2.6)(8.96,-2.6)
\usefont{T1}{ptm}{m}{n}
\rput(10.981455,-0.115){$\{t=0\}$}
\end{pspicture} 
}
\caption{A compactification of Minkowski space ${\mr\times \mr^3}.$ All light cones, independently of where their vertices are located, intersect the boundary at infinity along $S_\pm.$}
\label{Fig0}
\end{figure}

The next step is to blow-up the ball $B=\{z\in \mr^4: |z|\leq 1\}$ along the manifolds $S_\pm.$  We then consider the pull back the function $u,$ which solves \eqref{Weq} or \eqref{Weq-F}, to the manifold obtained by blowing  up $B$ along $S_\pm$ and analyze the limit of the pull-back of $u$ at the boundary faces introduced by this blow-up.  We denote these faces by $\mcf_+,$  and they are usually called the front face, see Fig.\ref{Fig1}.   In this region one can  can use projective coordinates
\begin{gather*}
s=\frac{T-X}{\rho},  T, \text{ and } R=\rho \text{ near } S_+, \text{ in the region where } |s|<\infty, \\
s=\frac{T+X}{\rho}, T, \text{ and } R=\rho \text{ near } S_-, \text{ in the region where } |s|<\infty.
\end{gather*}
Notice that the new boundary faces introduced by this blow-up is $R=0$ in both sets of coordinates.  But if one writes this in terms of the original variables $(t,x)$
one finds that
\begin{gather*}
s=t-x, \;\ R=(1+t^2+|x|^2)^{-\ha},  \text{ near } S_+, \text{ in the region where } |s|<\infty, \\
s=t+x, \;\ R=(1+t^2+|x|^2)^{-\ha},  \text{ near } S_-, \text{ in the region where } |s|<\infty.
\end{gather*}
But in the region where $|s|$ is finite, $R\rightarrow 0$ if $r=|x|\rightarrow \infty.$ Therefore, the limits \eqref{radf-func} can be thought of as the limit of the lift of the function $u$ to the blown-up manifold at the new boundary face introduced by the blow-up of $S_\pm.$ 

 Given an $L^2$ function on one of the new boundary faces $\mcf_+$ or $\mcf_-,$  there exists a unique  solution of \eqref{Weq} with such forward (if data is placed in $\mcf_+$) or backward (if data is placed on $\mcf_-$) radiation field.  The map that takes the data on $\mcf_-$ to the limit on $\mcf_+$ is the scattering operator. The inverse of the scattering operator, of course does the opposite.

\begin{figure}[h!]
% Generated with LaTeXDraw 2.0.8
% Fri Feb 07 13:59:16 EST 2020
% \usepackage[usenames,dvipsnames]{pstricks}
% \usepackage{epsfig}
% \usepackage{pst-grad} % For gradients
% \usepackage{pst-plot} % For axes
\scalebox{.6} % Change this value to rescale the drawing.
{
\begin{pspicture}(0,-4.128254)(11.561894,4.1282544)
\psbezier[linewidth=0.04](1.62,2.7465923)(1.8,3.686592)(3.7402346,4.10493)(4.72,4.106592)(5.699765,4.1082544)(7.86,3.9665923)(8.22,2.7865922)
\psline[linewidth=0.04cm](1.6,2.7665923)(0.04,1.1465921)
\psline[linewidth=0.04cm](8.24,2.7465923)(9.84,1.1665922)
\psline[linewidth=0.04cm](0.04,1.1465921)(0.0,-1.2134078)
\psline[linewidth=0.04cm](9.86,1.1465921)(9.86,-1.2534078)
\psline[linewidth=0.04cm](0.02,-1.2534078)(1.62,-2.8334079)
\psline[linewidth=0.04cm](9.84,-1.3134078)(8.28,-2.9334078)
\psbezier[linewidth=0.04](8.28,-2.9534078)(8.111803,-3.7714646)(6.0041723,-4.0785613)(5.0,-4.0934076)(3.9958274,-4.1082544)(1.9874172,-3.8766704)(1.6297125,-2.8534079)
\psbezier[linewidth=0.04](1.66,2.7465923)(1.38,2.446592)(7.78,2.2865922)(8.16,2.7865922)(8.54,3.2865922)(2.2910435,3.6256325)(1.64,2.8665922)
\psbezier[linewidth=0.04](1.6198823,-2.9293077)(2.08,-3.6534078)(8.240353,-3.4734077)(8.300177,-2.9973202)(8.36,-2.5212326)(1.9714543,-2.1134079)(1.58,-2.8953013)
\psbezier[linewidth=0.04](0.0597577,-1.223214)(0.04,-1.4134078)(9.88,-1.5334078)(9.86003,-1.2434142)(9.840061,-0.9534206)(0.66,-0.9134078)(0.02,-1.1626136)
\psbezier[linewidth=0.04](0.1,1.0265921)(0.0,0.9265922)(9.259,0.78659266)(9.840009,1.1265922)(10.421017,1.4665917)(0.34,1.4065922)(0.08,1.1465921)
\psbezier[linewidth=0.062](0.11960783,-0.0818509)(0.02,-0.4029961)(9.600956,-0.6534078)(9.841332,-0.0461681)(10.081709,0.56107163)(0.50711644,0.7865922)(0.059843134,-0.010485301)
\usefont{T1}{ptm}{m}{n}
\rput(9.541455,2.2115922){$\mcf_+$}
\usefont{T1}{ptm}{m}{n}
\rput(9.761456,-2.4884079){$\mcf_-$}
\usefont{T1}{ptm}{m}{n}
\rput(10.821455,-0.028407807){$\{t=0\}$}
\psline[linewidth=0.062cm,linestyle=dashed,dash=0.16cm 0.16cm](1.04,2.206592)(8.88,-2.2334077)
\psline[linewidth=0.062cm,linestyle=dashed,dash=0.16cm 0.16cm](0.98,-2.2734077)(8.7,2.2665923)
\psbezier[linewidth=0.062,linestyle=dashed,dash=0.16cm 0.16cm](1.12,2.1665921)(1.08,2.1265922)(8.16,1.8665922)(8.66,2.2865922)(9.16,2.706592)(1.6034849,3.098627)(1.08,2.2465923)
\psbezier[linewidth=0.062,linestyle=dashed,dash=0.16cm 0.16cm](1.02,-2.2334077)(0.68,-2.5334077)(8.4,-2.7334077)(8.94,-2.2134078)(9.48,-1.6934078)(1.6378766,-1.3632692)(1.0,-2.1334078)
\end{pspicture} 
}
\caption{The manifold with corners obtained by blowing up the boundary of the compactification of ${\mr\times \mr^3}$ along $S_\pm,$ and the asymptotic behavior of a cone in Minkowski space. Following Friedlander \cite{Fried1,Fried2}, this can be thought of as hourglass waves  starting at negative infinity, collapsing to a point, reemerging for $t>0$ and expanding back to infinity.}
\label{Fig1}
\end{figure}
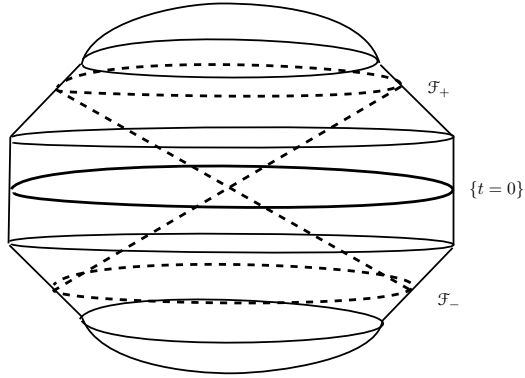

\subsection{Considerations about the proof of Theorem \ref{main1}}The  proof of Theorem \ref{main1} has two main ingredients: 
\begin{enumerate}[1.]
\item The linearization of the nonlinear scattering operator $\mca$ at a non-zero $C^\infty$ solution with a given radiation field, as in the work of Carles and Gallagher \cite{CarGal}.  The Strichartz estimates of Ginibre and Velo \eqref{STES} are used to control the error terms of the linear approximation
\item The analysis of the singularities formed by the interaction of three and four transversal progressing waves.    We show that the principal symbol of the radiation field  determine the nonlinearity on the manifold where the waves interact and this is enough to determine $f(u).$
\end{enumerate}

The idea of using nonlinear singularities to study inverse problems was first used by Kurylev, Lassas and Uhlmann \cite{KLU} and further developed by Lassas, Uhlmann and Wang \cite{LUW}, Uhlmann and Wang \cite{UhlWan} and  X.  Chen, M. Lassas, L. Oksanen and G. Paternain \cite{Paternain},  Feizmohammadi and Oksanen \cite{FeiOks}.

 The study of propagation of singularities  for semilinear wave equations started with the work of Bony \cite{Bon}.  The more specific question of propagation of conormal singularities for semilinear wave equations was studied by several authors, including  Beals \cite{Beals4}, Bony \cite{Bony3,Bony6,Bony1},  Chemin \cite{Che}, Melrose and Ritter \cite{MelRit},  Rauch and Reed \cite{RauRee}, S\'a Barreto \cite{SaB} and  more recently by S\'a Barreto and Wang \cite{SaWang1} and S\'a Barreto \cite{SaB2}.

We end this section by recalling the following support and regularity theorem for semilinear radiation fields, Theorem 1.2 of \cite{BasSaB}:
\begin{theorem}(Baskin and S\'a Barreto \cite{BasSaB}) \label{sup-thm} Let $F\in L^2(\mr)$ be compactly supported satisfy $\ds \int_\mr F(s) ds=0.$ If $(\vphi,\psi)\in \dot H^1(\mbr^3)\times  L^2(\mbr^3)$ are such that $\mcl_\pm(\vphi,\psi)=F,$ then $\vphi$ and $\psi$ are compactly supported and radial.  Moreover, if $F\in C_0^\infty(\mr),$  then $\vphi,\psi\in C_0^\infty(\mr^3).$ 
\end{theorem}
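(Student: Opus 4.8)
The plan is to establish the three assertions — radiality, compact support, and smoothness — in turn, in each case reducing to the corresponding fact for the free wave equation and then handling the nonlinearity perturbatively via the Strichartz estimate \eqref{STES}. Assume throughout that $\mcl_+(\vphi,\psi)=F$ (the case $\mcl_-$ is identical after reversing time). For radiality, note that \eqref{Weq} is rotation invariant and Shatah–Struwe solutions are unique (Theorem~\ref{ShaStru}), so for $g\in SO(3)$ the solution with data $(\vphi\circ g,\psi\circ g)$ is $u(t,g\,\cdot\,)$, whence $\mcl_+(\vphi\circ g,\psi\circ g)(s,\theta)=F(s,g^{-1}\theta)=F(s)$, $F$ being $\theta$-independent. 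Thus $\mcl_+(\vphi\circ g,\psi\circ g)=\mcl_+(\vphi,\psi)$, and since $\mcl_+$ is injective (it is an isomorphism, \eqref{nonl-Iso}) we get $(\vphi\circ g,\psi\circ g)=(\vphi,\psi)$ for every $g$; so $\vphi,\psi$ — and therefore $u$ — are radial.

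For the compact support statement, say $\operatorname{supp}F\subseteq[-\rho,\rho]$, write $\mcl_+(\vphi,\psi)=\mcr_+(\vphi,\psi,-f(u))$, and use the explicit representation \eqref{linrad}. Integrating once in $s$ — the integration constant drops because $\int_\mr F=0$ and because the Radon transforms and the nonlinear surface integral decay as $|s|\to\infty$ (the latter by $|f(u)|\lesssim|u|^5$ and finite energy) — yields, for every $\theta$,
\[
R\psi(s,\theta)+\p_s R\vphi(s,\theta)-\!\!\int_{t-\langle\theta,z\rangle=s}\!\!H(t)\,f(u)\,d\sigma(t,z)\ \text{ supported in }\ \{|s|\le\rho\}.
\]
When $f\equiv0$ this is the classical Lax–Phillips situation: splitting the first two summands by their parity under $(s,\theta)\mapsto(-s,-\theta)$ and applying Helgason's support theorem for the Radon transform gives $\operatorname{supp}\vphi,\operatorname{supp}\psi\subseteq\overline{B(0,\rho)}$. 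To absorb the nonlinear term I would invoke finite speed of propagation for \eqref{Weq}: if $r_0\in(0,\infty]$ is the support radius of $(\vphi,\psi)$ then $f(u)$ is supported in $\{|x|\le|t|+r_0\}$, so along the outgoing null rays $\{t-|x|=s\}$, $|s|>\rho$, the surface integral involves $u$ only there; passing to $w=|x|\,u$ (which solves a semilinear equation on $\{|x|\ge0\}$ with $w|_{|x|=0}=0$) and reading \eqref{linrad} along incoming and outgoing null rays expresses the outgoing, respectively incoming, part of the data at radius $\sigma>\rho$ as a null-ray integral of $|x|\,f(u)$. Letting $\mce(\sigma)$ denote the exterior energy of $(\vphi,\psi)$ on $\{|x|\ge\sigma\}$ — finite, nonincreasing in $\sigma$ (the potential term of \eqref{energy} being $\ge0$ by {\bf H.\ref{H3}}), and tending to $0$ as $\sigma\to\infty$ — one bounds these null-ray integrals of $f(u)\sim u^5$ by \eqref{STES} on the domain of influence of $\{|x|\ge\sigma\}$ and runs a continuity argument: for $\sigma$ large $\mce(\sigma)$ is below the Strichartz smallness threshold, and feeding this into the relations produces a self-improving inequality forcing $\mce(\sigma)=0$ for all $\sigma>\rho$, i.e. $(\vphi,\psi)$ is supported in $\overline{B(0,\rho)}$. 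This is where the real work lies: one must show that the \emph{exterior} (not the full) energy controls the null-ray integrals of the critical nonlinearity, and that the continuity argument actually closes; here finite speed of propagation, positivity of the potential, and criticality of \eqref{STES} all enter.

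For smoothness, assume $F\in C_0^\infty(\mr)$. The previous step gives $(\vphi,\psi)$ compactly supported; by Bahouri–Gérard's asymptotic completeness there is $(\vphi_0^+,\psi_0^+)=\Omega_+^{-1}(\vphi,\psi)$ with $\mcl_+(\vphi,\psi)=\mcr_+(\vphi_0^+,\psi_0^+,0)=F$, and the linear version of the support step (now retaining smoothness) yields $\vphi_0^+,\psi_0^+\in C_0^\infty(\mr^3)$, so the linear solution $v^+$ is $C^\infty$. It remains to propagate regularity of $u$ back from $t=+\infty$: since $u$ now has finite energy, compact spatial support for each $t$, and lies in $L^5_tL^{10}_x$, while $f\in C^\infty$ with the bounds {\bf H.\ref{H4}}, one differentiates \eqref{Weq}, writes $\p^\alpha f(u)$ as a sum of terms $f^{(k)}(u)\,\p^{\beta_1}u\cdots\p^{\beta_k}u$, and bootstraps with \eqref{STES} and fractional Leibniz/Sobolev inequalities to obtain $u\in C^0_tH^s\cap C^1_tH^{s-1}$ for every $s$; Sobolev embedding then gives $u\in C^\infty$, and restriction to $t=0$ gives $\vphi,\psi\in C_0^\infty(\mr^3)$. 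This last step, like radiality, is essentially standard; the compact-support step is the genuine obstacle.
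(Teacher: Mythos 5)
First, a framing remark: the paper does not prove this statement at all --- it is quoted as Theorem 1.2 of \cite{BasSaB} --- so there is no in-paper argument to compare with, and your proposal has to stand on its own. Your radiality step does: rotation invariance of \eqref{Weq}, uniqueness of Shatah--Struwe solutions (Theorem \ref{ShaStru}) and injectivity of $\mcl_\pm$ from \eqref{nonl-Iso} give $(\vphi\circ g,\psi\circ g)=(\vphi,\psi)$ for all $g\in SO(3)$, hence radial data. The smoothness step (persistence of higher regularity through the wave operator $\Omega_+$, plus the linear statement for $(\vphi_0^+,\psi_0^+)$) is believable though only sketched.

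The genuine gap is the compact-support step, and it is not a detail. (i) The hypothesis gives the support of only \emph{one} of the two radiation fields, say $\mcl_-(\vphi,\psi)=F$. Your null-ray scheme, as you describe it (``the outgoing, respectively incoming, part of the data at radius $\sigma>\rho$''), requires vanishing for $|s|>\rho$ of \emph{both} the incoming flux at past null infinity and the outgoing flux at future null infinity: the incoming ray through $(0,\sigma)$ carries the parameter $s=\sigma$ of $\mcl_-$, but the outgoing ray through $(0,\sigma)$ carries the parameter $s=-\sigma$ of $\mcl_+$, about which you know nothing. Deducing the support of $\mcl_+(\vphi,\psi)$ from that of $\mcl_-(\vphi,\psi)$ is essentially the theorem itself (or would require the scattering operator, which is not part of the hypothesis). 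So at each $\sigma>\rho$ you control at best one of the two null derivatives of $r u$ at $t=0$, which does not bound the exterior energy $\mce(\sigma)$. (ii) Even granting both representations, the crux --- that the null-ray integrals of $r f(u)\sim r u^5$ are controlled by $\mce(\sigma)$ with a constant uniform in $\sigma$, and that the resulting inequality is superlinear, say $\mce(\sigma)\le C\,\mce(\sigma)^{q}$ with $q>1$ at \emph{each fixed} $\sigma>\rho$, so that smallness forces vanishing --- is exactly what you do not supply, as you acknowledge; and the jump from ``$\mce(\sigma)$ is below the Strichartz threshold for $\sigma$ large'' to ``$\mce(\sigma)=0$ for every $\sigma>\rho$'' is not a free continuity argument, it is carried entirely by that missing estimate (a characteristic trace bound at the critical exponent, localized to the exterior of a cone, which \eqref{STES} does not give directly). (iii) Two smaller but real problems in the reduction: the source term in \eqref{linrad} is integrated over full characteristic hyperplanes $\{t-\langle\theta,z\rangle=s\}$, which for $|s|>\rho$ still sweep through the bulk region $|z|\lesssim|t|$, so nothing there is ``exterior''; this is only repaired after the radial reduction, which must therefore precede this step. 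And Helgason's support theorem does not apply to general $\dot H^1\times L^2$ data (it needs decay), so even your $f\equiv 0$ case, including the discarding of integration constants using $\int_\mr F\,ds=0$, needs an argument (parity, the moment condition, density together with the isometry \eqref{liniso}) that is only gestured at. In short: radiality and smoothness are fine in outline, but the central support assertion is not proved by the proposal.
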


\section{The Main Step in the Proof of Theorem \ref{main1}}\label{outline}
 We will combine properties of the scattering operator $\mca$ and propagation of conormal singularities for linear equations to prove the following:
\begin{theorem} \label{main-step} Let $f_j(u),$ $j=1,2,$ satisfy hypotheses {\bf H.\ref{H1}} to {\bf H.\ref{H4}} and {\bf B.\ref{H4N}}. Let $\mcl_{j\pm},$ $j=1,2,$ denote the corresponding radiation fields given by \eqref{rsladf}.  Given $\Upsilon_0\in C_0^\infty(\mr\times \ms^2),$ independent of $\omega,$ and such that $\int_{\mr} \Upsilon_0(s) ds=0,$ then according to Theorem \ref{sup-thm}, there exist  $\vphi_j,\psi_j\in C_0^\infty(\mr^3),$ $j=1,2$  such that
$\mcl_{j-} (\vphi_j,\psi_j)=\Upsilon_0,$ $j=1,2.$  Let $u_j$ be the solution to 
\begin{gather}
\begin{gathered}
\square u_j+ f_j(u_j)=0, \text{ on } \mr  \times \mr^3, \\
u_j(0,x)=\vphi_j(x), \;\ \p_t u_j(0,x)= \psi_j(x).
\end{gathered} \label{weqj}
\end{gather}
Let $\mca_j$ denote the scattering operator associated with $f_j.$  If $\mca_1=\mca_2,$  then the third and fourth derivatives of $f_1$ and $f_2$ satisfy 
\begin{gather}
f_1^{(3)}(u_1(t,x))=f_2^{(3)}(u_2(t,x)) \text{ and }  f_1^{(4)}(u_1(t,x))=f_2^{(4)}(u_2(t,x)), \label{eq-Der}
\end{gather}
 for all $(t,x)\in \mr\times \mr^3.$
\end{theorem}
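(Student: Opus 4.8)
The plan is to recover $f_j^{(3)}\circ u_j$ and $f_j^{(4)}\circ u_j$ as principal symbols of the new conormal singularities produced when, respectively, three or four transversal conormal waves are superimposed on the background solution $u_j$, and then to match these symbols using $\mca_1=\mca_2$. By Theorem \ref{sup-thm} and local regularity $u_j$ is smooth, and since $f_j\sim u^5$ annihilates the Taylor expansion of $f_j$ at $0$ through order $4$, the nonzero background is precisely what makes $f_j^{(3)}$ and $f_j^{(4)}$ visible in a finite-order interaction. Fix $q=(t_0,x_0)\in\mr\times\mr^3$ and $N\in\{3,4\}$. I would choose backward light cones $C_1,\dots,C_N$ having $q$ as an interior point and in general position near $q$, so that their common intersection near $q$ is a curve $\gamma$ through $q$ when $N=3$ and equals $\{q\}$ when $N=4$, together with perturbations $\Upsilon_1,\dots,\Upsilon_N\in L^2(\mr\times\ms^2)$ — for instance compactly supported Heaviside-type jumps across the traces of the $C_k$ at past null infinity — chosen so that the free waves they generate have nonvanishing principal symbols $a_1,\dots,a_N$ at $q$ along $C_1,\dots,C_N$.

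For small $\eps=(\eps_1,\dots,\eps_N)$, let $u_j^\eps$ be the Shatah--Struwe solution with past radiation field $\Upsilon_0+\sum_k\eps_k\Upsilon_k$, which exists by Theorems \ref{ShaStru} and \ref{sup-thm}, and expand it by Picard iteration,
\begin{gather*}
u_j^\eps=u_j+\sum_k\eps_k w_j^{(k)}+\sum_{k\le l}\eps_k\eps_l w_j^{(kl)}+\cdots,
\end{gather*}
where $w_j^{(k)}$ solves $\square w_j^{(k)}+f_j'(u_j)w_j^{(k)}=0$ with past radiation field $\Upsilon_k$, and each higher iterate $w_j^{(\beta)}$ solves $\square w_j^{(\beta)}+f_j'(u_j)w_j^{(\beta)}=S_j^{(\beta)}$ with vanishing past radiation field, $S_j^{(\beta)}$ being the multilinear combination of lower iterates with coefficients $f_j^{(m)}(u_j)$. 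The Strichartz estimate \eqref{STES}, used as in the error analysis of Carles and Gallagher \cite{CarGal}, makes this expansion legitimate in the energy/Strichartz topology with remainder $O(|\eps|^{N+1})$; passing to forward radiation fields, which depend continuously on the solution by \eqref{est-radf}, and comparing the $\eps_1\cdots\eps_N$ coefficients in $\mca_1(\Upsilon_0+\sum_k\eps_k\Upsilon_k)=\mca_2(\Upsilon_0+\sum_k\eps_k\Upsilon_k)$ gives $\mcn_+ w_1^{(1\cdots N)}=\mcn_+ w_2^{(1\cdots N)}$.

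The crux is the symbol computation. Since the $C_k$ and the $\Upsilon_k$ do not depend on $j$, and since the transport equation for the principal symbol of a conormal solution of $(\square+V)w=0$ along a characteristic hypersurface is independent of the lower-order potential $V$, the principal symbols of $w_1^{(k)}$ and $w_2^{(k)}$ on $C_k$ both equal $a_k$. For $N=3$, by the theory of interaction of conormal waves (Bony, Melrose--Ritter, and in the form needed here S\'a Barreto--Wang \cite{SaWang1} and S\'a Barreto \cite{SaB2}), the strictly most singular part of $S_j^{(123)}$ along $\gamma$ is $f_j^{(3)}(u_j)\,w_j^{(1)}w_j^{(2)}w_j^{(3)}$, the remaining contributions $f_j''(u_j)w_j^{(kl)}w_j^{(m)}$ being less singular there because the two-wave iterates $w_j^{(kl)}$ stay conormal to $C_k\cup C_l$ and are smoother than their sources; hence $w_j^{(123)}$ carries a new conormal singularity on the characteristic hypersurface $\mcq$ flowing out of $\gamma$ whose principal symbol equals a universal nonzero factor — depending only on the free parametrix and the radiation-field trace at $\mcf_+$, hence independent of $j$ — times $f_j^{(3)}(u_j)|_\gamma\cdot a_1a_2a_3$. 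Matching $\mcn_+ w_1^{(123)}=\mcn_+ w_2^{(123)}$ along the shadow of $\mcq$ forces $f_1^{(3)}(u_1)=f_2^{(3)}(u_2)$ on $\gamma$. The same argument with $N=4$, applied at $q$ where the leading source is $f_j^{(4)}(u_j)\,w_j^{(1)}w_j^{(2)}w_j^{(3)}w_j^{(4)}$ and the new singularity lies on the light cone with vertex $q$, gives $f_1^{(4)}(u_1(q))=f_2^{(4)}(u_2(q))$. Letting $q$ range over $\mr\times\mr^3$ and $(C_1,C_2,C_3)$ vary so that $\gamma$ sweeps out spacetime yields \eqref{eq-Der}.

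I expect the main obstacle to be exactly this interaction-symbol computation: one must show that the leading singularity of the third- (resp.\ fourth-) order Picard iterate along the newly created characteristic surface is governed by $f_j^{(3)}(u_j)$ (resp.\ $f_j^{(4)}(u_j)$) with a coefficient that is \emph{the same} for the two nonlinearities — which forces one to confine every $j$-dependent contribution, notably the subprincipal effect of $f_j'(u_j)$ in the transport equations and the parametrix, to strictly lower order — and \emph{nonzero}, so that the $\Upsilon_k$ can be arranged to make $a_1\cdots a_N$ non-vanishing at the interaction locus and so that the new singularity genuinely reaches $\mcf_+$. A secondary technical point is to place the singular perturbations $\Upsilon_k$ inside $L^2(\mr\times\ms^2)$, so that Theorems \ref{ShaStru} and \ref{sup-thm} and the isometry \eqref{iso} apply, while keeping them conormal of high enough order that the $N$-fold interaction leaves a detectable trace in the forward radiation field.
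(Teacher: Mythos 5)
Your proposal is correct and follows essentially the same route as the paper: a multi-parameter linearization of the scattering operator about the smooth background with the remainder controlled by the Ginibre--Velo Strichartz estimates \`a la Carles--Gallagher, perturbations chosen as conormal waves whose triple (resp.\ quadruple) interaction produces a new conormal singularity on the flowout of $\Gamma_\pm$ (resp.\ the light cone over the point of quadruple intersection) with principal symbol proportional to $f_j^{(3)}(u_j)$ (resp.\ $f_j^{(4)}(u_j)$) via the Lassas--Uhlmann--Wang/Melrose--Ritter interaction calculus, then restriction of the symbol to null infinity (the radiation field as an FIO) and matching under $\mca_1=\mca_2$, followed by varying the cones to sweep out all of $\mr\times\mr^3$. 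The only differences are cosmetic: the paper works with explicit spherical waves $(s-s_j-\lan\omega,z_j\ran)_+^m$ and computes the interaction loci and radiation patterns in closed form, where you invoke general position.
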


The proof of this  result will take the bulk of the paper, but once it is established, we can prove Theorem \ref{main1}.

\subsection{Proof of Theorem \ref{main1}}  We will prove Theorem \ref{main1} as a consequence of Theorem \ref{main-step}.  So let us assume we have proved \eqref{eq-Der}.

\begin{proof} Let $\Upsilon_0\in C_0^\infty(\mr),$ be independent of $\omega,$  and furthermore assume that $\ds \int_\mr \Upsilon_0(s) ds=0.$ Let  $u_j(t,x)$ satisfy \eqref{weqj} with initial data $\vphi_j,\psi_j\in C^\infty(\mr^3)$ such that $\mcl_{+j}(\vphi_j,\psi_j)(s,\omega)=\Upsilon_0(s),$ $j=1,2.$   Notice that a function  $\Upsilon_0\in C_0^\infty(\mr^3)$ is such that $\ds \int_\mr \Upsilon_0(s) ds=0$ if and only if $\Upsilon_0= G'(s),$ where $G\in C_0^\infty(\mr)$  and the space of such $\Upsilon_0$ dense in $L^2(\mr).$ 

We know from  Theorem \ref{sup-thm} that the corresponding initial data $\vphi_j$ and $\psi_j$  in \eqref{weqj} are radial and $\vphi_j, \psi_j\in C_0^\infty(\mr^3),$   and from Theorem \ref{ShaStru} and the result of Struwe \cite{Str}, since the initial data is radial,  $u_j\in C^\infty(\mr\times \mr^3).$   Theorem \ref{main-step} implies that 
\begin{gather*}
\begin{gathered}
f_1^{(3)}(u_1(t,x))= f_2^{(3)}(u_2(t,x)), \text{ and }  f_1^{(4)}(u_1(t,x))= f_2^{(4)}(u_2(t,x)), \; (t,x) \in \mr\times \mr^3.
\end{gathered}%\label{f13f23}
\end{gather*}
and in particular, for $t=0,$
\begin{gather}
\begin{gathered}
f_1^{(3)}(\vphi_1(s))= f_2^{(3)}(\vphi_2(s)), \text{ and }  f_1^{(4)}(\vphi_1(s))= f_2^{(4)}(\vphi_2(s)), \text{ for all } s \in \mr.
\end{gathered}\label{f13f23}
\end{gather}

By differentiating the first equation in \eqref{f13f23} with respect to $(t,x)$ we obtain
\begin{gather}
f_1^{(4)}(\vphi_1(s)) \vphi_1'(s)= f_2^{(4)}(\vphi_2(s)) \vphi_2'(s), \text{ for all } s \in \mr,\label{f14f24}
\end{gather}
 and we conclude that 
 \begin{gather*}
 \vphi_1'(s)= \vphi_2'(s) \text{ for all  }  s \text{ such that }    f_1^{(4)}(\vphi_1(s))= f_2^{(4)}(\vphi_2(s))\not=0,
 \end{gather*}
We know from assumption {\bf B.\ref{H4N}} that  $f_1^{(4)}(\vphi_1(s))= f_2^{(4)}(\vphi_2(s))=0$ if and only if $\vphi_1(s)=\vphi_2(s)=0,$
and so we conclude that
 \begin{gather*}
 \vphi_1'(s)= \vphi_2'(s) \text{ for all  }  s \in \mco=\{s\in \mr: \vphi_1(s)\not=0\}\cap \{s\in \mr:  \vphi_2(s)\not=0\}.
 \end{gather*}
 The set $\mco$ is open and since $\vphi_1$ and $\vphi_2$ are compactly supported, 
 \begin{gather*}
 \mco= \bigcup_{j-1}^k I_j, \; I_j=(a_j,b_j),
 \end{gather*}
 and therefore
 \begin{gather*}
 \vphi_1(s)-\vphi_2(s)= c_j \text{ on } I_j, 1\leq j \leq k.
 \end{gather*}
 But since  $\vphi_1, \vphi_2\in C^\infty$ and $\vphi_1(a_j)=\vphi_2(a_j),$ it follows that $c_j=0,$ $1\leq j\leq k,$  and therefore $\vphi_1=\vphi_2=\vphi$ and
 $f_1(\vphi(s))=f_2(\vphi(s)).$

 As discussed above, the set of $\Ups_0$ is dense in $L^2(\mr)$ and so,  by continuity of the radiation fields,
 this holds for all $\Ups_0\in L^2(\mr)$ and therefore $f_1(\vphi)=f_2(\vphi)$ for all $\vphi\in C_0^\infty(\mr^3)$ and radial.
Therefore  $f_1(s)=f_2(s)$ for all $s\in \mr.$ 
 \end{proof}

\subsection{ The steps of the proof of Theorem \ref{main1}}    For $\Ups\in L^2(\mr\times \ms^2),$ we know from the work of Bahouri and G\'erard \cite{BahGer} and that there exists $u\in X(\mr;\mr^3)$ and $\vphi\in  {\dot{H}}^1(\mr^3)$ and $\psi\in L^2(\mr^3)$ such that
\begin{gather}
\begin{gathered}
\square u +f(u)=0, \\
u(0,x)=\vphi(x), \;\ \p_t u(0,x)=\psi(x), \\
\text{ such that } \mcl_-(\vphi,\psi)=\Ups.
\end{gathered}\label{u-rad}
\end{gather}

We let $\Ups_k\in L^2(\mr\times \ms^2),$ $k=0,1,2,3,4,$ and  $\eps_j>0,$ $j=1,2,3,4$   and let $\Ups$ be of the form 
\begin{gather*}
\Ups=\Ups_0+ \Ups_\veps, \text{ where } \veps=(\eps_1,\eps_2,\eps_3,\eps_4) \text{ and } \Ups_\veps=\sum_{k=1}^4\eps_j \Ups_k.
\end{gather*}
The scattering operator acting on $\Ups$ is given by
\begin{gather*}
\mcn_+ u= \mcl_+(\vphi,\psi)=\mca(\Ups_0+\eps_1 \Ups_1+\eps_2 \Ups_2+\eps_3\Ups_3+\eps_4 \Ups_4).
\end{gather*}

The proof of Theorem \ref{main-step} consists of three parts:
\begin{enumerate}[Step 1.]
\item We establish an asymptotic expansion of the scattering operator of the following form
\begin{gather*}
\begin{gathered}
\mca(\Ups_0+\eps_1 \Ups_1+\eps_2 \Ups_2+\eps_3\Ups_3+\eps_4 \Ups_4)= \\  \mca(\Ups_0)+\sum_{|\alpha|\leq 4} \vec{\eps}\,{}^\alpha \Xi_{\alpha} +O_{L^2(\mr\times \ms^2)}(|\vec{\eps}|^5), \;\
\Xi_{\alpha}\in L^2(\mr\times \ms^2).
\end{gathered}
\end{gather*} 
\item  We pick $\Ups_0\in C_0^\infty(\mr),$  as in the statement of Theorem \ref{main1}. We choose $\Ups_k,$ $k=1,2,3,4$  to be the radiation fields of suitably chosen conormal spherical waves.
\item Let $\vphi_0,\psi_0 \in C_0^\infty(\mr^3)$ be such that $\mcl(\vphi,\psi)=\Ups_0.$  Let $u_0$ be the solution to
\begin{gather}
\begin{gathered}
\square u_0+ f(u_0)=0, \\
u_0(0)=\vphi_0, \p_t u_0(0)=\psi_0, \\
\text{ and } \mcl_-(\vphi_0,\psi_0)=\Ups_0.
\end{gathered}\label{rad-u0}
\end{gather}
\item We compare the singularities of the terms $\Xi_{\alpha},$ $|\alpha|=3,$  with the singularities $\Ups_k,$ $k=1,2,3,$ and in this case we take $\Ups_4=0$.   We show that  $\Xi_{\alpha}$ contains additional conormal singularities and by computing the principal symbol of those, and by varying $\Ups_k,$ $k=1,2,3,$  that we can determine  $f^{(3)}(u_0(t,x))$  for all $(t,x)\in \mr \times \mr^3.$
\item We repeat the procedure for  from the singularities of $\Xi_\alpha$ with $|\alpha|=4$ and show that we can determine  $f^{(4)}(u_0(t,x))$  for all $(t,x)\in \mr \times \mr^3.$
 \item In particular this shows that if $f_1$ and $f_2$ have the same scattering operator,  and $u_{10}$ and $u_{20}$ are the corresponding solutions to \eqref{rad-u0}, then 
 \begin{gather*}
 f_1^{(3)}(u_{10}(t,x))= f_2^{(3)}(u_{20}(t,x)) \text{ and }  f_1^{(4)}(u_{10}(t,x))= f_2^{(4)}(u_{20}(t,x)) \text{ for all } (t,x)\in \mr\times \mr^3.
 \end{gather*}
\end{enumerate}

\section{The Linearization and Asymptotic Expansion}
As we described above, in the first step we proceed \`a la Carles and Gallagher \cite{CarGal}, linearize the equation about a $C^\infty$ solution $u_0(t,x)$ and analyze the corresponding asymptotic expansion.    
  While a complete expansion was established in \cite{CarGal}, using the fact that $f(u)$ is real analytic, here we establish an expansion up to order five.

Let $u$ satisfy $\square u=-f(u)$ and we write $u=u_0+(u-u_0),$ and so
\begin{gather}
\begin{gathered}
\square u= \square u_0+ \square(u-u_0)= -f(u_0+(u-u_0))=   -f(u_0)- f'(u_0) (u-u_0)- \\ \frac{1}{2!} f^{(2)}(u_0)(u-u_0)^2-
\frac{1}{3!} f^{(3)}(u_0)(u-u_0)^3- \frac{1}{4!} f^{(4)}(u_0)(u-u_0)^4- G(u,u_0) (u-u_0)^5,  \\ \text{ where }
G(u,u_0)= \frac{1}{4!}\int_0^1 f^{(5)} ((1-t) u_0 +t u) (1-t)^4 dt.
\end{gathered}\label{def-G}
\end{gather}
Next we write $y=(t,x)$ and 
\begin{gather}
\begin{gathered}
u-u_0= w_{\veps}+ \zed(y,\veps), \text{ where } 
w_{\veps}=\sum_{|\alpha|\leq 4} \veps\,{}^\alpha w_\alpha, 
\end{gathered}\label{defw-exp}
\end{gather}
We  substitute this expression into \eqref{def-G}, match powers of $\veps$ up to order four, and obtain bounds for the remainder. The term independent of $\veps,$  $u_0,$ satisfies the following equation:
\begin{gather}
\begin{gathered}
\square u_0=-f(u_0), \\
\mcn_- u_0= \Upsilon_0,
\end{gathered}\label{eq-u0}
\end{gather}
where $\mcn_\pm$ were defined in \eqref{radf-func}.  We use this notation here because the Cauchy data of $u_0$ has not been specified.  We know there exists a unique $u_0\in X(\mr;\mr^3)$ satisfying \eqref{eq-u0} because the maps $\mcl_\pm$ defined in \eqref{nonl-Iso} are isometries.

The terms in $\veps\,{}^\alpha$ with $|\alpha|=1$ are denoted by 
\begin{gather}
w_1=w_{1,0,0,0}, \;\ w_2=w_{0,1,0,0}, \;\ w_3=w_{0,0,1,0} \text{ and } w_4=w_{0,0,0,1}, \label{convw}
\end{gather}
 and satisfy
\begin{equation}
\begin{aligned}
& \square w_j= -f'(u_0) w_j, \\
& \mcn_- w_j= \Upsilon_j, \;\ j=1,2,3,4.
 \end{aligned}\label{defwj}
 \end{equation}
 We shall prove that given $u_0\in X(\mr,\mr^3)$ there exists a unique $w_j\in X(\mr;\mr^3)$ satisfying \eqref{defwj}.
 
 To compute the terms in $\vepa$  with $|\alpha|=2,$  we write $\alpha=\beta_1+\beta_2,$ with  $|\beta_1|=|\beta_2|=1,$ and we have
 \begin{gather}
\begin{gathered}
\square w_{\alpha}  = -f'(u_0) w_{\alpha}- \frac{1 }{2!} f^{(2)}(u_0)\sum_{\alpha=\beta_1+\beta_2, |\beta_1|=|\beta_2|=1}  w_{\beta_1} w_{\beta_2}, \\\
 \text{ and } \mcn_- w_{\alpha}=0.
\end{gathered}\label{defwjk}
\end{gather}
We shall prove that, given $u_0, w_{\beta_1}, w_{\beta_2}\in X(\mr;\mr^3),$ $|\beta_1|=|\beta_2|=1,$ there exists a unique $w_\alpha \in X(\mr;\mr^3)$ satisfying \eqref{defwjk}.

To compute the terms of order three, we write
\begin{gather*}
\alpha=\beta_1+\beta_2,  \;\ |\beta_1|=1, \ |\beta_2|=2 \text{ and }
\alpha=\gamma_1+\gamma_2+\gamma_3, \;  |\gamma_j|=1, \; j=1,2,3.
\end{gather*}
 and we find
\begin{gather}
\begin{gathered}
\square w_{\alpha}= -f'(u_0) w_{\alpha}-\frac{1 }{2!}  f^{(2)}(u_0)\sum_{\alpha=\beta_1+\beta_2,|\beta_1|=1, |\beta_2=2}  w_{\beta_1} w_{\beta_2} -\frac{ 1 }{3!} f^{(3)}(u_0) \sum_{\alpha=\gamma_1+\gamma_2+\gamma_3, |\gamma_j|=1}w_{\gamma_1}w_{\gamma_2}w_{\gamma_3}, \\
 \mcn_- w_{\alpha}=0.
\end{gathered}\label{defw123}
\end{gather}
As in the first two cases, we shall prove that, given $u_0, w_{\beta_1}, w_{\beta_2}, w_\alpha\in X(\mr;\mr^3),$ with $|\beta_1|=|\beta_2|=1,$ and $|\alpha|=2,$  there exists a unique $w_\alpha \in X(\mr;\mr^3)$ satisfying \eqref{defw123}.

We split the terms with $|\alpha|=4$ into
\begin{gather*}
\alpha=\beta_1+\beta_2, \;\ |\beta_1|=2, \ |\beta_2|=2  \text{ or } |\beta_1|=1, |\beta_2|=3, \\
\alpha=\gamma_1+\gamma_2+\gamma_3, \;  |\gamma_j|=1, \; j=1,2, \; |\gamma_3|=2, \\
\alpha= \zeta_1+\zeta_2+\zeta_3+\zeta_4, \;\ |\zeta_j|=1, j=1,2,3,4,
\end{gather*}
and write
\begin{gather}
\begin{gathered}
\square w_{\alpha}= -f'(u_0) w_{\alpha}- \frac{ 1 }{2!}  f^{(2)}(u_0)\sum_{|\beta_1|=1, |\beta_2|=3} w_{\beta_1} w_{\beta_2}
-\frac{1}{2!}  f^{(2)}(u_0)\sum_{|\beta_1|=2, |\beta_2|=2} w_{\beta_1} w_{\beta_2}+ \\
\frac{1}{3!}  f^{(3)}(u_0)\sum_{|\gamma_1|=1, |\gamma_2|=1, |\gamma_3|=2} w_{\ga_1} w_{\ga_2}w_{\ga_3}+ 
\frac{1}{4!}  f^{(4)}(u_0)\sum_{|\zeta_j|=1, j=1,2,3,4} w_{\zeta_1}w_{\zeta_2}w_{\zeta_3}w_{\zeta_4}, \\
 \mcn_- w_{\alpha}=0.
\end{gathered}\label{defw1234}
\end{gather}

Again, we shall prove that, given $u_0, w_{\alpha}\in X(\mr;\mr^3),$  $|\alpha|=1,2,3,$  there exists a unique $w_\alpha \in X(\mr;\mr^3)$ with $|\alpha|=4$ satisfying \eqref{defw1234}.

We then estimate the remainder $\zed(y,\veps)$ given by \eqref{defw-exp}. We define
\begin{gather*}
 w_{\veps}= w_{1,\veps}+w_{2,\veps}+w_{3,\veps}+w_{4,\veps}, \text{ with } w_{j,\veps} \text{ homogeneous of degree j  in } \veps.
 \end{gather*}
 Using this notation we find that $\zed(y,\veps)$ satisfies
\begin{gather}
\begin{gathered}
\square \zed = -V_1 \zed- V_2 \zed^2- V_3 \zed^3- V_4 \zed^4- V_4 \zed^5- R, \\ \mcn_- \zed=0, \\ \text{ where } \\
V_1= f'(u_0)+  f''(u_0) w_{\veps}+ \frac{1}{2} f^{(3)}(u_0) w_{\veps}^2+ \frac{1}{3!} w_{\veps}^3 + 5 Gw_{\veps}^4, \\
V_2= \ha f''(u_0)+ \frac{1}{2} f^{(3)}(u_0) w_{\veps}+ \frac{1}{4} f^{(4)}(u_0)w_{\veps}^2+ 10 G w_{\veps}^3, \\
V_3= \frac{1}{3!} f^{(3)}(u_0)+ \frac{1}{3!} f^{(4)}(u_0) w_{\veps}+ 10 G w_{\veps}^2, \\
V_4= \frac{1}{4!} f^{(4)}(u_0) + 5 G w_{\veps}, \\
V_5= G, \\
R= \frac{1}{2!}f^{(2)}(u_0) R_1 +\frac{1}{3!}f^{(3)}(u_0) R_2 +\frac{1}{4!}f^{(4)}(u_0)R_3+ R_4, \\
R_1=  w_{4,\veps}^2+2 w_{4,\veps} A +w_{3,\veps}^2+2 w_{2,\veps}w_{3,\veps}, \;\ A= w_{1,\veps}+w_{2,\veps}+w_{3,\veps}, \\
R_2= 3 w_{1,\veps} \, q^2  + 3 w_{1,\veps}^2(w_{3,\veps}+w_{4,\veps})+ q^3, \;\ q=w_{2,\veps}+w_{3,\veps}+w_{4,\veps}\\
R_3= 4 w_{1,\veps}\, q^3 + 6w_{1,\veps}^2 \, q^2
+ 4 w_{1,\veps}^3\, q+ q^4, \\
R_4=  G w_{\veps}^5.
\end{gathered}\label{eqw4}
\end{gather}

The following result establishes the existsnce of $w_\alpha,$ $|\alpha|\leq 4$ and an estimate for the remainder $\mcz(y,\veps):$
\begin{theorem}\label{expansion1} Let $\Upsilon_j \in L^2(\mr\times \ms^2),$ $j=0,1,2,3,4,$  and let $u$  be the unique Shatah-Struwe solution of
\begin{gather}
\begin{gathered}
\square u+ f(u)=0, \\
\mcn_- u= \Upsilon_0+\eps_1 \Upsilon_1+\eps_2\Ups_2+\eps_3\Ups_3+\eps_4\Ups_4
\end{gathered}\label{u-ups}
\end{gather}
and let $u_0$ be the unique Shatah-Struwe solutions of \eqref{eq-u0}. Then there exist unique $w_\alpha\in X(\mr; \mr^3),$ $|\alpha|\leq 4,$  which satisfy  equations \eqref{defwj} to \eqref{defw1234}.
Let $w_{\veps}$ be defined by \eqref{defw-exp}, corresponding to these $w_\alpha,$ and let 
\begin{gather}
\mcz(y,\veps)=u(y)-\left( u_0(y)+ w_{\veps}(y) \right). \label{mce-def}
\end{gather}
Then there exists $\eps_0>0$ and $C>0$ such that for $\eps_j<\eps_0,$ $j=1,2,3,4$ 
\begin{gather}
\begin{gathered}
||\mcz||_{L^5((-\infty, T);L^{10}(\mr^3))}\leq C |\veps|^5 \text{ and }  ||\square \mcz||_{L^1((-\infty, T);L^{2}(\mr^3))}\leq C |\veps|^5, \text{ for every } T\in \mr.
\end{gathered}\label{mcet0}
\end{gather}

Moreover, for $|\vec{\eps}|<\eps_0,$
\begin{gather}
\begin{gathered}
\mcn_+(u_0+ w_{\veps}(t,x))- \mca(\Upsilon_0+\sum_{j=1}^4 \eps_j \Upsilon_j)= O_{L^2(\mr\times \ms^2)}(|\veps|^5).
\end{gathered}\label{radf-approx}
\end{gather}
\end{theorem}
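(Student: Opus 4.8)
The plan is to construct the $w_\alpha$ one $\veps$-degree at a time via the Strichartz estimate \eqref{STES}, exactly as in the Shatah--Struwe iteration, and then to convert the space-time control of $\mcz$ into the $L^2(\mr\times\ms^2)$ estimate \eqref{radf-approx} using \eqref{STES0-1} and \eqref{est-radf}. The first ingredient is a linear solvability statement: given $u_0\in X(\mr;\mr^3)$, $g\in L^1(\mr;L^2(\mr^3))$ and $\Upsilon\in L^2(\mr\times\ms^2)$, the problem $\square w+f'(u_0)w=g$, $\mcn_- w=\Upsilon$, has a unique solution $w\in X(\mr;\mr^3)$. The one nonlinear input is the H\"older bound $\|f'(u_0)w\|_{L^1(I;L^2)}\leqs\|u_0\|_{L^5(I;L^{10})}^{4}\,\|w\|_{L^5(I;L^{10})}$, valid since $|f'(u_0)|\leq C|u_0|^4$ by {\bf H.\ref{H4}} and $\tfrac12=\tfrac{4}{10}+\tfrac{1}{10}$, $1=\tfrac45+\tfrac15$. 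Since $u_0\in L^5(\mr;L^{10})$, one partitions $\mr$ into finitely many intervals on which $\|u_0\|_{L^5(I;L^{10})}$ is as small as desired; on each of them \eqref{STES} makes the Duhamel map $w\mapsto(\text{free part})+\square^{-1}(g-f'(u_0)w)$ a contraction on $L^5(I;L^{10})$, and one propagates the Cauchy data across interval endpoints, starting on the leftmost interval from the free solution with backward radiation field $\Upsilon$ (which exists in $X(\mr;\mr^3)$ by unitarity of $\mcr_-$ and \eqref{STES}). Uniqueness follows by applying the same contraction to the difference of two solutions.

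Granting this, I would solve \eqref{defwj}--\eqref{defw1234} in increasing order of $|\alpha|$. At stage $|\alpha|=m$ the right-hand side is $-f'(u_0)w_\alpha$ plus a fixed finite linear combination of products $f^{(k)}(u_0)\,w_{\beta_1}\cdots w_{\beta_k}$ with $2\leq k\leq 4$ and all $w_{\beta_i}$ of lower $\veps$-degree, already built. Using $|f^{(k)}(u_0)|\leq C|u_0|^{5-k}$ and the balanced exponents $\tfrac{5-k}{10}+\tfrac{k}{10}=\tfrac12$ in $x$ and $\tfrac{5-k}{5}+\tfrac{k}{5}=1$ in $t$, each such product lies in $L^1(\mr;L^2)$ with norm at most $\|u_0\|_{L^5 L^{10}}^{5-k}\prod_i\|w_{\beta_i}\|_{L^5 L^{10}}$, so the linear statement produces the unique $w_\alpha\in X(\mr;\mr^3)$. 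The quintic terms occurring later in \eqref{eqw4} are handled the same way, using that $f^{(5)}$, hence $G$ in \eqref{def-G}, is bounded.

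For the remainder, substituting \eqref{defw-exp} and \eqref{mce-def} into \eqref{def-G} and matching powers of $\veps$ up to order four recovers \eqref{defwj}--\eqref{defw1234} and leaves \eqref{eqw4} for $\mcz$. Each summand of $R$ in \eqref{eqw4} is a factor $f^{(k)}(u_0)$ (or $G$) times a monomial in the $w_{j,\veps}$ of total $\veps$-degree at least five, so the same H\"older bounds give $\|R\|_{L^1(\mr;L^2)}\leqs|\veps|^5$ with constants depending only on the (now fixed, globally finite) norms of $u_0$ and the $w_\alpha$, and similarly $\|V_j\zeta^j\|_{L^1(I;L^2)}\leqs\|\zeta\|_{L^5(I;L^{10})}^{j}$. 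Running the interval-by-interval contraction once more for \eqref{eqw4}---on the last interval the needed smallness now also comes from $|\veps|$ small---gives $\|\mcz\|_{L^5(\mr;L^{10})}\leqs|\veps|^5$, whence $\|\square\mcz\|_{L^1(\mr;L^2)}\leqs|\veps|^5$ on re-reading \eqref{eqw4}; this is \eqref{mcet0}. Finally $\mcz=u-(u_0+w_\veps)$ solves $\square\mcz=F$ with $\mcn_-\mcz=0$ and $\|F\|_{L^1(\mr;L^2)}\leqs|\veps|^5$; the uniqueness above identifies $\mcz$ with the Duhamel solution forced from $t=-\infty$, so $E_0(\mcz)(0)\leqs\|F\|_{L^1(\mr;L^2)}$ by \eqref{STES0-1} and then $\|\mcn_+\mcz\|_{L^2(\mr\times\ms^2)}\leqs|\veps|^5$ by \eqref{est-radf}; since $(\vphi,\psi,f)\mapsto\mcr_+$ is linear and $\mcn_+ u=\mca(\Upsilon_0+\sum_{j}\eps_j\Upsilon_j)$ by \eqref{nonl-Iso}--\eqref{frscat}, subtracting gives \eqref{radf-approx}. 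The step I expect to be the main obstacle is keeping this contraction uniform and global in time: the finitely many intervals on which $\|u_0\|_{L^5 L^{10}}$ (and, at the last step, $\|w_\veps\|_{L^5 L^{10}}$) is small must be chained with constants that do not degrade from one interval to the next, which is exactly what the global finiteness of the $X(\mr;\mr^3)$ norms bought in the earlier stages guarantees but must be tracked quantitatively; a secondary point needing care is the bookkeeping in \eqref{eqw4} verifying that every monomial in $R$ really has $\veps$-degree at least five.
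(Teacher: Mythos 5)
Your construction of the $w_\alpha$ and your endgame follow the paper's proof closely: the linear solvability statement is the paper's Lemma \ref{StPot} (proved there exactly as you propose, by contraction on $(-\infty,T_0]$, finitely many intermediate intervals, and $[T_1,\infty)$, with smallness of the potential measured in $L^{\frac54}L^{\frac52}$), the inductive solution of \eqref{defwj}--\eqref{defw1234} via the H\"older product bounds is Proposition \ref{sol123} (Lemmas \ref{several-ineq} and \ref{prodvw1}), and your conversion of $\mcn_-\mcz=0$ together with $\|\square\mcz\|_{L^1L^2}\lesssim|\veps|^5$ into \eqref{radf-approx} by \eqref{STES0-1} and \eqref{est-radf} is essentially the paper's own derivation of the theorem from Propositions \ref{sol123} and \ref{solw4}.

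The one step where your argument is not yet a proof is the closing of the $L^5L^{10}$ bound for $\mcz$. The paper does not re-solve \eqref{eqw4}: it takes the already-existing $\mcz=u-(u_0+w_\veps)$, applies \eqref{STES} on $(-\infty,T]$ to get $M(T)\le a+\sum_{j=2}^5C_jM(T)^j$ with $a\lesssim|\veps|^5$ and $M(T)=\|\mcz\|_{L^5((-\infty,T];L^{10})}$, and closes with Lemma \ref{poly-ineq}, a continuity argument exploiting $M(-\infty)=0$ and the smallness condition $\sum_j2^jC_ja^{j-1}<1$. Some such bootstrap is indispensable: the smallness needed to absorb the terms $V_j\mcz^j$, $j\ge2$, and $\mcz^5$ is smallness of $\|\mcz\|$ itself, which is exactly what is being proved, so the Strichartz inequality alone does not give $M\lesssim a$. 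Your substitute --- ``run the interval-by-interval contraction once more for \eqref{eqw4}'' --- can be made rigorous, but two things you leave implicit must then be supplied: (i) the coefficient $G$ in \eqref{eqw4} is $G(u,u_0)$ with $u=u_0+w_\veps+\mcz$, so a self-contained fixed-point formulation must carry the dependence on the unknown (the needed difference estimate follows from $|f'(u)|\le C|u|^4$, not merely from boundedness of $G$); and (ii) the fixed point produced in a ball of radius $\sim|\veps|^5$ must be identified with the already-defined $\mcz$, which requires the uniqueness of the Shatah--Struwe solution $u$ (Theorem \ref{ShaStru} / Bahouri--G\'erard), not the linear uniqueness you established, which only covers $\square w=Vw+g$ with given coefficients. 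Either patch --- the paper's Lemma \ref{poly-ineq}, or contraction plus identification through Theorem \ref{ShaStru} --- repairs the step; as written, ``gives $\|\mcz\|_{L^5(\mr;L^{10})}\lesssim|\veps|^5$'' is asserted rather than proved.
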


This implies that
\begin{corollary} Let $\Ups_j\in L^2(\mr\times \ms^2),$ $j=0,1,2,3,4,$ let $u_0$ satisfy \eqref{eq-u0} and let
 $w_\alpha,$ $|\alpha|\leq 4,$ satisfy \eqref{defwj} to \eqref{defw1234}.  Let $\eps_0$ be as in Theorem \ref{expansion1}. Then, for $|\veps|<\eps_0,$
\begin{gather}
\begin{gathered}
\mcl_+\mcl_-^{-1}(\Upsilon_0+\sum_{j=1}^4 \eps_j\Upsilon_j)=   \mca(\Upsilon_0+\sum_{j=1}^4 \eps_j\Upsilon_j)= 
\mca (\Upsilon_0)+ \sum_{|\alpha|\leq 4} \veps\,{}^\alpha \Xi_\alpha+  O_{L^2(\mr\times \ms^2)}(|\veps|^5), \\
\text{ where } \Xi_\alpha=\mcn_+ w_\alpha, \text{ and we adopt the notation \eqref{convw}.} 
\end{gathered}\label{radfeps}
\end{gather}
\end{corollary}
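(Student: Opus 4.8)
The plan is to obtain the corollary as a short formal consequence of Theorem \ref{expansion1}; no new estimates are needed. First I would unwind the notation. By the definition \eqref{frscat} of the Friedlander scattering operator, $\mca=\mcl_+\circ\mcl_-^{-1}$, and the solution $u$ of \eqref{u-ups} is by construction the one with incoming radiation field $\mcn_- u=\Upsilon_0+\sum_{j=1}^4\eps_j\Upsilon_j$; hence $\mcl_+\mcl_-^{-1}(\Upsilon_0+\sum_{j=1}^4\eps_j\Upsilon_j)=\mca(\Upsilon_0+\sum_{j=1}^4\eps_j\Upsilon_j)=\mcn_+ u$, which settles the first equality in \eqref{radfeps}.

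Next I would insert the decomposition $u=u_0+w_\veps+\mcz$ of \eqref{mce-def}, with $w_\veps=\sum_{|\alpha|\leq 4}\vepa w_\alpha$ as in \eqref{defw-exp}. For a fixed function the forward radiation field $\mcn_+$ of \eqref{radf-func} is just the $r\to\infty$ limit of $r\,\p_s(\cdot)$, so it is linear, and since $w_\veps$ is a finite sum we obtain $\mcn_+ u=\mcn_+ u_0+\sum_{|\alpha|\leq 4}\vepa\,\mcn_+ w_\alpha+\mcn_+\mcz$, provided every $\mcn_+ w_\alpha$ is well defined. To check that, I would note that each $w_\alpha\in X(\mr;\mr^3)$ by Theorem \ref{expansion1}, so $w_\alpha(0)\in\dot H^1(\mr^3)$ and $\p_t w_\alpha(0)\in L^2(\mr^3)$, while the right-hand side $\square w_\alpha$ of \eqref{defwj}--\eqref{defw1234} lies in $L^1(\mr;L^2(\mr^3))$ --- this is precisely the kind of H\"older estimate in the $L^5(\mr;L^{10}(\mr^3))$ norm used in the proof of Theorem \ref{expansion1} to handle the Duhamel terms. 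Hence \eqref{bddnes} and \eqref{est-radf} apply and give $\mcn_+ w_\alpha=\mcr_+(w_\alpha(0),\p_t w_\alpha(0),\square w_\alpha)\in L^2(\mr\times\ms^2)$. Putting $\Xi_\alpha=\mcn_+ w_\alpha$, and noting that $u_0$ has incoming radiation field $\Upsilon_0$ by \eqref{eq-u0}, so $\mcn_+ u_0=\mca(\Upsilon_0)$, produces the main terms on the right of \eqref{radfeps}.

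It then remains only to bound $\mcn_+\mcz$, and here there is essentially nothing to prove: for $|\veps|<\eps_0$, with $\eps_0$ as in Theorem \ref{expansion1}, the estimate \eqref{radf-approx} says $\mcn_+(u_0+w_\veps)-\mca(\Upsilon_0+\sum_{j=1}^4\eps_j\Upsilon_j)=O_{L^2(\mr\times\ms^2)}(|\veps|^5)$, and since $\mcn_+ u=\mca(\Upsilon_0+\sum_{j=1}^4\eps_j\Upsilon_j)$ this left-hand side equals $-\mcn_+\mcz$. If one prefers not to quote \eqref{radf-approx} directly, one argues instead that $\mcz$ has vanishing backward radiation field while, by \eqref{mcet0}, $\|\square\mcz\|_{L^1(\mr;L^2(\mr^3))}\leq C|\veps|^5$, so that \eqref{est-radf} applied with zero Cauchy data gives $\|\mcn_+\mcz\|_{L^2(\mr\times\ms^2)}\lesssim\|\square\mcz\|_{L^1(\mr;L^2(\mr^3))}=O(|\veps|^5)$. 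Combining the three steps yields \eqref{radfeps}.

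There is no genuine obstacle inside the corollary itself: all the analytic content --- the existence and uniqueness of the $w_\alpha$ in $X(\mr;\mr^3)$, the well-posedness of \eqref{u-ups}, and the sharp fifth-order remainder bound \eqref{mcet0}, which in turn rests on the Ginibre--Velo Strichartz estimates \eqref{STES} --- is already packaged in Theorem \ref{expansion1}. The only line in the corollary that warrants a moment's care is the well-definedness of the individual radiation fields $\mcn_+ w_\alpha$ as elements of $L^2(\mr\times\ms^2)$, and that follows at once from $w_\alpha\in X(\mr;\mr^3)$ together with the mapping property \eqref{bddnes}; I anticipate no difficulty there.
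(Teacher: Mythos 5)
Your proposal is correct and follows essentially the same route as the paper, which presents the corollary as an immediate consequence of Theorem \ref{expansion1} (in particular of \eqref{radf-approx}, whose proof there is exactly your alternative argument: $\mcn_-\mcz=0$ plus \eqref{mcet0} controls $E_0(\mcz(T),\p_t\mcz(T))$ and then the forward estimate \eqref{est-radf} bounds $\mcn_+\mcz$). The extra details you supply --- linearity of $\mcn_+$ on the finite sum, $\mcn_+u_0=\mca(\Ups_0)$, and well-definedness of $\mcn_+w_\alpha$ via $w_\alpha\in X(\mr;\mr^3)$, $\square w_\alpha\in L^1(\mr;L^2(\mr^3))$ and \eqref{bddnes} --- are precisely what the paper leaves implicit.
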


%%%%%%%%%%%%
\section{Proof of Theorem \ref{expansion1}}

Since the maps $\mcl_\pm$ defined in \eqref{nonl-Iso} are isomorphisms, we know that there exist  unique $u, u_0 \in X(\mr;\mr^3)$ satisfying equations \eqref{u-ups} and \eqref{eq-u0}, we only need to prove that  there exist unique $w_\alpha \in X(\mr;\mr^3),$ $|\alpha|=1,2,3,4,$  satisfying \eqref{defwj}, \eqref{defwjk}, \eqref{defw123} and \eqref{defw1234} and if  $\zed(y,\veps)$ is given by \eqref{mce-def}, it satisfies \eqref{mcet0}, and  finally \eqref{radf-approx} holds.

  We will prove the following two propositions:
\begin{prop}\label{sol123} Let $u_0\in X(\mr;\mr^3)$ satisfy \eqref{eq-u0} and let $\Ups_1\in L^2(\mr\times \ms^2).$  Then
there exist unique $w_\alpha \in X(\mr;\mr^3),$ $|\alpha|=1,$ satisfying \eqref{defwj}. Furthermore, 
\begin{enumerate}[1.]
\item Given  $u_0, w_{\alpha}\in X(\mr,\mr^3),$ $|\alpha|=1,$ then for $|\alpha|=2,$ there exists a unique $w_{\alpha}\in X(\mr;\mr^3),$ satisfying \eqref{defwjk}, 
\item Given  $u_0, w_{\alpha}\in X(\mr,\mr^3),$ $|\alpha|=1, 2,$  then for $|\alpha|=3,$ there exists a unique $w_{\alpha}\in X(\mr;\mr^3),$ satisfying \eqref{defw123}, 
\item Given  $u_0, w_{\alpha}\in X(\mr,\mr^3),$ $|\alpha|=1,2,3,$  then for $|\alpha|=4,$ there exists a unique $w_{\alpha}\in X(\mr;\mr^3),$ satisfying \eqref{defw1234}.
\end{enumerate}
\end{prop}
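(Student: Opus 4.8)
The plan is to reduce the four assertions to a single linear statement and then induct on $|\alpha|$. Fix $u_0\in X(\mr;\mr^3)$ and put $V=f'(u_0)$. The claim is: for every $\Upsilon\in L^2(\mr\times\ms^2)$ and every $g\in L^1(\mr;L^2(\mr^3))$ there is a unique $w\in X(\mr;\mr^3)$ with $\square w+Vw=g$ and $\mcn_-w=\Upsilon$, and moreover
\begin{gather*}
\|w\|_{X(\mr;\mr^3)}\ \lesssim\ \|\Upsilon\|_{L^2(\mr\times\ms^2)}+\|g\|_{L^1(\mr;L^2)},
\end{gather*}
with a constant depending only on $u_0$. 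Equations \eqref{defwj}, \eqref{defwjk}, \eqref{defw123}, \eqref{defw1234} all have this form with the \emph{same} operator $\square+V$: in the $|\alpha|=k$ equation the source $g=g_\alpha$ is a fixed finite sum of products of $u_0$ and of the $w_\beta$, $|\beta|<k$, which are produced at earlier stages of the induction, while the $-f'(u_0)w_\alpha$ term is absorbed into $\square+V$. So the proposition follows once the linear statement is proved and one checks that these $g_\alpha$ lie in $L^1(\mr;L^2)$.

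The latter is where {\bf H.\ref{H4}} enters. Since $|f^{(j)}(u_0)|\leq C_j|u_0|^{5-j}$ and $u_0\in X(\mr;\mr^3)\subset L^5(\mr;L^{10}(\mr^3))$, every term on the right of \eqref{defwj}--\eqref{defw1234} --- i.e. $f'(u_0)w$, $f''(u_0)w_{\beta_1}w_{\beta_2}$, $f^{(3)}(u_0)w_{\gamma_1}w_{\gamma_2}w_{\gamma_3}$, $f^{(4)}(u_0)w_{\zeta_1}w_{\zeta_2}w_{\zeta_3}w_{\zeta_4}$, and the mixed terms in \eqref{defw123}, \eqref{defw1234} --- is pointwise dominated, up to a constant, by a product of exactly five functions drawn from $\{u_0\}$ and the $w$'s occurring in it ($5-j$ copies of $u_0$ against the $w$-factors), each of which lies in $L^5(\mr;L^{10}(\mr^3))$. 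Since $5\cdot\frac15=1$ and $5\cdot\frac1{10}=\frac12$, Hölder in $t$ and in $x$ gives, for instance on any interval $I$,
\begin{gather*}
\|f'(u_0)w\|_{L^1(I;L^2)}\ \lesssim\ \|u_0\|_{L^5(I;L^{10})}^{4}\,\|w\|_{L^5(I;L^{10})},
\end{gather*}
and likewise every $g_\alpha$ is bounded in $L^1(\mr;L^2)$ by the product of the five corresponding $L^5(\mr;L^{10})$ norms, all finite once the $w_\beta$, $|\beta|<|\alpha|$, are in $X(\mr;\mr^3)$.

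To prove the linear statement I would argue as in the construction of the isomorphisms \eqref{nonl-Iso} in \cite{BasSaB} and of the wave operators in \cite{BahGer}. Using the unitarity \eqref{defradf}, choose the free solution $v$ of $\square v=0$ with $\mcr_-(v)=\Upsilon$, and recast the problem as the Duhamel equation
\begin{gather*}
w(t)\ =\ v(t)-\int_{-\infty}^{t}\frac{\sin\!\big((t-s)\sqrt{-\Delta}\,\big)}{\sqrt{-\Delta}}\,\big(Vw-g\big)(s)\,ds .
\end{gather*}
Because $t\mapsto\int_{-\infty}^{t}\|u_0(s)\|_{L^{10}}^{5}\,ds$ is continuous and bounded by $\|u_0\|_{L^5(\mr;L^{10})}^{5}<\infty$, for any $\eta>0$ one can split $\mr$ into \emph{finitely many} intervals $I_1=(-\infty,t_1),I_2,\dots,I_M$ with $\|u_0\|_{L^5(I_k;L^{10})}\leq\eta$. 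On $I_1$ the base point of the iteration is $v$; on each later $I_k$ it is the Cauchy data of $w$ transported from $t_{k-1}$. On every $I_k$, the Strichartz/energy estimate \eqref{STES} combined with the bound above shows that, once $\eta$ is small enough that $C_rC\eta^4<\ha$, the right-hand side is a contraction on $X(I_k;\mr^3)$, and
\begin{gather*}
\|w\|_{L^5(I_k;L^{10})}+\sup_{t\in I_k}E_0(w,\p_t w)(t)\ \leq\ 2C_r\big(E_0(w,\p_t w)(t_{k-1})+\|g\|_{L^1(I_k;L^2)}\big).
\end{gather*}
Patching these solutions (their Cauchy data match across the endpoints) produces $w\in X(\mr;\mr^3)$; since $M<\infty$, iterating the last estimate over $k$ yields the global bound. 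Uniqueness follows by running the same contraction on the difference of two solutions, which solves the homogeneous equation $\square z+Vz=0$ with zero incoming data. Finally $\mcn_-w=\Upsilon$ is read off from the mapping property \eqref{bddnes} and the representation \eqref{linrad}: $v$ contributes the incoming field $\Upsilon$, while $w-v$ has vanishing incoming radiation field, since it is $\mcr_-$ of a forcing in $L^1(\mr;L^2)$ obtained by integrating up from $-\infty$ (the computation is as in \cite{BasSaB}).

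The induction is then immediate: for $|\alpha|=1$ take $g=0$, $\Upsilon=\Upsilon_j$ to get $w_j$ solving \eqref{defwj}; with those in hand the source of \eqref{defwjk} lies in $L^1(\mr;L^2)$ by the quadratic version of the Hölder bound, so the linear statement with $\Upsilon=0$ gives the unique $w_\alpha$, $|\alpha|=2$; the same step, using the cubic and quartic $L^1L^2$ bounds, handles \eqref{defw123} and then \eqref{defw1234}. I expect the only genuinely delicate point to be the one already present in \cite{BahGer,BasSaB}: the data is normalized at $t=-\infty$ rather than as Cauchy data at a finite time, so one cannot simply invoke a local well-posedness theorem, and the interval decomposition must be arranged so that the result is a bona fide element of $L^5(\mr;L^{10})$ and not merely of $L^5_{\loc}$ --- which is exactly what the \emph{finiteness} of $M$, forced by $u_0\in L^5(\mr;L^{10})$, provides.
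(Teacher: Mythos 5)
Your proposal is correct and follows essentially the same route as the paper: a linear contraction-mapping argument for $\square w+f'(u_0)w=g$ with prescribed backward radiation field, run on finitely many time intervals where the potential is small (smallness of $\|u_0\|_{L^5(I;L^{10})}$ is equivalent, via {\bf H.\ref{H4}}, to the paper's smallness of $\|f'(u_0)\|_{L^{5/4}(I;L^{5/2})}$), using the Strichartz estimate \eqref{STES} and the radiation-field bounds \eqref{est-radf}, combined with the five-factor H\"older estimates placing each source term in $L^1(\mr;L^2(\mr^3))$, and then induction on $|\alpha|$. The only differences (Duhamel integral from $-\infty$ instead of the paper's auxiliary map $\mcc$ with vanishing incoming field, and stating an explicit a priori bound) are cosmetic.
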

and
\begin{prop}\label{solw4} Let $u \in X(\mr;\mr^3)$ be the unique Shatah-Struwe solution of \eqref{u-ups}.   Let  
$u_0 \in X(\mr;\mr^3)$ satisfy \eqref{defwj} and let $w_\alpha\in X(\mr;\mr^3),$ $|\alpha|\leq 4,$  satisfy \eqref{defwj} to \eqref{defw1234}.  Let $G$ be defined by \eqref{def-G} and let $\zed(y,\veps)$ be defined by \eqref{defw-exp}.  Then there exists $C>0$ and $\eps_0>0$ such that for $|\vec{\eps}|<\eps_0,$ and for any $T\in \mr,$ equation \eqref{mcet0} is satisfied.
\end{prop}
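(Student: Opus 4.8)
The plan is to build the whole expansion by iterating a single well-posedness statement for linear wave equations with a potential $V(t,x) = f'(u_0)$ and an $L^1_t L^2_x$ forcing term, and then to close the argument with a fixed-point/continuity estimate for the remainder $\mcz$. The key fact we exploit is that $f'(u_0)$ is controlled by $|u_0|^4$ by hypothesis {\bf H.\ref{H4}}, so by H\"older in $x$ (with exponents $10/4$ and $10/6$) one has $\|f'(u_0) w\|_{L^2_x} \leqs \|u_0\|_{L^{10}_x}^4 \|w\|_{L^{10}_x}$, hence $\|f'(u_0)w\|_{L^1_t L^2_x} \leqs \|u_0\|_{L^5_t L^{10}_x}^4 \|w\|_{L^5_t L^{10}_x}$ by H\"older in $t$. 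Therefore, \emph{on a short enough time interval} where $\|u_0\|_{L^5 L^{10}}$ is small, the map $w \mapsto \square^{-1}(-f'(u_0)w + \text{source})$ (with prescribed past radiation field, solved as a Cauchy problem from $t=-\infty$ using the isometry $\mcr_-$) is a contraction on the $X$-norm, by the Strichartz estimate \eqref{STES}. Since $u_0 \in X(\mr;\mr^3)$, the full line $\mr$ is covered by finitely many such intervals, and one patches the local solutions together; this gives a unique $w_j \in X(\mr;\mr^3)$ solving \eqref{defwj}. The same scheme, now with a \emph{fixed} source term built from lower-order $w_\alpha$'s and the bounded coefficients $f^{(k)}(u_0)$ — each again estimated in $L^1_t L^2_x$ by H\"older using $|f^{(k)}(u_0)| \leqs |u_0|^{5-k}$ and the product structure, so that e.g. $\|f^{(3)}(u_0) w_{\gamma_1} w_{\gamma_2} w_{\gamma_3}\|_{L^1 L^2} \leqs \|u_0\|_{L^5 L^{10}}^2 \prod \|w_{\gamma_j}\|_{L^5 L^{10}}$ — yields uniqueness and existence of $w_\alpha$ for $|\alpha| = 2, 3, 4$ inductively. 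This is Proposition \ref{sol123}.

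For Proposition \ref{solw4}, the idea is to rewrite \eqref{mce-def} as the equation \eqref{eqw4} for $\mcz$: $\square \mcz = -V_1 \mcz - V_2 \mcz^2 - \cdots - V_5 \mcz^5 - R$ with zero past radiation field. The point is that $R$ collects exactly the products of the $w_\alpha$'s of total $\veps$-degree $\geq 5$, so by the degree bookkeeping in the definitions of $R_1, \dots, R_4$ and the multilinear $L^1 L^2$ estimates above one gets $\|R\|_{L^1((-\infty,T);L^2)} \leqs C(T) |\veps|^5$ — but one must first check the constant can be taken \emph{uniform in $T$}, which follows because $\|u_0\|_{L^5(\mr;L^{10})}$ and $\|w_\alpha\|_{L^5(\mr;L^{10})}$ are globally finite. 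Then one runs the same contraction argument as in Proposition \ref{sol123} for $\mcz$ itself: on each short subinterval the linear part $-V_1 \mcz$ is a contraction (note $V_1$ differs from $f'(u_0)$ by terms which are small because they carry positive powers of $w_\veps$, hence of $\veps$), the higher-order terms $V_2 \mcz^2, \dots$ are superlinear and absorbed for $|\veps|$ small once $\|\mcz\|_X$ is already $O(|\veps|^5)$, and the source contributes $O(|\veps|^5)$. Patching the finitely many intervals gives \eqref{mcet0}. Finally \eqref{radf-approx} is immediate: apply $\mcr_+$ to the identity $u = u_0 + w_\veps + \mcz$, using that $\mcn_+ u = \mca(\Ups_0 + \sum \eps_j \Ups_j)$ by definition of $\mca$, that the radiation field is linear and bounded by \eqref{est-radf}, and that $\mcr_+$ applied to $\mcz$ (equivalently to $\square \mcz$ as a forcing term with zero Cauchy data at $-\infty$) is $O_{L^2}(\|\square \mcz\|_{L^1((-\infty,T);L^2)} ) = O(|\veps|^5)$, the bound being $T$-independent; letting $T \to \infty$ finishes it.

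The main obstacle I expect is the \emph{uniformity in $T$} of all the estimates, i.e.\ producing global-in-time bounds rather than merely local ones. The naive short-time contraction only gives constants that grow with the number of subintervals needed to exhaust a given $[-N, T]$, and a priori that number is unbounded as $T \to \infty$. The resolution must use that $u_0, w_\alpha \in X(\mr;\mr^3)$ \emph{globally}, so $\|u_0\|_{L^5([a,b];L^{10})} + \sum \|w_\alpha\|_{L^5([a,b];L^{10})} < \delta_0$ on each piece of a \emph{finite} partition of all of $\mr$ — the finiteness being exactly the content of global Strichartz control — after which the patching constant is the same fixed finite product of local constants regardless of how far out $T$ sits. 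One should also be slightly careful that the remainder equation \eqref{eqw4} is genuinely a fixed-point problem for $\mcz$ with the right smallness: one sets up the iteration in the ball $\{\|\mcz\|_X \leq C_0 |\veps|^5\}$ and checks self-mapping and contraction there, which works because every nonlinear term in $\mcz$ is at least quadratic and the source $R$ is already $O(|\veps|^5)$; the linear term $V_1 \mcz$ is handled by the partition-of-$\mr$ device exactly as for the $w_\alpha$.
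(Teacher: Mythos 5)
Your treatment of Proposition \ref{sol123} follows the paper's own scheme (contraction on $(-\infty,T_0]$ with the radiation-field condition imposed through $\mcr_-$, then a finite partition of $[T_0,T_1]$ and a final interval on which the scale-invariant norm of the potential is small), so there is nothing to add there. For Proposition \ref{solw4} itself your route is genuinely different from the paper's. You propose to treat \eqref{eqw4} as a fixed-point problem and to construct a solution in the ball $\{\|\mcz\|\leq C_0|\veps|^5\}$, interval by interval. The paper deliberately does not do this: it stresses that $\mcz$ is already the known function \eqref{mce-def} (existence and uniqueness come from Bahouri--G\'erard and Proposition \ref{sol123}), and it proves \eqref{mcet0} as an a priori bound: Strichartz \eqref{STES} together with the product estimates \eqref{est-prodvw} give $M(T)\leq a+\sum_{j\geq 2}C_j M(T)^j$ for $M(T)=\|\mcz\|_{L^5((-\infty,T];L^{10})}$ and $a\lesssim |\veps|^5$, after the linear term $V_1\mcz$ is absorbed on intervals where $\|V_1\|_{L^{5/4};L^{5/2}}<\delta$ as in \eqref{boundv1}; the superlinear terms are then handled by the continuity/bootstrap Lemma \ref{poly-ineq}, using that $M$ is continuous and tends to $0$ as $T\to-\infty$ because $\mcz\in L^5(\mr;L^{10})$ globally, rather than by a small-ball ansatz. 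The device for uniformity in $T$ is the same in both arguments (a finite partition of $\mr$ dictated by global finiteness of the scale-invariant norms), and your remark that the partition can be chosen from $f'(u_0)$ alone, since $V_1-f'(u_0)$ carries a factor of $w_\veps=O(|\veps|)$, is a correct refinement of the paper's choice.

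Two points must be supplied for your fixed-point version to prove the proposition as stated. First, \eqref{mcet0} concerns the specific remainder $u-(u_0+w_\veps)$, so after constructing a small solution $\tilde{\mcz}$ of \eqref{eqw4} you still have to identify it with that remainder: observe that $u_0+w_\veps+\tilde{\mcz}\in X(\mr;\mr^3)$ solves \eqref{u-ups} with the same backward radiation field, and invoke the bijectivity of $\mcl_-$ in \eqref{nonl-Iso} (uniqueness of the Shatah--Struwe solution with prescribed $\mcn_-$) to conclude $\tilde{\mcz}=\mcz$. The paper's a priori formulation avoids this step entirely, which is exactly why it emphasizes that it is not re-proving existence. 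Second, your phrase that the higher-order terms are ``absorbed once $\|\mcz\|_X$ is already $O(|\veps|^5)$'' is only legitimate inside the ball ansatz (self-mapping plus contraction); as an a priori statement it is circular, and the paper's substitute for it is precisely Lemma \ref{poly-ineq}. With these two points added your argument closes, and the passage from the $L^5L^{10}$ bound to $\|\square\mcz\|_{L^1L^2}\lesssim|\veps|^5$, and then to \eqref{radf-approx}, is the same as in the paper.
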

Assume these Propositions have been proved. We can then prove Theorem \ref{expansion1}.
\begin{proof}
We know that $\mcn_-\mcz(y,\veps)=0$ and that $\mcz(y,\veps)$ and $\mcg=\square \mcz$ satisfy \eqref{mcet0}.  Then it follows that, for any $T\in \mr,$
\begin{gather*}
\square \mcz =  \mcg, \;\ \mcg \in  L^1((-\infty,T);L^2(\mr^3)),\;\ 
 ||\mcg||_{L^1((-\infty,T);L^2(\mr^3))}\leq C |\veps|^5,\\
\mcr_-(\mcz(T), \p_t \mcz(T), -\mcg)=0.
\end{gather*}
This implies that
\begin{gather*}
\mcr_-(\mcz(T),\p_t \mcz(T),0)= \mcr_-(0,0,\mcg)(s-T,\omega),
\end{gather*}
and so we deduce from  \eqref{liniso} and \eqref{est-radf} that
\begin{gather*}
E_0(\mcz(T), \p_t \mcz(T))= ||\mcr_-(\mcz(T),\p_t \mcz(T),0)||_{L^2(\mr\times \ms^2)}=\\ ||\mcr_-(0,0,\mcg)(s-T,\omega)||_{L^2(\mr\times \ms^2)}\leq ||\mcg||_{L^1((-\infty,T); L^2(\mr^3))}.
\end{gather*}
So it follows that
\begin{gather*}
E_0(\mcz(T), \p_t \mcz(T))\leq ||\mcg||_{L^1((-\infty,T); L^2(\mr^3))} \leq C |\veps|^5.
\end{gather*}

But \eqref{mcet0} holds for all $T,$ and therefore $||\mcg||_{L^1((T,\infty);L^2(\mr^3))}\leq C |\veps|^5,$ and now the same estimate, applied to the forward radiation field, shows that
\begin{gather*}
||\mcn_+ \mcz||_{L^2(\mr\times \ms^2)}=|| \mcr_+(\mcz(T), \p_t \mcz(T), -\mcg)|| _{L^2(\mr\times \ms^2)}
\leq \\ E_0(\mcz(T), \p_t \mcz(T)) + ||\mcg||_{L^1([T,\infty); L^2(\mr^3)}\leq C |\veps|^5.
\end{gather*}
This is the same as saying that 
\begin{gather*}
\mcn_+(u-(u_0+w_{\veps})) = O_{L^2(\mr\times \ms^2)}(|\veps|^5).
\end{gather*}
This proves \eqref{radfeps} and  hence it proves Theorem \ref{expansion1}.
\end{proof}
\subsection{The proof of Proposition \ref{sol123}}

We begin by proving the following:
\begin{lemma}\label{StPot} If $V\in L^{\frac54}(\mr; L^{\frac52}(\mr^3)),$  $g\in L^1(\mr; L^2(\mr^3))$ and let $F\in L^2(\mr\times \ms^2),$ then there exists a unique $w\in X(\mr;\mr^3)$ such that
\begin{gather}
\begin{gathered}
\square w= Vw+g, \text{ in } \mr \times \mr^3, \\
\mcn_- w=F,
\end{gathered}\label{StPot1}
\end{gather}
where $X(\mr;\mr^3)$  is the space defined in \eqref{defX}.
\end{lemma}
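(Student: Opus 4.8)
The plan is to set up a contraction mapping (Picard iteration) in the space $X(\mr;\mr^3)$, using the Strichartz estimate \eqref{STES} as the fundamental linear estimate. The key observation is that the potential term $Vw$, with $V\in L^{5/4}(\mr;L^{5/2}(\mr^3))$ and $w\in L^5(\mr;L^{10}(\mr^3))$, lies in $L^1(\mr;L^2(\mr^3))$ by H\"older's inequality: indeed $\tfrac{1}{5/4}+\tfrac{1}{5}=1$ in the time variable and $\tfrac{1}{5/2}+\tfrac{1}{10}=\tfrac{1}{2}$ in the spatial variable, so $\|Vw\|_{L^1(\mr;L^2(\mr^3))}\le \|V\|_{L^{5/4};L^{5/2}}\,\|w\|_{L^5;L^{10}}$. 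Hence the forcing term on the right-hand side of \eqref{StPot1} has exactly the regularity controlled by \eqref{STES}, and the radiation field data $F\in L^2(\mr\times\ms^2)$ determines, via the inverse of the unitary map $\mcr_-$ of \eqref{defradf}, a finite-energy "free" solution whose forward evolution we then perturb.

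First I would set up the integral formulation: let $w_L$ be the solution of $\square w_L=0$ with $\mcn_- w_L=F$, which exists and satisfies $E_0(w_L)=\|F\|_{L^2}^2$ by \eqref{liniso}, and whose Strichartz norm is bounded by $C\|F\|_{L^2}$ by \eqref{STES}. Then $w$ solves \eqref{StPot1} iff $w=w_L+\Box^{-1}_-(Vw+g)$, where $\Box^{-1}_-$ denotes the solution operator with zero incoming radiation field; by \eqref{STES} and the energy estimate this operator is bounded from $L^1(\mr;L^2(\mr^3))$ into $X(\mr;\mr^3)$. Define the map $\Phi(w)=w_L+\Box^{-1}_-(Vw+g)$. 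Using the H\"older bound above, $\|\Phi(w_1)-\Phi(w_2)\|_{X}\le C\|V\|_{L^{5/4};L^{5/2}}\,\|w_1-w_2\|_{L^5;L^{10}}$. If $\|V\|_{L^{5/4};L^{5/2}}$ were small this gives an immediate contraction; in general it is not, so the main technical point is to run the contraction on a partition of $\mr$ into finitely many time intervals $I_k=[T_k,T_{k+1}]$ chosen so that $\|V\|_{L^{5/4}(I_k;L^{5/2}(\mr^3))}<\tfrac{1}{2C}$ — such a finite partition exists because $V\in L^{5/4}(\mr;L^{5/2})$ globally, so its restricted norm is a continuous function with no concentration. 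On each interval one solves the equation by contraction (taking the Cauchy data at $T_k$ from the previous step), obtaining a solution in $X(I_k;\mr^3)$; patching and using the energy estimate across interfaces gives a global solution in $X(\mr;\mr^3)$. Uniqueness follows the same way: any two solutions agree on $I_0$ by the contraction, hence have the same Cauchy data at $T_1$, and one proceeds inductively.

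The step I expect to be the main obstacle is the passage from local to global — more precisely, verifying that the finitely-many-intervals argument actually closes, i.e. that the energy of the solution constructed on $I_k$ is controlled (so that the data fed into interval $I_{k+1}$ is finite) and that the number of intervals is finite and independent of where one starts. This is where one must combine \eqref{STES0-1}/\eqref{STES} carefully: on each $I_k$ one has $E_0(w)(T_{k+1})\le E_0(w)(T_k)+\|Vw+g\|_{L^1(I_k;L^2)}\le E_0(w)(T_k)+\tfrac12\|w\|_{L^5(I_k;L^{10})}+\|g\|_{L^1(I_k;L^2)}$, and the Strichartz norm on $I_k$ is in turn bounded in terms of $E_0(w)(T_k)$ and $\|g\|_{L^1(I_k;L^2)}$ once the potential term has been absorbed; summing the resulting (finitely many) inequalities yields a global-in-time bound. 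One also needs the "defining" interval partition to depend only on $V$, which it does since the $I_k$ are fixed once $V$ is given, before any solution is constructed. Everything else — the H\"older estimate, the mapping properties of $\Box^{-1}_-$, and the fixed-point argument — is routine given the linear theory quoted in the excerpt.
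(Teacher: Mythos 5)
Your proposal is correct and follows essentially the same route as the paper: a fixed-point argument based on the Strichartz estimate \eqref{STES}, the H\"older bound $\|Vw\|_{L^1L^2}\le\|V\|_{L^{5/4}L^{5/2}}\|w\|_{L^5L^{10}}$, smallness of $V$ on a semi-infinite initial interval $(-\infty,T_0]$ (where the incoming radiation field condition is imposed) and on finitely many further intervals, followed by patching via Cauchy data, with the interval-independence of the Strichartz constant closing the global step. The only cosmetic difference is that the paper folds both $g$ and $F$ into a single reference solution $w_0$ solving $\square w_0=g$ with $\mcr_-$-data $F$ and iterates only the potential term, whereas you keep $g$ in the Duhamel term and use the free solution $w_L$; this changes nothing essential.
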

\begin{proof} This is a linear equation, but with a potential in $L^{\frac54}(\mr; L^{\frac52}(\mr^3))$ and a forcing term  $g\in L^1(\mr; L^2(\mr^3)),$ and a given radiation field. So this result is by no means obvious. We follow the strategies used by Bahouri and G\'erard \cite{BahGer} to prove asymptotic completeness for \eqref{Weq} and by Carles and Gallagher \cite{CarGal} to establish the analogue of \eqref{radfeps} in the real analytic case.  Let $w_0\in X(\mr;\mr^3)$ be the solution to
\begin{gather}
\begin{gathered}
\square w_0=g, \\
w_0(0)=\vphi, \; \p_t w_0(0)=\psi,
\end{gathered}\label{eqw0}
\end{gather}
 $(\vphi_0,\psi_0)\in \hone(\mr^3) \times L^2(\mr^3)$ such that $\mcr_-(\vphi_0,\psi_0)=F.$ It follows from Theorem \ref{ShaStru} that $w_0\in X(\mr;\mr^3).$  Since $V\in L^{\frac54}(\mr; L^{\frac52}(\mr^3)),$ and $w_0 \in  L^{5}(\mr; L^{10}(\mr^3)),$ given $\del>0,$ there exist $T_0<0$ and $T_1>0$ such that
\begin{gather}
\begin{gathered}
||V||_{L^{\frac54}((-\infty,T_0]; L^{\frac52}(\mr^3))}<\del, \;\ ||V||_{L^{\frac54}([T_1,\infty); L^{\frac52}(\mr^3))}<\del. \;\ 
\end{gathered}\label{estVw}
\end{gather}

Since we are dealing with the backward radiation field in \eqref{StPot1}, we use the first inequality in \eqref{estVw}.
For any $ w\in {L^{5}((-\infty,T_0]; L^{10}(\mr^3))},$ let $\upsilon$ satisfy
\begin{gather}
\begin{gathered}
\square \upsilon= V (w_{0}+ w) \text{ in } (-\infty, T_0) \times \mr^3\\
\upsilon(T_0)= \vphi_{w}, \;\  \p_t \upsilon(T_0)= \psi_w, \text{ such that } \\
\mcr_-(\vphi_w,\psi_w, V (w_0+w))(s+T_0)=0.
\end{gathered}\label{weq-it1}
\end{gather}
 We want to show that the map
\begin{gather}
\begin{gathered}
\mcc: L^{5}((-\infty,T_0]; L^{10}(\mr^3)) \longmapsto L^{5}((-\infty,T_0]; L^{10}(\mr^3)), \\
w \longmapsto \ups
\end{gathered}\label{mcc0}
\end{gather}
is bounded and a contraction for small $\del.$  The fact that the equation is linear, allows us to work in the entire space
$L^{5}((-\infty,T_0]; L^{10}(\mr^3))$ instead of in a ball of small radius as in \cite{BahGer}.
It follows from \eqref{STES}  that 
\begin{gather*}
||\upsilon||_{L^{5}((-\infty,T_0]; L^{10}(\mr^3))}\leq C_{10}\left( ||\nabla\vphi_w||_{L^2} + ||\psi_w||_{L^2}+ || V(w_0+w)||_{L^1((-\infty,T_0]; L^2(\mr^3))}\right).
\end{gather*}
By assumption, $\mcr_-(\vphi_w,\psi_w)=\mcr(0,0,- V (w+ w_0)(s+T_0))=0,$ but we also know that
\begin{gather}
\begin{gathered}
||\mcr_-(\vphi_w,\psi_w,0)||_{L^2}= ||\nabla \vphi_w||_{L^2} + ||\psi_w||_{L^2},\\
 \text{ and it follows from \eqref{est-radf}  that }\\
 ||\mcr_-(0,0,- V (w+ w_0))(s+T_0)||_{L^2(\mr\times \ms^2)}\leq ||V (w+ w_0)||_{L^1((-\infty,T_0]; L^2(\mr^3))}.
 \end{gathered}\label{hypV}
  \end{gather}
Therefore, 
\begin{gather}
||\ups||_{L^{5}((-\infty,T_0]; L^{10}(\mr^3))} \leq 2C_{10} || V(w_0+w)||_{L^1((-\infty,T_0]; L^2(\mr^3))}. \label{estwj}
\end{gather}
But by applying H\"older inequality twice with $p=5$ and $q=\frac54$ we obtain 
\begin{gather}
\begin{gathered}
|| V(w_0+w)||_{L^1((-\infty,T_0]; L^2(\mr^3))}=\int_{-\infty}^{T_0}\left[ \int_{\mr^3} |V|^2 |w_0+w|^2 dx\right]^\ha dt\leq \\
\int_{-\infty}^{T_0} \left[\int_{\mr^3} |V|^{\frac52} dx\right]^{\frac25}
 \left[\int_{\mr^3} |w_0+ w|^{10} dx\right]^{\frac{1}{10}} dt \leq \\
\left[ \int_{-\infty}^{T_0} \left[\int_{\mr^3} |V|^{\frac52} dx\right]^{\frac12} dt\right]^{\frac45}
 \left[ \int_{-\infty}^{T_0}\left[\int_{\mr^3} |w_0+ w|^{10} dx\right]^{\frac{1}{2}} dt\right]^{\frac15}=\\
 ||V||_{L^{\frac54}((-\infty,T_0]; L^{\frac52}(\mr^3))} ||w_0+ w||_{L^5((-\infty,T_0]; L^{10}(\mr^3))}.
\end{gathered}\label{firt-H}
\end{gather}
It follows from \eqref{estVw} and \eqref{firt-H} that 
\begin{gather}
||\upsilon||_{L^{5}([T_0,\infty); L^{10}(\mr^3))} \leq 2C_{10}\del ||w+w_0||. \label{energy-ups}
\end{gather}
So  the map $\mcc$ defined in  \eqref{mcc0} is bounded.  

 Next observe that if $\upsilon_j=\mcc w_j,$ $j=1,2,$ then, since equation  \eqref{weq-it1} is linear, using the same argument it follows that 
\begin{gather*}
||\upsilon_1-\upsilon_2||_{L^{5}([T_0,\infty); L^{10}(\mr^3))}  \leq 2 C_{10}\del ||w_1-w_2||_{L^{5}([T_0,\infty); L^{10}(\mr^3))},
\end{gather*}
and so, if $2C_{10}\del<1,$  $\mcc$ is a contraction, and its fixed point $w^*$ satisfies 
\begin{gather*}
\square (w^*+w_0)= V(w^*+w_0)+g, \text{ in } (-\infty,T_0] \times \mr^3, \\
\mcn_- (w^*+w_0)(s)=F(s+T_0,\omega),
\end{gather*}

Now, we  want to extend $w^*$ to a global solution on $\mr\times \mr^3.$ We take a partition of $[T_0,T_1],$ where $T_0,T_1$ are such that \eqref{estVw} holds,  consisting  of non-overlapping intervals $I_j=[a_j,a_{j+1}],$ $0\leq j \leq N,$ such that $a_0=T_0,$ $a_N=T_1$ and 
\begin{gather}
\begin{gathered}
||V||_{L^{\frac54}(I_j; L^{\frac52}(\mr^3))}<\del, \;\  ||w_0||_{L^{5}(I_j; L^{10}(\mr^3))}<\del. \\ 
\end{gathered}\label{estVwj}
\end{gather}
We claim there exist unique $W_j\in X([a_0,a_1], \mr^3),$  $j=0,1,\ldots, N,$  such that
\
\begin{gather}
\begin{gathered}
\square W_0= V (W_0+w_0), \text{ in } (a_0, a_1) \times \mr^3,\\
W_0(T_0)= w^*(T_0), \;\  \p_t W_0(T_0)=w^*(T_0),
\end{gathered}\label{wi0}
\end{gather}
and for $1\leq j \leq N,$ 
\begin{gather}
\begin{gathered}
\square W_j= V (W_j+w_0), \text{ in } (a_j, a_{j+1}) \times \mr^3,\\
W_j(a_j)=W_{j-1}(a_j), \;\  \p_t W_j(a_j)=\p_t W_{j-1}(a_j).
\end{gathered}\label{defWj}
\end{gather}

We proceed as above, and the key point here is that the constant $C_{10}$ in the Strichartz estimate \eqref{STES} does not depend of the size of the interval. 

If  $\vtheta\in L^5(I_0; L^{10}(\mr^3)),$  and for such $\vtheta,$ let $\ups= \mcc_{I_0} \vtheta$ denote the solution to 
\begin{gather*}
\square \ups= V(\vtheta+w_0),\\
\ups (T_0)= w^*(T_0),  \;\ \p_t \ups (T_0)= w^*(T_0).
\end{gather*}
We proceed exactly as above and use H\"older inequality and \eqref{STES} to conclude that
\begin{gather*}
||\ups||_{L^5(I_0; L^{10}(\mr^3))} \leq C_{10}\left( ||\nabla w^*(T_0)||_{L^2}+ ||\p_t w^*(T_0)||_{L^2}+ 
||V||_{L^{\frac54}(I_0; L^{\frac52}(\mr^3))} ||\upsilon+w_0||_{L^{5}(I_0; L^{10}(\mr^3)))}\right),
\end{gather*}
so
\begin{gather*}
\mcc_{I_0}:  L^5(I_0; L^{10}(\mr^3)) \longmapsto L^5(I_0; L^{10}(\mr^3)),
\end{gather*}
and 
\begin{gather*}
||\mcc_{I_0}(\ups_1-\ups_2)||\leq C_{10} ||V||_{L^{\frac54}(I_0; L^{\frac52}(\mr^3))} ||\ups_1-\ups_2||_{L^{5}(I_0; L^{10}(\mr^3)))}\leq C_{10}\del  ||\ups_1-\ups_2||_{L^{5}(I_0; L^{10}(\mr^3)))}.
\end{gather*}
So, for $\del$ chosen as above, $\mcc_{I_0}$ is a contraction and its fixed point $W_0$ satisfies \eqref{wi0}. The same argument, with the same choice of $\del,$ applies to all intervals $I_j.$  This gives an extension of the solution $w,$ from $(-\infty,T_0]$ to $(-\infty,T_1].$ 

Next we want to show there exists a unique solution $W\in X([T_1,\infty);\mr^3)$ of 
\begin{gather*}
\square W= V (W+ w_0), \text{ in } (T_1, \infty) \times \mr^3,\\
W(T_1)= W_N(T_1), \;\   \p_t W(T_1)=\p_tW_N(T_1)
\end{gather*}
We use the same method as above, and the assumption on the norm of $V$ in $[T_1,\infty)$ in \eqref{estVw} and the choice of $\del$ guarantees that the corresponding map is a contraction. 
The function $w$ given by
\begin{gather*}
w=w^*, \; t\leq T_0,\\
w=W_j, \; t\in (a_j, a_j+1], \; j=0,1,\ldots, N,\\
w=W, \; t> T_1,
\end{gather*}
satisfies
\begin{gather*}
\square(w+w_0)= V(w+w_0)+g, \\
\mcn_-(w+w_0)=F.
\end{gather*}
This ends the proof of the Lemma.
\end{proof}

Next we will apply Lemma \ref{StPot} to prove Proposition  \ref{sol123}, but to do that,  first we need to prove estimates, 
 which control $L^pL^q$ norms of products of functions:
\begin{lemma}\label{several-ineq}  If $I \subset \mr$ is an interval, if $v_1,v_2, v_3,v_4 \in L^5(I; L^{10}(\mr^3))$ and if
$Q(x_1,x_2,x_3,x_4)$ is a homogeneous polynomial with positive coefficients of degree $j$ with $1\leq j \leq 5,$ then
\begin{gather}
\begin{gathered}
Q(v_1,v_2,v_3,v_4) \in L^{\frac{5}{j}}(I; L^{\frac{10}{j}}(\mr^3)) \text{ and } 
||Q(v_1,v_2,v_3,v_4)||_{L^{\frac{5}{j}}(I; L^{\frac{10}{j}}(\mr^3))} \leq Q(||v_1||,||v_2||,||v_3||,||v_4||), \\
\text{ where } ||v_j||=||v_j||_{L^5(I; L^{10}(\mr^3))}.
 \end{gathered} \label{vtpj}
\end{gather}
In particular, if $f\in C^5(\mr)$ is such that  $|f^{(m)}(u)|\leq C_m |u|^{5-m},$  if $u\in L^5(I; L^{10}(\mr^3))$ and $m\leq 4,$
\begin{gather}
\begin{gathered}
f^{(m)}(u) \in L^{\frac{5}{5-m}}(I; L^{\frac{10}{5-m}}(\mr^3)) \text{ and } 
||f^{(m)}(u)||_{L^{\frac{5}{5-m}}(I; L^{\frac{10}{5-m}}(\mr^3))}\leq C ||u||_{L^5(I; L^{10}(\mr^3))}^{5-m}.
\end{gathered}\label{growthf}
\end{gather}
\end{lemma}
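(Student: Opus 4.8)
The proof is a straightforward interpolation argument using generalized H\"older inequalities in both the spatial and temporal variables. I will first establish \eqref{vtpj} and then derive \eqref{growthf} as a special case.

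\textbf{Step 1: Reduction to monomials.} Since $Q$ has positive coefficients, by the triangle inequality in $L^{5/j}(I;L^{10/j}(\mr^3))$ it suffices to prove the estimate when $Q$ is a single monomial $c\, x_{i_1} x_{i_2}\cdots x_{i_j}$ of degree $j$ (with $c>0$), and then sum over the monomials of $Q$. Indeed, if each monomial term satisfies the bound with $v_k$ replaced by $\|v_k\|$, adding them back gives exactly $Q(\|v_1\|,\dots,\|v_4\|)$ on the right. So I reduce to showing $\|v_{i_1}\cdots v_{i_j}\|_{L^{5/j}(I;L^{10/j})} \leq \prod_{\ell=1}^j \|v_{i_\ell}\|_{L^5(I;L^{10})}$.

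\textbf{Step 2: Spatial H\"older.} For fixed $t$, apply the generalized H\"older inequality on $\mr^3$ with exponents $10/j = \big(\sum_{\ell=1}^j \tfrac{1}{10}\big)^{-1}$, i.e. each factor $v_{i_\ell}(t,\cdot)$ lives in $L^{10}(\mr^3)$ and there are $j$ of them. This gives
\begin{gather*}
\Big\| \prod_{\ell=1}^j v_{i_\ell}(t,\cdot)\Big\|_{L^{10/j}(\mr^3)} \leq \prod_{\ell=1}^j \|v_{i_\ell}(t,\cdot)\|_{L^{10}(\mr^3)}.
\end{gather*}

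\textbf{Step 3: Temporal H\"older.} Write $a_\ell(t) = \|v_{i_\ell}(t,\cdot)\|_{L^{10}(\mr^3)}$, so $a_\ell \in L^5(I)$ with $\|a_\ell\|_{L^5(I)} = \|v_{i_\ell}\|_{L^5(I;L^{10})}$. Applying the generalized H\"older inequality on $I$ to $\prod_\ell a_\ell$ with exponents $5/j = \big(\sum_\ell \tfrac15\big)^{-1}$ yields $\|\prod_\ell a_\ell\|_{L^{5/j}(I)} \leq \prod_\ell \|a_\ell\|_{L^5(I)}$. Combining with Step 2 gives $\|\prod_\ell v_{i_\ell}\|_{L^{5/j}(I;L^{10/j})} \leq \prod_\ell \|v_{i_\ell}\|_{L^5(I;L^{10})}$, which proves \eqref{vtpj} after summing over monomials. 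In particular all norms are finite, so $Q(v_1,v_2,v_3,v_4) \in L^{5/j}(I;L^{10/j}(\mr^3))$.

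\textbf{Step 4: The pointwise bound for $f^{(m)}$.} For \eqref{growthf}, the hypothesis $|f^{(m)}(u)| \leq C_m|u|^{5-m}$ means that pointwise $|f^{(m)}(u(t,x))| \leq C_m |u(t,x)|^{5-m}$. Since $5-m$ is a positive integer for $m\le 4$, I apply \eqref{vtpj} with $j = 5-m$, $Q(x_1,x_2,x_3,x_4) = x_1^{5-m}$ (a monomial of degree $5-m$), and $v_1 = v_2 = v_3 = v_4 = u$. This gives $\||u|^{5-m}\|_{L^{5/(5-m)}(I;L^{10/(5-m)})} \leq \|u\|_{L^5(I;L^{10})}^{5-m}$, and hence $\|f^{(m)}(u)\|_{L^{5/(5-m)}(I;L^{10/(5-m)})} \leq C_m \|u\|_{L^5(I;L^{10})}^{5-m}$, establishing \eqref{growthf}.

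There is no genuine obstacle here; the only point requiring minor care is bookkeeping the exponents, namely verifying $\sum_{\ell=1}^j \tfrac{1}{10} = \tfrac{j}{10}$ and $\sum_{\ell=1}^j \tfrac15 = \tfrac j5$ so that the generalized H\"older inequality applies with the stated target exponents $10/j$ and $5/j$, and checking that $5/j \ge 1$ and $10/j \ge 1$ when $1 \le j \le 5$ so these are legitimate Lebesgue exponents. The positivity of the coefficients of $Q$ is what makes the reduction to monomials produce exactly $Q(\|v_1\|,\dots,\|v_4\|)$ on the right-hand side rather than merely a constant multiple of it.
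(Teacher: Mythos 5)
Your proof is correct and follows essentially the same route as the paper: H\"older in the spatial variable followed by H\"older in time, with the reduction to monomials and the choice $j=5-m$, $v_1=\cdots=v_4=u$ handling \eqref{growthf}. The only cosmetic difference is that you invoke the generalized multi-factor H\"older inequality in one step, whereas the paper iterates the two-function Cauchy--Schwarz/H\"older inequality through the degrees $j=2,3$ (and notes $j=4,5$ are similar); the two are equivalent.
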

\begin{proof}  The statements is obvious when  $Q$ is of degree 1. When $Q$ is of degree  $2,$ the Cauchy-Schwarz inequality gives
\begin{gather*}
||v_1 v_2||_{L^{\frac{5}{2}}(I; L^{5}(\mr^3))}^{\frac{5}{2}} =  
\int_{I}  \left[ \int_{\mr^3} |v_1 v_2|^{5} dx\right]^{\frac{1}{2}} dt\leq 
\int_{I} \left[\int_{\mr^3} |v_1|^{10} dx\right]^{\frac14} \left[\int_{\mr^3} |v_2|^{10} dx\right]^{\frac14} dt\leq \\
\left[ \int_{I} \left[\int_{\mr^3} |v_1|^{10} dx\right]^{\frac12} dt\right]^\ha \left[ \int_{I} \left[\int_{\mr^3} |v_2|^{10} dx\right]^{\frac12} dt\right]^\ha= ||v_1||_{L^5(I; L^{10}(\mr^3))}^{\frac52} ||v_2||_{L^5(I; L^{10}(\mr^3))}^{\frac52}.
\end{gather*}
When $j=3,$ H\"older inequality with $p=3$ gives
\begin{gather*}
||v_1 v_2 v_3||_{L^{\frac{5}{3}}(I; L^{\frac{10}3}(\mr^3))}^{\frac{5}{3}} =  
\int_{I}  \left[ \int_{\mr^3} |v_1 v_2 v_3|^{\frac{10}3} dx\right]^{\frac{1}{2}} dt\leq 
\int_{I} \left[\int_{\mr^3} |v_1|^{10} dx\right]^{\frac16} \left[\int_{\mr^3} |v_2 v_3|^{5} dx\right]^{\frac13} dt\leq \\
\left[ \int_{I} \left[\int_{\mr^3} |v_1|^{10} dx\right]^{\frac12} dt\right]^{\frac13}
 \left[ \int_{I} \left[\int_{\mr^3} |v_2v_3|^{5} dx\right]^{\frac12} dt\right]^{\frac23}=  ||v_1||_{L^5(I; L^{10}(\mr^3))}^{\frac53} ||v_2v_3||_{L^{\frac{5}{2}}(I; L^{5}(\mr^3))}^{\frac53},
% 
%  \left[ \int_{I} \left[\int_{\mr^3} |v_2|^{10} dx\right]^{\frac14}  \left[\int_{\mr^3} |v_3|^{10} dx\right]^{\frac14}  dt\right]^{\frac23}\leq \\
% ||v_1||_{L^5(I; L^{10}(\mr^3))}^{\frac53}  ||v_2||_{L^5(I; L^{10}(\mr^3))}^{\frac53}  ||v_3||_{L^5(I; L^{10}(\mr^3))}^{\frac53}  
\end{gather*}
and the result follows from the previous inequality. 
The proofs for $j=4$ and $j=5$ are very similar.
\end{proof}

We also need the following
\begin{lemma}\label{prodvw1}  Let $V_m\in L^{\frac{5}{5-m}}(\mr;L^{\frac{10}{5-m}}(\mr^3)),$ $2\leq m \leq 4,$ and let 
$w_k \in L^{5}(\mr;L^{10}(\mr^3)),$ $1\leq k \leq 4,$ then
\begin{gather}
\begin{gathered}
||V_2 w_1 w_2||_{L^{1}(\mr;L^{2}(\mr^3))} \leq ||V_2||_{L^{\frac53}(\mr;L^{\frac{10}{3}}(\mr^3))} ||w_1||_{L^5(\mr;L^{10}(\mr^3))}  ||w_2||_{L^5(\mr;L^{10}(\mr^3))}, \\
||V_3 w_1 w_2 w_3||_{L^{1}(\mr;L^{2}(\mr^3))}\leq ||V_3||_{L^{\frac52}(\mr;L^{5}(\mr^3))} 
||w_1||_{L^5(\mr;L^{10}(\mr^3))}||w_2||_{L^5(\mr;L^{10}(\mr^3))} ||w_2||_{L^5(\mr;L^{10}(\mr^3))}, \\
||V_4 w_j w_k w_l w_m||_{L^{1}(\mr;L^{2}(\mr^3))}\leq \\ ||V_4||_{L^{5}(\mr;L^{10}(\mr^3))}
 ||w_j||_{L^5(\mr;L^{10}(\mr^3))}||w_k||_{L^5(\mr;L^{10}(\mr^3))}||w_l||_{L^5(\mr;L^{10}(\mr^3))}||w_m||_{L^5(\mr;L^{10}(\mr^3))}.
\end{gathered}\label{products}
\end{gather}
\end{lemma}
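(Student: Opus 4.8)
The plan is to deduce all three bounds in \eqref{products} from Lemma \ref{several-ineq} together with two applications of H\"older's inequality: one in the spatial variable $x\in\mr^3$ and one in the time variable $t\in\mr$. The content is entirely that the exponents match up, which is forced by the scaling of the energy-critical problem; no genuine choices are involved.

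I would first treat the middle inequality as the model case. By Lemma \ref{several-ineq} applied to the degree-$3$ monomial $Q(x_1,x_2,x_3)=x_1x_2x_3$, the product $w_1w_2w_3$ lies in $L^{5/3}(\mr;L^{10/3}(\mr^3))$ with $\|w_1w_2w_3\|_{L^{5/3}(\mr;L^{10/3}(\mr^3))}\leq\|w_1\|_{L^5(\mr;L^{10}(\mr^3))}\|w_2\|_{L^5(\mr;L^{10}(\mr^3))}\|w_3\|_{L^5(\mr;L^{10}(\mr^3))}$. Then, for a.e.\ fixed $t$, H\"older in $x$ with exponents $5$ and $10/3$ (note $\tfrac15+\tfrac{3}{10}=\tfrac12$) gives $\|V_3(t)\,w_1w_2w_3(t)\|_{L^2(\mr^3)}\leq\|V_3(t)\|_{L^5(\mr^3)}\|w_1w_2w_3(t)\|_{L^{10/3}(\mr^3)}$, and H\"older in $t$ with exponents $5/2$ and $5/3$ (note $\tfrac25+\tfrac35=1$) gives $\|V_3\,w_1w_2w_3\|_{L^1(\mr;L^2(\mr^3))}\leq\|V_3\|_{L^{5/2}(\mr;L^5(\mr^3))}\|w_1w_2w_3\|_{L^{5/3}(\mr;L^{10/3}(\mr^3))}$. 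Chaining the two displays gives the second inequality of \eqref{products}, with the evident correction that the last factor on the right should be $\|w_3\|$ rather than a repeated $\|w_2\|$.

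The other two cases are identical up to a shift of exponents. For the first inequality, Lemma \ref{several-ineq} with the degree-$2$ monomial gives $w_1w_2\in L^{5/2}(\mr;L^5(\mr^3))$, and then H\"older in $x$ with exponents $10/3$ and $5$ ($\tfrac{3}{10}+\tfrac15=\tfrac12$) followed by H\"older in $t$ with exponents $5/3$ and $5/2$ ($\tfrac35+\tfrac25=1$) produces the bound against $\|V_2\|_{L^{5/3}(\mr;L^{10/3}(\mr^3))}$. For the third inequality, Lemma \ref{several-ineq} with the degree-$4$ monomial gives $w_jw_kw_lw_m\in L^{5/4}(\mr;L^{5/2}(\mr^3))$, and then H\"older in $x$ with exponents $10$ and $5/2$ ($\tfrac{1}{10}+\tfrac25=\tfrac12$) and H\"older in $t$ with exponents $5$ and $5/4$ ($\tfrac15+\tfrac45=1$) produces the bound against $\|V_4\|_{L^5(\mr;L^{10}(\mr^3))}$.

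There is no real obstacle here; the lemma is a bookkeeping exercise whose only substance is the pair of elementary identities $\tfrac{5-m}{10}+\tfrac{m}{10}=\tfrac12$ and $\tfrac{5-m}{5}+\tfrac{m}{5}=1$ for $m=2,3,4$ — equivalently, that the product of $m$ functions in $L^5_tL^{10}_x$ is exactly H\"older-dual, relative to $L^1_tL^2_x$, to $L^{5/(5-m)}_tL^{10/(5-m)}_x$, which is the manifestation here of the criticality of $u^5$ on $\mr^{1+3}$ with respect to the Strichartz pair $(5,10)$. Once this is recorded, each estimate is immediate.
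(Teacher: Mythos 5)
Your proposal is correct and follows essentially the same route as the paper: two applications of H\"older's inequality (in $x$ for fixed $t$, then in $t$) to split off the $V_m$ factor, with Lemma \ref{several-ineq} handling the product of the $w$'s, and your exponent bookkeeping checks out. You also correctly note the typo in the statement (the repeated $\|w_2\|$ should be $\|w_3\|$), so nothing further is needed.
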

\begin{proof}  We just need to apply H\"older inequality with $p=\frac54$ and $q=5$ twice to verify that
\begin{gather*}
||V_4 w_j w_k w_l w_m||_{L^{1}(\mr;L^{2}(\mr^3))} \leq
 ||V_4||_{L^{5}(\mr;L^{10}(\mr^3))}||w_j w_k w_l w_m||_{L^{\frac54}(\mr;L^{\frac52}(\mr^3))}
 \end{gather*}
 and the last inequality follows from Lemma \ref{several-ineq}. 
 
 Similarly, an application of  H\"older inequality with $p=\frac52$ and $q=\frac53$ gives
\begin{gather*}
||V_3 w_1 w_2 w_3||_{L^{1}(\mr;L^{2}(\mr^3))}\leq 
 ||V_3||_{L^{\frac52}(\mr; L^5(\mr^3))}  ||w_1w_2w_3||_{L^{\frac53}(\mr; L^{\frac{10}{3}}(\mr^3))},
\end{gather*}
and again the desired inequality follows from Lemma \ref{several-ineq}.  The same argument gives the first inequality in \eqref{products}.
\end{proof}

We know from \eqref{growthf}  that $f'(u_0) \in L^{\frac{5}{4}}(I; L^{\frac{5}{2}}(\mr^3))$ and  therefore 
we can apply Lemma \ref{StPot} to conclude that there exist unique $w_j\in L^5(\mr;L^{10}(\mr^3)),$ $j=1,2,3,4,$ which satisfy \eqref{defwj}.

Similarly, to prove the existence of $w_\alpha$ with $|\alpha|=2,$  we use Lemma \ref{several-ineq} and Lemma \ref{prodvw1} to show that 
\begin{gather*}
f^{(2)}(u_0) w_j^2, \; f^{(2)}(u_0) w_j w_k  \in L^1(\mr; L^{2}(\mr^3)),
\end{gather*}
and so Lemma \ref{StPot} implies that there exist unique $w_\alpha \in L^5((\mr);L^{10}(\mr^3)),$ $|\alpha|=2$ satisfying \eqref{defwjk}.  
 
  Now, to prove the existence of $w_\alpha$ with $|\alpha|=3,$  we need to consider products  of the type 
  $f''(u_0) w_j w_\alpha,$ $|\alpha|=2$  and $f^{(3)}(u_0) w_j w_k w_m,$ but again Lemma \ref{several-ineq} and Lemma \ref{prodvw1} show that both are in 
  $L^1(\mr; L^{2}(\mr^3)),$  and so there exist $w_\alpha\in L^5(\mr;L^{2}(\mr^3)),$ $|\alpha|=3$ satisfying \eqref{defw123}.  To show that there exists a unique $w_\alpha$  with $|\alpha|=4$ we need to show that the terms
 $f^{(2)}(u_0) w_j w_{\beta},$ $|\beta|=3,$  $f^{(2)}(u_0) w_\gamma w_{\beta},$ $|\gamma|=|\beta|=2,$  
  $f^{(3)}(u_0) w_j w_k w_{\beta}$ with $|\beta|=2$  and  $f^{(4)}(u_0) w_j w_k w_l w_m$  are in $L^1(\mr; L^{2}(\mr^3)),$ and once again this is guaranteed by Lemma \ref{several-ineq} and Lemma \ref{prodvw1}.   This ends the construction of $w_{\alpha}$ with $|\alpha|\leq 4$ and the proof of Proposition \ref{sol123}.
 
 \subsection{The Proof of Proposition \ref{solw4}}

We  know from the work of Bahouri and G\'erard \cite{BahGer} that there exist  unique $u\in X(\mr;\mr^3)$ and $u_0\in X(\mr;\mr^3)$ satisfying \eqref{u-rad} and \eqref{rad-u0}.  We also know from Proposition \ref{sol123} that there exist unique  $w_{\alpha}\in X(\mr;\mr^3),$ with $|\alpha|\leq 4$  satisfying \eqref{defwj} to \eqref{defw1234}. Let  
$\zed(y,\veps)$ be given by \eqref{defw-exp}.   We emphasize we are not proving the existence or uniqueness of $\mcz(y,\veps).$  We already know this. We only need to prove the estimates \eqref{mcet0}. 
\begin{proof} We use $\zed=\zed(y,\veps)$ and rewrite \eqref{eqw4} as
\begin{gather}
\begin{gathered}
\square \zed= \La(\zed)-  R, \\
\mcn_- \zed=0, \\
\text{ where } \La(\zed) = -V_1 \zed- V_2 \zed^2- V_3 \zed^3 - V_4 \zed^4- V_5 \zed^5, \text{ and } \\
 V_j,  \;\ 1\leq j \leq 5, \text{ and } R \text{ are given by \eqref{eqw4}. }
\end{gathered}\label{defLa}
\end{gather}
 We know from Proposition \ref{sol123} and Lemma \ref{several-ineq}  that
\begin{gather}
||R||_{L^1(\mr;L^2(\mr^3))} \leq C|\veps|^5. \label{estR}
\end{gather}

 We already know from Lemma \ref{prodvw1} that $V_j\in L^{\frac{5}{5-j}}(\mr, L^{\frac{10}{5-j}}(\mr^3)),$ where
 $V_j,$ $1\leq j \leq 4,$ is defined in \eqref{eqw4}.  Since $|f^{(5)}(u)|\leq C,$  then  $|V_5|\leq C.$
 The following is a consequence of Lemma \ref{prodvw1}:
\begin{gather}
\begin{gathered}
 \text{ If } w_k \in L^5(\mr; L^{10}(\mr^3)), \; 1\leq k \leq 5, \;\ W_2=(w_1,w_2), W_3=(w_1,w_2, w_3), \\ 
 W_5=(w_1,w_2, w_3, w_4,w_5), \\
  V_j\in L^{\frac{5}{5-j}}(I, L^{\frac{10}{5-j}}(\mr^3)), \; 1\leq j \leq 4, \;\ V_5 \in L^\infty(\mr\times \mr^3), \\ 
  \text{ then for } \ga \text{ such that } \ga=(\ga_1,\ldots,\ga_j),  |\ga|\leq j, \;\  j\leq 5,  \\
||V_j W_j^\gamma ||_{L^1(\mr;L^2(\mr^3))}\leq ||V_j||_{L^{\frac{5}{5-j}}(\mr;L^{\frac{10}{5-j}}(\mr^3))} 
\Pi_{k=1}^j||w_k||_{L^5(\mr;L^{10}(\mr^3))}^{\gamma_k}, \;\ j\leq 4 \\ 
||V_5 W^\gamma ||_{L^1(\mr;L^2(\mr^3))}\leq ||V_5||_{L^{\infty}(\mr\times \mr^3)} 
\Pi_{k=1}^5||w_k||_{L^5(\mr;L^{10}(\mr^3))}^{\gamma_k} \\
\end{gathered}\label{est-prodvw}
\end{gather}

Since $\mcn_-\zed=0,$ we have that if $T\in \mr,$ and
\begin{gather}
\begin{gathered}
\square \zed= \La(\zed)+  R, \\
\zed(T)=\vphi_T, \;\ \p_t \zed(T)=\psi_T.
\end{gathered}\label{eq-for-zed}
\end{gather}
Therefore $\mcr_-(\vphi_T, \psi_T, \La(\zed)+ R)=0,$ and hence $E_0(\vphi_T,\psi_T)\leq 
||\La(\zed)+ R||_{L^1((-\infty,T]; L^2(\mr^3))}.$ Then Strichartz estimate \eqref{STES} implies that for any $T,$
\begin{gather}
||\zed||_{L^5((-\infty,T]; L^{10}(\mr^3)} + E_0(\zed,\p_t \zed)(T) \leq 2 C_{10} || \La(\zed)+ R||_{L^1((-\infty,T]; L^2(\mr^3))}. \label{aux-est}
\end{gather}
But in view of \eqref{est-prodvw}, this implies that
\begin{gather}
\begin{gathered}
||\zed||_{L^5((-\infty,T); L^{10}(\mr^3)}+E_0(\zed,\p_t \zed)(T)  \leq 2 C_{10} ||R||_{L^1((-\infty,T]; L^2(\mr^3))}+ \\
%2C_{10} ||V_1||_{L^{\frac{5}{5-j}}((-\infty,T];L^{\frac{10}{5-j}}(\mr^3))} ||\zed||_{L^5((-\infty,T];L^{10}(\mr^3))}+ \\
2 C_{10}\left(\sum_{j=1}^4 ||V_j||_{L^{\frac{5}{5-j}}((-\infty,T];L^{\frac{10}{5-j}}(\mr^3))} ||\zed||_{L^5((-\infty,T];L^{10}(\mr^3))}^j + C ||\zed||_{L^5((-\infty,T];L^{10}(\mr^3))}^5\right).
\end{gathered}\label{Aux-eq-for-zed}
\end{gather}
We pick $\del>0,$  such that
\begin{gather}
2 C_{10} \del <\ha, \label{Ch-del}
\end{gather}
and we the pick $T_0$ such that 
\begin{gather}
||V_1||_{L^{\frac{5}{4}}((-\infty,T_0];L^{\frac{5}{2}}(\mr^3))}<\del.\label{boundv1}
\end{gather}

Then, provided $T< T_0,$
\begin{gather*}
2C_{10} ||V_1||_{L^{\frac{5}{5-j}}((-\infty,T];L^{\frac{10}{5-j}}(\mr^3))} ||\zed||_{L^5((-\infty,T];L^{10}(\mr^3))}\leq 
\ha ||\zed||_{L^5((-\infty,T];L^{10}(\mr^3))}.
\end{gather*}

So we can absorb this term into the left side of \eqref{Aux-eq-for-zed} and  conclude that for any $T<T_0,$ such that \eqref{boundv1} holds
\begin{gather}
\begin{gathered}
||\zed||_{L^5((-\infty,T); L^{10}(\mr^3)} \leq 4 C_{10} ||R||_{L^1((-\infty,T); L^2(\mr^3))}+ \\
4 C_{10}\left(\sum_{j=2}^4 ||V_j||_{L^{\frac{5}{5-j}}((-\infty,T];L^{\frac{10}{5-j}}(\mr^3))} ||\zed||_{L^5((-\infty,T];L^{10}(\mr^3))}^j + C ||\zed||_{L^5((-\infty,T];L^{10}(\mr^3))}^5\right).
\end{gathered}\label{boundzed1}
\end{gather}

Equation \eqref{mcet0} for $T<T_0$ follows from the following  observation, which resembles Lemma 2.2 of \cite{BahGer}:
\begin{lemma}\label{poly-ineq} Let $M(T)\geq 0$ be a continuous function on and interval $I=(-\infty,\beta],$ $I=[\alpha,\beta]$  or $I=[\alpha,\infty),$ and suppose that  either that $\lim_{T\rightarrow -\infty} M(T)=0$ or that $M(\alpha)=0.$   If 
\begin{gather}
M(T)\leq a + \sum_{j=2}^5 C_j M(T)^j, \;\  a>0, \; C_j>0,
\text{ are such that } \sum_{j=2}^5 2^j C_j a^{j-1}<1, \label{eq-M}
\end{gather}
then  $M(T) \leq 2 a.$
\end{lemma}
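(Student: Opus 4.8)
The plan is to run a standard continuity (bootstrap) argument on the interval $I$, relying only on the continuity of $M$, the scalar inequality \eqref{eq-M}, and the hypothesis that $M$ vanishes at the left endpoint of $I$ (or as $T\to-\infty$). No use is made of the wave equation \eqref{eq-for-zed}; as with Lemma 2.2 of \cite{BahGer}, this is purely a statement about continuous functions satisfying a polynomial inequality.

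First I would record the elementary algebraic fact behind the smallness assumption. Set $Q(x)=a+\sum_{j=2}^{5}C_j x^j$, so that \eqref{eq-M} reads $M(T)\le Q(M(T))$. For $0\le x\le 2a$ we have $x^{j}\le (2a)^{j-1}x$, hence $Q(x)\le a+\big(\sum_{j=2}^{5}C_j(2a)^{j-1}\big)x=a+\ha\big(\sum_{j=2}^{5}2^{j}C_j a^{j-1}\big)x<a+\ha x$, using the hypothesis $\sum_{j=2}^{5}2^{j}C_j a^{j-1}<1$. In particular $Q(2a)<a+a=2a$.

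Next I would show that $S:=\{T\in I:\ M(T)\le 2a\}$ equals all of $I$. Since $M$ is continuous, $S=M^{-1}([0,2a])$ is closed in $I$. It is also open: I claim $S=M^{-1}([0,2a))$. Indeed, if $M(T)\le 2a$ then \eqref{eq-M} gives $M(T)\le Q(M(T))$, and were $M(T)=2a$ this would force $2a\le Q(2a)<2a$, a contradiction; hence $M(T)<2a$. Thus $S$ is simultaneously open and closed in the connected set $I$. Finally $S\ne\emptyset$: in the case $M(\alpha)=0$ we have $\alpha\in S$, and in the case $\lim_{T\to-\infty}M(T)=0$ we have $M(T)<2a$ for all sufficiently negative $T$, so again $S\ne\emptyset$. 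Therefore $S=I$, i.e. $M(T)\le 2a$ for all $T\in I$.

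The argument is short and presents no serious obstacle; the only delicate point is the role of the vanishing hypothesis on $M$, which seeds the connectedness argument and is genuinely necessary — the polynomial $x\mapsto x-Q(x)$ has further zeros beyond $2a$, and a constant function $M$ equal to such a zero satisfies \eqref{eq-M} while violating the conclusion.
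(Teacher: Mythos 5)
Your proof is correct and rests on the same mechanism as the paper's: the smallness hypothesis gives $Q(2a)<2a$ (the paper phrases this as $p(2a)>0$ for $p(x)=x-Q(x)$), and then continuity of $M$ together with the vanishing hypothesis at the left end of $I$ prevents $M$ from ever reaching the value $2a$. The only difference is packaging: the paper argues on the range of $M$, bounding it by the smallest positive root $x^*<2a$ of $p$, whereas you run the equivalent bootstrap on the domain via the clopen set $\{T\in I:\ M(T)\le 2a\}$ — a slightly more careful rendering of the same continuity argument.
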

\begin{proof} Set $x=M(T)$ and consider the polynomial $p(x)= x-(a+ C_2 x^2+ C_3 x^3+ C_4 x^4+ C_5 x^5).$  Obviously $p(0)=-a<0$ and the second condition in \eqref{eq-M} guarantees that $p(2a)>0.$  Since $M(T)$ is continuous and $M(\alpha)=0$ and the roots of $p(x)$ are discrete, $M(T)\leq x^*,$ where $x^*$ is the smallest value of $x>0$ for which the graph of $p(x)$ crosses the real axis. Then $M(T)\leq x^*< 2a.$  The same argument applies to the case $\lim_{T\rightarrow -\infty} M(T)=0.$
\end{proof}
We apply    \eqref{eq-M} to \eqref{boundzed1} with
\begin{gather*}
M(T)=||\zed||_{L^5((-\infty,T); L^{10}(\mr^3)}, \;\ a=4 C_{10} ||R||_{L^1(\mr; L^2(\mr^3))}, \text{ and } \\
C_j=4C_{10}||V_j||_{L^{\frac{5}{5-j}}(\mr;L^{\frac{10}{5-j}}(\mr^3))}, \;\ 1\leq j \leq 4, \;\ C_5=4C_{10}C.
\end{gather*}
But, because of \eqref{estR} $a\leq C|\veps|^5,$ with $C$ independent of $T,$ and so we can pick $\eps_0>0$ such that $\sum_{j=2}^5 2^j C_j a^{j-1}<1$ for $\eps<\eps_0,$ and for this choice of $\eps_0,$ 
\begin{gather}
||\zed||_{L^5((-\infty,T]; L^{10}(\mr^3)} \leq C |\veps|^5, \text{ provided }T\leq T_0, \text{ and } |\veps|<\eps_0. \label{sest0}
\end{gather}

We substitute estimate \eqref{sest0} in \eqref{Aux-eq-for-zed} and we conclude that
\begin{gather}
E_0(Z,\p_t Z)(T) \leq K_0 |\veps|^5,  \text{ provided }T\leq T_0, \text{ and } |\veps|<\eps_0. \label{EST-E0}
\end{gather}

Given  $T_1>T_0,$   as in the proof of Proposition \ref{sol123}, we partition the interval $[T_0,T_1]$ into $N$ intervals 
$I_j=[a_j,b_j],$ $a_0=T_0$ and $b_N=T_1$ such that

\begin{gather}
||V_1||_{L^{\frac{5}{4}}(I_j;L^{\frac{5}{2}}(\mr^3))}<\del, \text{ with } \del  \text{ as in \eqref{Ch-del}}.\label{boundvj}
\end{gather}

Strichartz estimate \eqref{STES} gives that for $t\in [a_0,a_1],$ and $K_0$ as in \eqref{EST-E0},
\begin{gather}
\begin{gathered}
||\zed||_{L^5([a_0,t]; L^{10}(\mr^3))} +E_0(\zed, \p_t \zed)(t)  \leq  2C_{10} ||R||_{L^1([a_0,a_1]; L^2(\mr^3))}+ 2C_{10} K_0\eps^5+ \\
 2 C_{10}\left(\sum_{j=1}^4 ||V_j||_{L^{\frac{5}{5-j}}([a_0,t];L^{\frac{10}{5-j}}(\mr^3))} ||\zed||_{L^5(([a_0,t];L^{10}(\mr^3))}^j + C ||\zed||_{L^5([a_0,T];L^{10}(\mr^3))}^5\right).
\end{gathered}\label{boundzed11}
\end{gather}
As above, we use \eqref{boundvj} and \eqref{Ch-del} to absorb the term $2C_{10} ||V_1||_{L^{\frac{5}{4}}([a_0,t];L^{\frac{5}{2}}(\mr^3))} ||\zed||_{L^5(([a_0,t];L^{10}(\mr^3))}$ onto the left hand side,
and we conclude that
\begin{gather}
\begin{gathered}
|\zed||_{L^5([a_0,t]; L^{10}(\mr^3))} +E_0(\zed, \p_t \zed)(t)  \leq  2C_{10} ||R||_{L^1([a_0,a_1]; L^2(\mr^3))}+ 2C_{10} K_0\eps^5+ \\
 2 C_{10}\left(\sum_{j=2}^4 ||V_j||_{L^{\frac{5}{5-j}}([a_0,t];L^{\frac{10}{5-j}}(\mr^3))} ||\zed||_{L^5(([a_0,t];L^{10}(\mr^3))}^j + C ||\zed||_{L^5([a_0,T];L^{10}(\mr^3))}^5\right).
\end{gathered}\label{boundzed11-Copy}
\end{gather}
Another application of \eqref{estR} and Lemma \ref{poly-ineq} shows that there exists $\eps_0$ and $C>0$ such that for $\eps_j<\eps_0,$ 
\begin{gather*}
||\zed||_{L^5([a_0,t]; L^{10}(\mr^3))}\leq  C |\veps|^5 ,  \;\ t\in [a_0,a_1],
\end{gather*}
and substituting this in \eqref{boundzed11} gives
\begin{gather*}
E_0(\zed, \p_t \zed)(t) \leq K_1 |\veps|^5 , \text{ provided }  t\leq a_1, \; |\veps|<\eps_0
\end{gather*}
 We repeat the same argument for the interval $[a_1,a_2]$ and subsequent intervals.  The key point here is that with $\delta$ such that $2C_{10}\del<\ha,$ one can absorb the term 
 \begin{gather*}
2C_{10} ||V_1||_{L^{\frac{5}{4}}([a_j,t];L^{\frac{5}{2}}(\mr^3))} ||\zed||_{L^{5}([a_j,t];L^{10}(\mr^3))}
\end{gather*}
onto the left hand side and arrive at an estimate just like \eqref{boundzed11} for the interval $[a_j,a_{j+1}].$ The choice of $\del$ remains fixed.  We conclude that there exists $\eps_0=\eps_0(N),$  and $C=C(N)>0$ such that
\begin{gather*}
 ||\zed||_{L^5([a_j,t]; L^{10}(\mr^3)}+E_0(\zed, \p_t \zed)(t) \leq C |\veps|^5, \text{ provided }  a_j\leq t \leq a_{j+1} \text{ and } |\veps|<\eps_0. \label{sest01}
 \end{gather*}
Grouping these terms together we find that there exists $\eps_0=\eps_0(N)>0$ and $C=C(N)$ such that
 \begin{gather}
 \begin{gathered}
  ||\zed||_{L^5([T_0,T]; L^{10}(\mr^3)} \leq C |\veps|^5 \text{ and } 
  E_0(\zed, \p_t \zed)(T) \leq K |\veps|^5, \\
\text{ provided }  T_0 \leq t \leq T_1 \text{ and } |\veps|<\eps_0.
 \end{gathered}\label{sest011}
 \end{gather}

This process seems to fail, as the constants depend on $N,$ and we cannot let $T_1\nearrow \infty,$ but the point is that we can pick $T_1$ such that
\begin{gather}
||V_1||_{L^{\frac{5}{4}}([T_1,\infty)];L^{\frac{5}{2}}(\mr^3))}<\del.\label{boundv2}
\end{gather}

Then Strichartz estimate \eqref{STES} shows that, for $K$ as in \eqref{sest011},
\begin{gather}
\begin{gathered}
||\zed||_{L^5([T_1,T]; L^{10}(\mr^3)} +E_0(\zed, \p_t \zed)(T) \leq  2C_{10} ( ||R||_{L^1([T_1,T]; L^2(\mr^3))}+K |\veps|^5)+  \\\
2 C_{10}\left(\sum_{j=1}^4 ||V_j||_{L^{\frac{5}{5-j}}([T_1,T];L^{\frac{10}{5-j}}(\mr^3))} ||\zed||_{L^5([T_1,T];L^{10}(\mr^3))}^j + C ||\zed||_{L^5([T_1,T];L^{10}(\mr^3))}^5\right).
\end{gathered}\label{boundzed1-N}
\end{gather}
Because $2C_{10}\del<\ha$ and \eqref{boundv2} holds,  we can absorb the term 
$||V_1||_{L^{\frac{5}{4}}([T_1,T];L^{\frac{5}{2}}(\mr^3))} ||\zed||_{L^5([T_1,T];L^{10}(\mr^3))}$ onto the left hand side and conclude that
\begin{gather}
\begin{gathered}
||\zed||_{L^5([T_1,T]; L^{10}(\mr^3))} \leq  2 C_{10} ( ||R||_{L^1([T_1,T]; L^2(\mr^3))}+K |\veps|^5)+  \\
4 C_{10}\left(\sum_{j=2}^4 ||V_j||_{L^{\frac{5}{5-j}}([T_1,T];L^{\frac{10}{5-j}}(\mr^3))} ||\zed||_{L^5([T_1,T];L^{10}(\mr^3))}^j + C ||\zed||_{L^5([T_1,T];L^{10}(\mr^3))}^5\right).
\end{gathered}\label{boundzed2}
\end{gather}

Again we apply \eqref{estR} and Lemma \ref{poly-ineq} to conlcude that that  there exist $C>0$ and $\eps_0>0$ such that for $\eps<\eps_0$ 
\begin{gather}
||\zed||_{L^5([T_1,T]; L^{10}(\mr^3)} \leq C |\veps|^5, \text{ provided }  T_1 \leq  T  \text{ and } |\veps|<\eps_0.\label{SEST-B1}
\end{gather}
Substituting this into \eqref{boundzed1-N} and using \eqref{estR} we conclude that
\begin{gather}
  E_0(\zed, \p_t \zed)(T) \leq K |\veps|^5, \text{ provided }  T_1 \leq  T  \text{ and } |\veps|<\eps_0. \label{SEST-B2}
  \end{gather}

Combining \eqref{sest0}, \eqref{sest011},  \eqref{SEST-B1} and \eqref{SEST-B2} we deduce that there exists $\eps_0>0$ such that
 \begin{gather}
 \begin{gathered}
  ||\zed||_{L^5((-\infty,T]; L^{10}(\mr^3)} \leq C |\veps|^5 \text{ and } 
  E_0(\zed, \p_t \zed)(T) \leq K |\veps|^5, \text{ for all } T \in \mr, \text{ provided }  |\veps|<\eps_0.
 \end{gathered}\label{SEST-B3}
 \end{gather}

This proves the first estimate in \eqref{mcet0}, so it remains to prove the second estimate in \eqref{mcet0}.   But once we have \eqref{SEST-B3} we use \eqref{est-prodvw}  to show that
 \begin{gather*}
 ||\square \zed||_{L^1((-\infty,T]; L^{2}(\mr^3)} \leq ||\La(\zed)||_{L^1((-\infty,T]; L^{2}(\mr^3)} +  ||R||_{L^1((-\infty,T]; L^{2}(\mr^3)} \leq \\
 \sum_{j=1}^4 ||V_j||_{L^{\frac{5}{5-j}}((-\infty,T];L^{\frac{10}{5-j}}(\mr^3))} ||\zed||_{L^5((-\infty,T];L^{10}(\mr^3))}^j + \\ C ||\zed||_{L^5((-\infty,T];L^{10}(\mr^3))}^5 +  ||R||_{L^1((-\infty,T]; L^{2}(\mr^3)}\leq C |\veps|^5.
 \end{gather*}
 This proves the second estimate in \eqref{mcet0} and it concludes the proof of Proposition \ref{solw4}.
\end{proof}

\section{Radiation Fields of Conormal Spherical Waves}

  As discussed in Section \ref{outline} pick $\Upsilon_0\in C_0^\infty(\mr\times \ms^2)$  to be radial, or independent of $\omega,$ and we pick $\Upsilon_j,$ $j=1,2,3,4$  of the form
\begin{gather}
\begin{gathered}
\Ups_j(s,\omega)= \p_s\left[ (s-s_j- \lan \omega,z_j\ran)_+^m \chi(s-s_j- \lan \omega,z_j\ran)\right],  \; j=1,2,3,4, \;\ m>0, \\
\chi \in C_0^\infty(\mr) \text{ supported in } [-1,1],
\end{gathered}\label{def-upsj}
\end{gather}
which correspond to the backward radiation field of four spherical waves.    Indeed, notice that 
\begin{gather*}
%\zeta_j^+(t,z) =  |z-z_j|^{-1} (t+s_j- |z-z_j|)_+^{m} \chi_j(t-s_j-|z-z_j|),\\
\zeta_j^-(t,z) =  |z-z_j|^{-1} (t-s_j+ |z-z_j|)_+^{m} \chi(t-s_j+|z-z_j|), 
\end{gather*}
for $j=1,2,3,4,$ satisfy $\square \zeta^-_j=0.$ Also notice that if $z=r \omega,$
\begin{gather*}
|z-z_j|= \left(r^2-2r\lan \omega, z_j \ran + |z_j|^2\right)^\ha= r + O(1), \\
t-s_j+|z-z_j|= t-s_j+ \left( r^2+|z_j|^2-2r \lan \omega, z_j\ran\right)^\ha= t-s_j+ r\left(1-\frac{2}{r}\lan \omega, z_j\ran +\frac{|z_j|^2}{r^2}\right)^\ha=\\
t-s_j+ r-\lan \omega,z_j\ran +O(r^{-1}),
\end{gather*}
and therefore, for $j=1,2,3,$
\begin{gather}
\begin{gathered}
%\mcn_+ \zeta_j^+(s,\omega)=  \lim_{r\rightarrow \infty} r \p_t \zeta_j(s+r,r\omega)=
%\p_s\left((s+s_j- \lan \omega,z_j\ran)_+^{m} \chi_j(s-s_j+ \lan \omega,z_j\ran)\right)= \Ups_j(s,\omega), \\
\mcn_- \zeta_j^-(s,\omega)=  \lim_{r\rightarrow \infty} r \p_s \zeta_j(s-r,r\omega)=
\p_s\left((s-s_j- \lan \omega,z_j\ran)_+^{m} \chi(s-s_j- \lan \omega,z_j\ran)\right)= \Ups_j(s,\omega).
\end{gathered}\label{radfpm}
\end{gather}

But we need to  analyze the singularities of the solutions $w_j(y)$ to \eqref{defwj} with backward radiation field given by $\Ups_j.$  Recall that from the discussion in Section \ref{outline}, with a suitable choice of $\Ups_0,$  $u_0(t,x)\in C^\infty(\mr\times \mr^3),$ where $u_0(t,x)$ is the solution of \eqref{rad-u0}.

 Given $u_0,$ we want to show that the solution to \eqref{defwj},  has a singularity expansion of the form
\begin{gather}
\begin{gathered}
w_j^-(t,z)\sim \sum_{k=0}^\infty \beta_k^-(z)\chi(t-s_j+|z-z_j|) |z-z_j|^{-1} \left(t-s_j+ |z-z_j|\right)_-^{m+k},\;
  \;\ \beta_0^-(z)=1, \text{ for } t< s_j, \\
  w_j^+(t,z) \sim \sum_{k=0}^\infty \beta_k^+(z)\chi(t+s_j-|z-z_j|) |z-z_j|^{-1} \left(t-s_j+ |z-z_j|\right)_+^{m+k},\;
  \;\ \beta_0^+(z)=1, \text{ for } t> s_j.
\end{gathered} \label{sing-expw}
\end{gather}
In terms of singularities, this corresponds to a spherical wave coming from $t=-\infty,$ which collapses to a point at 
$\{t=s_j\}$ and re-emerges as a spherical wave for $t>0.$  Friedlander \cite{Fried2} named this a hourglass wave.

  Since we are only concerned with the singularities on the spherical wave, we treat the cut off $\chi$ as being equal to one.

 Away from $z=z_j,$ we can write
\begin{gather}
\begin{gathered}
f'(u_0)\sim \sum_{k=0}^\infty \gamma_k^-(z) \left(t-s_j+ |z-z_j|\right)^{k}, \;\ t< s_j, \\
f'(u_0)\sim \sum_{k=0}^\infty \gamma_k^+(z)  \left(t-s_j- |z-z_j|\right)^{k}, \;\ t> s_j.
\end{gathered}\label{exp-fu0} 
\end{gather}
We substitute this expansion into the wave equation \eqref{defwj} and also  use the facts that 
\begin{gather*}
\square\left( |z-z_j|^{-1}\left(t- s_j+ |z-z_j|\right)_\pm^{k}\right)=0, \text{ for any } k \in \mn, 
\end{gather*}
 and that $\beta_0^-=1.$  To simplify the notation, we  denote
 \begin{gather*}
 \vtheta_{j-}^{k}= |z-z_j|^{-1}\left(t-s_j+ |z-z_j|\right)_-^{k}.
 \end{gather*} 
 
\begin{gather*}
(\square + f'(u_0)) w_j^-\sim \square \sum_{k=0}^\infty \beta_k^-\;  \vtheta_{j-}^{m+k}+ 
 \sum_{l+k=0}^\infty \gamma_l^- \beta_k^- \vtheta_{j-}^{m+k+l} = \\
- \sum_{k=1}^\infty \left(\Delta\beta_k^--  2\frac{z-z_j}{|z-z_j|^2}\cdot \nabla_z \beta_k^- \right) \vtheta_{j-}^{m+ k}
  \;\ - \\ \sum_{k=1}^\infty 2(m+k) \left(\frac{z-z_j}{|z-z_j|}\cdot \nabla_z\beta_k^- \right)\vtheta_{j-}^{m+k-1} \; + 
  \sum_{l+k=0}^\infty \gamma_l \beta_k^-\; \vtheta_{j-}^{m+k+l}, 
  \end{gather*}
 Next we  introduce  polar coordinates with vertex at $z_j,$ and use $r_j$ to denote the distance to $z_j,$  and $\theta= \frac{z-z_j}{|z-z_j|},$ so $\frac{z-z_j}{|z-z_j|}\cdot \nabla_z=\p_{r_j},$  and we obtain
  \begin{gather*}
  (\square + f'(u_0)) w_j^-\sim   (\ga_0^--2(m+1) \p_{r_j} \beta_1^-)  \vtheta_{j-}^{m}+ \\
  \sum_{k=1}^\infty \left[ 2(m+k+1) \p_{r_j} \beta_{k+1}^- +( \Delta-\frac{2}{r_j} \p_{r_j})\beta_{k}^- +\sum_{l+\mu=k} \ga_l^- \beta_{\mu}^-\right]\vtheta_{j-}^{m+k}.
  \end{gather*}

%Similarly, we obtain 
%\begin{gather*} 
%(\square + f'(u_0)) w_j^+\sim  (\ga_0^+-2(m+1) \p_{r_j} \beta_1^+) \left(t+s_j- |z-z_j|\right)_+^{m}- \\
 % \sum_{k=1}^\infty \left[ 2(m+k+1)\p_{r_j} \beta_{k+1}^+ +\Delta\beta_{k}^+ +\sum_{l+\mu=k} \ga_l^+ \beta_{\mu}^+\right]\left(t+s_j- |z-z_j|\right)_+^{m+k}.
 % \end{gather*}
  This gives a series of transport equations for the coefficients $\beta_k^-:$
  \begin{gather}
  \begin{gathered}
  \ga_0^--2(m+1) \p_{r_j} \beta_1^-=0, \;\ \lim_{r_j\rightarrow \infty} \beta_1(r_j,\theta)=0, \\
   2(m+k+1)\p_{r_j} \beta_{k+1}^- +(\Delta-\frac{2}{r_j} \p_{r_j})\beta_{k}^- +\sum_{l+\mu=k} \ga_l^- \beta_{\mu}^-=0, \;\ \lim_{r\rightarrow \infty} \beta_{k+1}(r_j,\theta)=0, \;\ k\geq 1.
\end{gathered}\label{transp-eq-}
      \end{gather}
      
For $t>s_j$ one has the expansion
\begin{gather}
\begin{gathered}
w_j^+(t,z)\sim \sum_{k=0}^\infty \beta_k^+(r\omega)\chi(t-s_j-|z-z_j|) |z-z_j|^{-1} \left(t-s_j- |z-z_j|\right)_-^{m+k}, \;\ \beta_0^+=1.
\end{gathered} \label{sing-expw+}
\end{gather}
The coefficients of the expansion $\beta_k^+(z)$ satisfy a similar set of transport equations:
\begin{gather}
  \begin{gathered}
 \beta_0^+ \ga_0^+-2(m+1) \p_{r_j} \beta_1^+=0, \;\ \beta_1^+(z_j)=\beta_1^-(z_j),\\
   2(m+k+1)\p_{r_j} \beta_{k+1}^+ +(\Delta-\frac{2}{r_j} \p_{r_j})\beta_{k}^+ +\sum_{l+\mu=k} \ga_l^+ \beta_{\mu}^+=0, \;\
   \beta_{k+1}^+(z_j)=\beta_{k+1}^-(z_j),\\
\end{gathered}\label{transp-eq+}
\end{gather}
Here $\ga_k^+$ correspond to the analogue coefficients of the expansion \eqref{exp-fu0}.
The scattering operator for the operator $\square+f'(u_0),$ $\mca$ satisfies, for any   $N\in \mn,$
\begin{gather}
\begin{gathered}
\mca(\Ups_j)(s,\omega)- \lim_{r\rightarrow \infty} \sum_{k=0}^N \beta_{k}^+(z) \p_s\left((s-s_j+\lan z_j,\omega\ran)_+^{m+k} \; \chi(s-s_j+\lan z_j,\omega\ran\right)\in C^N(\mr\times \ms^2), 
\end{gathered}\label{exp-smatrix}
\end{gather}

\section{Propagation of Singularities for Radiation Fields}

 We will  need a general result about propagation of singularities for radiation fields and we recall a result which is essentially contained in the the work of S\'a Barreto and Wunsch \cite{SaBWun}:
\begin{prop}\label{new-sing-111}   Let $\La\subset T^*\mr^4\setminus 0$ be a $\CI$ conic Lagrangian submanifold which is contained in 
$\{p=\sigma_2(\square)=0\}.$
Let $(t,z),$ $z=(x_1,x_2,x_3)\in \mr^3,$ be coordinates in $\mr^4.$ Let $r=|z|$ and $\omega \in \ms^2,$ $\omega=\frac{z}{|z|}.$ Let $x=\frac{1}{r},$ $s=t-\frac{1}{x}$ where $r=|z|,$ $z=r\omega.$  Let $\widetilde{\La}$ denote the image of $\La$ under this coordinate change.  Then $\widetilde{\La}$ intersects  $\{x=0\}$ transversally and  
$\widetilde{\La}$ extends as a $C^\infty$ conic Lagrangian submanifold of 
$T^*U\setminus 0,$ where $U=(-\eps,\eps)_x\times \mr_s\times \ms^2_{\omega}.$ Moreover,
$\widetilde{\La}\cap \{x=0\}= \La_\infty$ is a $C^\infty$ conic Lagrangian submanifold of $T^*(\mr_s\times \ms_\omega^2)\setminus 0.$

If locally in $T^*U\setminus 0$ and near $\{x=0\},$ $\widetilde \La=N^*\widetilde \Sigma,$ where $\widetilde\Sigma\subset U,$ is a $C^\infty$ hypersurface, 
then $\widetilde\Sigma$ intersects $\{x=0\}$ transversally at $\Sigma_{\infty}=\widetilde\Sigma\cap \{x=0\},$ and $\La_\infty=N^*\Sigma_\infty.$ We shall call
$\Sigma_\infty$ and $\La_\infty$ respectively  the forward radiation patterns of $\Sigma$ and $\La.$

If $u \in I^{m}(U,\widetilde\La)$ is a Lagrangian distribution, then 
$u|_{x=0}\in I^{m+\oq}(\mr\times \ms^2, \widetilde{\La}_\infty)$  and its symbol is equal to 
$\sigma(u)|_{\widetilde{\La}_\infty}.$ If $u$ is elliptic, so is $u|_{\{x=0\}}.$
\end{prop}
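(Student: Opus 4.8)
The plan is to prove Proposition~\ref{new-sing-111} by a direct analysis of the coordinate change $(t,z)\mapsto(x,s,\omega)$, $x=1/r$, $s=t-1/x=t-r$, on the cotangent bundle, checking that the characteristic variety of $\square$ pulls back to a smooth hypersurface meeting $\{x=0\}$ transversally, and then invoking the standard change-of-variables behavior of Lagrangian distributions under a diffeomorphism. I would organize it in four steps.

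\textbf{Step 1: the symbol computation.} First I would write the Minkowski d'Alembertian in the coordinates $(t,r,\omega)$, namely $\square=\p_t^2-\p_r^2-\tfrac{2}{r}\p_r-\tfrac{1}{r^2}\Delta_{\ms^2}$, and pass to $(x,s,\omega)$ via $r=1/x$, $t=s+1/x$. Since $\p_t=\p_s$ and $\p_r=-x^2\p_x-x^2\p_s$ (computing $\p r/\p x$ etc.), the principal symbol $p=\tau^2-\rho^2$ in $(t,r)$-variables becomes, in dual variables $(\sigma,\xi,\eta)$ to $(s,x,\omega)$, something of the form $p=\sigma^2-(x^2\xi+x^2\sigma)^2-x^2|\eta|^2_\omega$, i.e.\ $p=x^2\big(2\sigma\xi+x^2\xi^2+2x^2\sigma\xi-\cdots-|\eta|^2\big)$ after collecting; the essential point is that after removing the overall elliptic factor $x^2$, the reduced symbol $\tilde p$ satisfies $\p\tilde p/\p\xi=2\sigma\neq0$ on the characteristic set away from $\sigma=0$ (and $\sigma\neq0$ there because $\sigma=0$ forces $\eta=0$ too, which is off the zero section). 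This is exactly the transversality of $\{\tilde p=0\}$ to $\{x=0\}$ in the fibers, hence $\widetilde\La\subset\{\tilde p=0\}$ meets $\{x=0\}$ transversally and extends smoothly across it; this is the content already extracted from \cite{SaBWun}, so I would state it and give just the symbol bookkeeping.

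\textbf{Step 2: $\widetilde\La_\infty$ is Lagrangian.} Given that $\widetilde\La$ is a smooth conic Lagrangian in $T^*U\setminus0$ transversal to $\{x=0\}$ and contained in $\{\xi$ determined by the other variables$\}$ (since $\p\tilde p/\p\xi\ne0$ lets us solve $\xi=\xi(s,x,\omega,\sigma,\eta)$ on the characteristic set), the restriction $\widetilde\La\cap\{x=0\}$ is a smooth conic submanifold of $T^*(\mr_s\times\ms^2_\omega)\setminus0$ of the right dimension; that it is isotropic follows because the symplectic form on $T^*U$ restricted to $\{x=0\}$ pulls back to the symplectic form on $T^*(\mr_s\times\ms^2)$, and $\widetilde\La$ is isotropic, so its transversal slice is isotropic of half dimension, hence Lagrangian. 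For the conormal case, if $\widetilde\La=N^*\widetilde\Sigma$ locally with $\widetilde\Sigma$ a smooth hypersurface, transversality of $\widetilde\La$ to $\{x=0\}$ is equivalent to $\widetilde\Sigma$ meeting $\{x=0\}$ transversally (a hypersurface not tangent to $\{x=0\}$), and then $N^*(\widetilde\Sigma\cap\{x=0\})=N^*\widetilde\Sigma\cap\{x=0\}=\widetilde\La_\infty$ by the standard restriction formula for conormal bundles of transversally intersecting submanifolds.

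\textbf{Step 3: restriction of Lagrangian distributions and the symbol.} This is the routine-but-careful part: for $u\in I^m(U,\widetilde\La)$ with $\widetilde\La$ transversal to the hypersurface $\{x=0\}$, the restriction $u|_{x=0}$ is well-defined and lies in $I^{m+1/4}(\mr\times\ms^2,\widetilde\La_\infty)$, with principal symbol obtained by restricting $\sigma(u)$ to $\widetilde\La_\infty$ (up to the usual density/Maslov factors, which here are trivial because everything is conormal or can be trivialized). The $+1/4$ shift is the standard one in the restriction theorem for Lagrangian distributions — it appears because the restriction to a codimension-one submanifold transversal to $\widetilde\La$ changes the order by $1/4$ (this is, e.g., in H\"ormander vol.\ 4 / Duistermaat, and is exactly the shift exploited throughout \cite{BasSaB,SaBWun}). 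I would either cite this or prove it in a local model by taking $\widetilde\La$ conormal to $\{x=s\}$ (the model light-cone face), writing $u$ as an oscillatory integral $\int e^{i(s-x)\xi}a(s,x,\omega,\xi)\,d\xi$, setting $x=0$, and reading off that the amplitude loses a half-power of $\xi$ relative to the phase-space dimension count, which translates to the $1/4$ order shift. Ellipticity of the symbol is preserved under restriction to $\widetilde\La_\infty$ because $\widetilde\La_\infty\subset\widetilde\La$ and the symbol is non-vanishing on all of $\widetilde\La$.

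\textbf{Main obstacle.} The genuinely delicate point is Step~3 — pinning down the $I^{m+1/4}$ order and the precise symbol identification — because one must be careful about (a) the half-density factors in the definition of the symbol of a Lagrangian distribution and how they transform under restriction, and (b) whether the relevant transversality hypothesis (the fiber-transversality of $\widetilde\La$ to $\{x=0\}$, established in Step~1) is exactly what the restriction theorem requires. The computational Steps~1--2 are bookkeeping, but Step~3 is where the quarter-index shift that is used repeatedly in the rest of the paper actually enters, so I would make sure to either give a clean local-model derivation or cite the precise statement (the restriction theorem for conormal/Lagrangian distributions transversal to a hypersurface) and match hypotheses carefully. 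Since the proposition is attributed as ``essentially contained in'' \cite{SaBWun}, I expect the write-up to be short: state Steps~1--2 with the symbol computation, and for Step~3 reduce to a conormal local model and invoke the standard restriction theorem, noting the order shift.
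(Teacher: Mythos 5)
Your proposal follows essentially the same route as the paper: compute the symbol of (a rescaling of) $\square$ in the coordinates $(x,s,\omega)$, observe that on the characteristic set near $x=0$ the $\xi$-derivative of the reduced symbol is $2\mu\neq 0$ (with $\mu$ dual to $s$), deduce transversality and smooth extension of $\widetilde\La$ across $\{x=0\}$, and obtain the $\oq$ shift by restricting an oscillatory-integral representation of $u$ to $x=0$ and counting dimensions, with the symbol of $u|_{x=0}$ given by $\sigma(u)|_{\La_\infty}$. Your Step 2 (isotropy of the slice and the conormal-bundle identification $\La_\infty=N^*\Sigma_\infty$) is in fact spelled out more explicitly than in the paper.

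Two small points to repair. First, a computational slip: with $r=1/x$, $t=s+1/x$, one has $\p_t=\p_s$ and $\p_r=-x^2\p_x-\p_s$ (not $-x^2\p_x-x^2\p_s$), so the principal symbol becomes $\tau^2-\rho^2-x^2|\eta|^2=-x^2\left(2\mu\xi+x^2\xi^2+|\eta|^2\right)$; your stated conclusion $\p_\xi\tilde p=2\mu$ at $x=0$ is the correct one, but the intermediate formula should be fixed. Second, and more substantively, transversality of the characteristic variety $\{\tilde p=0\}$ to $\{x=0\}$ does not by itself give transversality of the Lagrangian $\widetilde\La\subset\{\tilde p=0\}$ to $\{x=0\}$ (a submanifold of a transversal hypersurface can still be tangent to $\{x=0\}$). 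The bridge, which is exactly how the paper and \cite{SaBWun} argue, is that a conic Lagrangian contained in $\{\tilde p=0\}$ has $H_{\tilde p}$ tangent to it, and the quantity you computed, $\p_\xi\tilde p=2\mu$, is precisely the $\p_x$-component of $H_{\tilde p}$; since $\mu=\tau\neq 0$ on the characteristic set of $\square$ away from the zero section, $H_{\tilde p}$ is transversal to $\{x=0\}$ and its tangency to $\widetilde\La$ yields both the smooth extension across $\{x=0\}$ (as the flow-out of $\widetilde\La$) and the transversality you need, as well as the splitting $T_p\widetilde\La=T_p\La_\infty\oplus\mbr H_{\tilde p}(p)$ used for the conormal case. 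With that one-sentence fix, your argument matches the paper's proof.
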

\begin{proof} Written in the variables $(s,r,\omega),$ $s=t-r,$  as $r\rightarrow \infty,$ the operator $P= r \square r^{-1}$ becomes
\begin{gather*}
P= \p_{r}(2\p_s-\p_r) -\frac{1}{r^2} \Delta_\omega,
\end{gather*}
where $\Delta_\omega$ is the Laplacian on $\ms^2.$ If one sets $x=r^{-1},$ then 
\begin{gather*}
-x^{-2} P= \mcp=\p_x( 2\p_s+ x^2 \p_x) + \Delta_\omega.
\end{gather*}
This is a compactification of $\mr^n\setminus 0,$ which has a natural extension to a neighborhood of $\{x=0\}.$

The Hamilton vector field of $\varrho=\sigma_2(\mcp)=2\mu\xi+x^2\xi^2+ h(\omega,\varkappa),$ where $\xi$ is the dual variable to $x,$ $\mu$ is the dual to $s,$  and $\varkappa$ is the dual to $\omega,$ is given by
\begin{gather*}
H_{\varrho}= 2(\mu+x^2\xi) \p_x+ 2\xi \p_s +H_{h}.
\end{gather*}
But on the characteristic variety of $\square,$ and away from the zero section of $T^* \mr^4,$ $\tau^2\not=0.$ But $\tau=\mu,$ and so we conclude that near $x=0,$ and on $\{\varrho=0\},$ $H_{\varrho}$ is transversal to $\{x=0\}.$
So, as observed by S\'a Barreto and Wunsch \cite{SaBWun},  since $\widetilde\La\subset \{\varrho=0\},$  $H_{\varrho}$ is tangent to $\widetilde\La,$ and so  the Lagrangian submanifold $\widetilde\La$ extends across $\{x=0\}$ as the union of integral curves of $H_\varrho$ which start on $\widetilde{\La}$ in $x<0.$ $\widetilde{\La}$ is a   $C^\infty$ Lagrangian submanifold of $T^*U.$ 

Suppose that $\widetilde\La=N^* \widetilde\Sigma,$ then the projection $\Pi: \widetilde\La\longmapsto U$ is a diffeomorphism. But then, since $H_\varrho$ is transversal to $\{x=0\}$ and tangent to $\widetilde\La,$ for a point $p\in \widetilde\La\cap \{x=0\},$ the tangent space 
$T_{p} \widetilde \La=T_p \La_\infty\oplus H_\varrho(p).$ Therefore, the projection $\Pi_\infty: \La_\infty \longmapsto \{x=0\}$ is a diffeomorphism.

Suppose that  $u \in I^{m}(\mr^4,\widetilde\La),$ is a Lagrangian distribution, then near $p\in \{x=0\},$ $u$ is given by an oscillatory integral
\begin{gather*}
u(x,s,\omega)= \int_{\mr^N} e^{i\phi(x,s,\omega,\theta)} a(x,s,\omega,\theta) d\theta, \; 
a\in S^{m+1-\frac{N}{2}} (U \times \mr^N_{\theta}),
\end{gather*}
where $\phi(x,s,\omega,\theta)$ parametrizes $\widetilde\La$ in the sense that

\begin{gather*}
\p_\theta \phi=0, \;\ \p_x\phi=\xi, \;\ \p_s \phi=\mu, \;\ \p_\omega \phi=\varkappa.
\end{gather*}

Then of course, $\phi(0,s,\omega,\theta)$ parametrizes $\La_\infty$ and  
\begin{gather*}
u(0,s,\omega)= \int_{\mr^N} e^{i\phi(0,s,\omega,\theta)} a(0,s,\omega,\theta) d\theta, \; 
a\in S^{m+1-\frac{N}{2}} (U \times \mr^N_{\theta}).
\end{gather*}
Since the dimension drops by one, the order of the Lagrangian distribution goes up by $\oq.$  If $u$ is elliptic, so is $u|_{\{x=0\}}.$
\end{proof}

\section{Singularities Produced by the Interaction of Three Waves}

In this section we consider the case where $\Ups_4=0$ and $\Ups_j,$ $j=1,2,3$ are given by \eqref{def-upsj}.  In particular we are interested in analyzing the singularities of $w_{1,1,1,0}.$   Since $\Ups_4=0,$ it follows that $w_4=0$

According to the discussion above, the solutions $w_\alpha,$ with $|\alpha|=1$ to \eqref{defwj}, are conormal to the half  light cones
\begin{gather*}
\Sigma_{j}^-=\{t-s_j+ |z-z_j|=0\}, \;\ j=1,2,3, \text{ if } t< s_j, \\
\Sigma_j^+=\{t-s_j-|z-z_j|=0\}, \;\ j=1,2,3,  \text{ if } t>s_j,
\end{gather*}
Here, according to \eqref{convw},  we identify $w_{1,0,0,0}= w_1,$ $w_{0,1,0,0}= w_2,$ $w_{0,0,1,0}= w_3.$ 
 
We know the functions $w_\alpha,$ $|\alpha|=2,$ satisfy
\begin{gather*}
\left(\square + f'(u_0)\right) w_{\alpha} = f''(u_0)\sum_{|\beta_1|=|\beta_2|=1} w_{\beta_1} w_{\beta_2}, \\
\mcn_-(w_{\alpha})=0,
\end{gather*}

 We know from a result of Bony \cite{Bony3} that for $t<s_j$
$w_{1,1,0,0}$ is conormal to $\Sigma_1^-\cup \Sigma_2^-,$ and for $t>s_j$
$w_{1,1,0,0}$ is conormal to $\Sigma_1^+\cup \Sigma_2^+,$ and similarly
$w_{1,0,1,0}$ is conormal to $\Sigma_1^{\mp}\cup \Sigma_3^{\mp}$ and $w_{0,1,1,0}$ is conormal to $\Sigma_2^{\mp}\cup \Sigma_3^{\mp},$  
depending of whether if $t<s_j$ or $t>s_j.$
Hence,  based on Proposition \ref{new-sing-111},  the singularities of $\Xi_\alpha=\mcn_+(w_\alpha),$ 
$|\alpha|=2$ are contained in $\Sigma_{j\infty}^+,$ $j=1,2,3,$ where
\begin{gather}
\Sigma_{j,\infty}^+=\{s-s_j+\lan \omega, z_j\ran =0\}, \; j=1,2,3, \label{def-Sji}
\end{gather}

Our goal is to hunt for singularities of $\Xi_{1,1,1,0}=\mcn_+(w_{1,1,1,0})$ which are not cointained in $\Sigma_{j\infty}^+,$ $j=1,2,3.$ These singularities will be generated by the nonlinearity, and will give information about $f^{(3)}(u_0).$

According to \eqref{defw123},   
\begin{gather}
\begin{gathered}
(\square +f'(u_0) )w_{1,1,1,0}=- f''(u_0)( w_{1,0,0,0} w_{0,1,1,0}+  w_{0,1,0,0} w_{1,0,1,0}+ w_{0,0,1,0} w_{1,1,0,0})-\\
\frac{1}{3!} f^{(3)}(u_0) w_{1,0,0,0}w_{0,1,0,0}w_{0,0,1,0}, \\
 \mcn_- w_{1,1,1}=0.
\end{gathered}\label{eq-w111}
\end{gather}

The distributions $w_\beta,$ $|\beta|=1$ are conormal to $\Sigma_{\beta}^\pm,$ according to the identification \eqref{convw}  also used above.  The distributions
$w_{\beta_1+\beta_2},$ $|\beta_1|=|\beta_2|=1,$ are conormal to $\Sigma_{\beta_1}^\pm\cup \Sigma_{\beta_2}^\pm.$ The products
$w_{1,0,0,0} w_{0,1,1,0},$ $w_{0,1,0,0} w_{1,0,1,0}$ and $w_{0,0,1,0} w_{1,1,0,0}$ are conormal to $\Sigma_1\cup \Sigma_2\cup \Sigma_3.$ The same is true for the triple product  $w_{1,0,0,0}w_{0,1,0,0}w_{0,0,1,0}.$ According to a theorem of Melrose and Ritter \cite{MelRit} and Bony \cite{Bony6,Bony1}, $w_{1,1,1,0}$ is conormal to
$\Sigma_1\cup \Sigma_2\cup \Sigma_3\cup \mcq^-\cup \mcq^+,$ where $\mcq_\pm$ are the hypersurfaces emanating from $\Gamma_-=\Sigma_{1}^-\cap \Sigma_{2}^-\cap \Sigma_{3}^-$ and $\Gamma_+=\Sigma_{1}^+\cap \Sigma_{2}^+\cap \Sigma_{3}^+$ respectively.  Here
\begin{gather}
\La_{\mcq^\pm}= \bigcup_{\mu>0} \exp \left( \mu H_p \left( (N^*\Gamma^\pm\setminus 0) \cap \{p=0\}\right) \right), \text{ and } \mcq^\pm=\Pi(\La_{\mcq^\pm}). \label{def-Qpm}
\end{gather}

According to Proposition \ref{new-sing-111}, we define
\begin{gather}
\La_{\mcq^\pm,\infty}= \widetilde{\La_{\mcq^\pm}} \cap \{x=0\}.\label{Lag-Q}
\end{gather}
We will show that the singularities of $\Xi_{1,1,1,0}=\mcn_+(w_{1,1,1,0})$ are contained in 
\begin{gather*}
\left(\bigcup_{j=1}^3N^*\Sigma_{j,\infty}\setminus 0\right) \cup \left(\La_{\mcq_-,\infty} \cup \La_{\mcq_+,\infty} \right).
\end{gather*}
  We will compute the principal symbol of $w_{1,1,1,0}$ on $\La_{\mcq^-,\infty}$ away from $\cup_{j=1}^3N^*\Sigma_{j,\infty}.$
  
   The key point is to show that even though the radiation fields of the terms 
 \begin{gather*}
 E_+\left( f''(u_0)(w_{1,0,0,0} w_{0,1,1,0}+  w_{0,1,0,0} w_{1,0,1,0}+ w_{0,0,1,0} w_{1,1,0,0})\right) \\  \text{ and } 
E_+\left( f^{(3)}(u_0)w_{1,0,0,0} w_{0,1,0,0} w_{0,0,1,0})\right),
\end{gather*}
 will be singular at $\bigcup_{\pm} \La_{\mcq^\pm,\infty}\setminus 0,$ the singularities of the latter term are stronger.

  We already know that for $\Ups_j,$ $j=1,2,3,$ given by \eqref{def-upsj}, the solution $w_j$ to \eqref{defwj} has an asymptotic expansion  given by \eqref{sing-expw}.  So we find that for $t<s_j,$ $w_j$ is a conormal distribution to $\Sigma_j^-=\{t-s_j+|z-z_j|=0\},$ and  if we denote
\begin{gather*}
y_j= t-s_j+|z-z_j|, \; j=1,2,3, \text{ away from } z=z_j, \text{ and so } \Sigma_j=\{y_j=0\}.
\end{gather*}
Since $w_{1,0,0,0}= Z_1(y)y_{1+}^m+$ smoother terms, $w_{0,1,0,0}= Z_2(y)y_{2+}^m+$ smoother terms, $w_{0,0,3,0}= Z_3(y)y_{3+}^m+$ smoother terms,  where $Z_j(y)= |z-z_j|^{-1}$ written with respect to $y.$
It follows that 
\begin{gather}
w_{1,0,0,0}  \in I^{-m-\tha}(\mr^4,\Sigma_1^-),\;\  w_{0,1,0,0} \in I^{-m-\tha}(\mr^4,\Sigma_2^-),
\text{ and  } w_{0,0,1,0} \in I^{-m-\tha}(\mr^4,\Sigma_3^-).  \label{lag-reg-wj}
\end{gather}

One way to  analyze the singularities of  $w_{1,1,1,0}$ is to use the calculus of paired Lagrangian distributions established by Greenleaf and Uhlmann  \cite{GreUhl}.  This is the approach used in the work  of Kurylev, Lassas and Uhlmann \cite{KLU} and Lassas, Uhlmann and Wang \cite{LUW}.   In fact we can just quote Proposition 3.7 of \cite{LUW}.  
\begin{prop}\label{reg-w111-N} (Lassas, Uhlmann and Wang \cite{LUW}) Let $\Gamma_\pm=\Sigma_1^\pm\cap \Sigma_2^\pm \cap \Sigma_3^\pm,$  and let 
$\La_{\mcq_\pm}$ be defined in \eqref{def-Qpm}. If $f^{(3)}(u_0)$ does not vanish on a segment $\mci_\pm \subset \Gamma_\pm,$ then away from $N^* \Sigma_j^\pm$ and $N^*(\Gamma_\pm),$ $w_{1,1,1,0}\in I^{-3m-4}(\mr^4, \La_{\mcq^\pm})$ on the portion of $\La_{\mcq^\pm}$ emanating from $\mci^\pm.$  In the case of $\Gamma_-,$ according to \eqref{lag-reg-wj}, the principal part of $w_{1,1,1,0}$ on the subset of $\mcq^-$ emanating from $\mci_-$ is given by $\frac{1}{3!}E_+\left( (\mcz f^{(3)}(u_0)|_{\mci_\pm}) y_{1+}^my_{2+}^my_{3+}^m\right),$ where $\mcz=Z_1Z_2Z_3.$  
\end{prop}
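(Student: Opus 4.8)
The plan is to represent $w_{1,1,1,0}$ through the forward solution operator of $\square+f'(u_0)$ and then run the paired Lagrangian calculus of Greenleaf and Uhlmann \cite{GreUhl} on the source, in the spirit of \cite{KLU,LUW}. Once the geometry is matched up the conclusion is Proposition 3.7 of \cite{LUW}, so the only points that need comment are that the operator here is $\square$ plus the \emph{smooth} zeroth order term $f'(u_0)$ rather than $\square$ itself, and that besides the triple product the source \eqref{eq-w111} also contains the three ``double'' products $w_i w_{jk}$, which must be shown to be subleading on the new Lagrangian.

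First I would write, using $\mcn_- w_{1,1,1,0}=0$, $w_{1,1,1,0}=-E_+(\mcg)$, where $\mcg$ is the right-hand side of \eqref{eq-w111} — the sum of the three terms $f''(u_0) w_i w_{jk}$ and of $\tfrac1{3!}f^{(3)}(u_0)w_{1,0,0,0}w_{0,1,0,0}w_{0,0,1,0}$ — and $E_+$ is the forward solution operator for $\square+f'(u_0)$ with vanishing backward radiation field. Since $f'(u_0)\in\CI(\mr^4)$ and $\square+f'(u_0)$ has the same principal symbol, the same null bicharacteristics, and hence the same forward flowout canonical relation as $\square$, a standard parametrix construction realizes $E_+$, microlocally away from $t\to-\infty$, as a distribution associated to the pair (diagonal, forward flowout of $\{\sigma_2(\square)=0\}$) of the type treated in \cite{GreUhl}; the potential only modifies lower order symbolic data. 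Thus for tracking wavefront sets and leading symbols of $w_{1,1,1,0}$ off $\bigcup_j N^*\Sigma_j^\pm$, $E_+$ behaves exactly like the forward fundamental solution of $\square$.

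Next I would classify the source. By \eqref{lag-reg-wj} each $w_j$ is an elliptic conormal distribution of order $-m-\tha$ to $\Sigma_j^\pm$; by Bony's theorem \cite{Bony3} the pairwise products, e.g.\ $w_{1,1,0,0}$, are conormal to $\Sigma_1^\pm\cup\Sigma_2^\pm$, carrying a corner singularity on $N^*(\Sigma_1^\pm\cap\Sigma_2^\pm)$ but nothing on the triple intersection. Consequently each summand $w_i w_{jk}$ and the triple product $w_1 w_2 w_3$ is singular on $\bigcup_j N^*\Sigma_j^\pm$ together with the pairwise ``sum'' Lagrangians, while only the triple product contributes a genuine conormal singularity on $N^*\Gamma_\pm$. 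Applying $E_+$ and invoking the interaction theorem of Melrose--Ritter \cite{MelRit} and Bony \cite{Bony6,Bony1}, the singularities of $w_{1,1,1,0}$ off $\bigcup_j N^*\Sigma_j^\pm$ lie on the flowout $\La_{\mcq^\pm}$ of $N^*\Gamma_\pm$, and away from $N^*\Sigma_j^\pm$ and $N^*\Gamma_\pm$, $w_{1,1,1,0}$ is conormal there. What remains is to pin down the order and principal symbol on the component of $\La_{\mcq^-}$ emanating from $\mci_-$.

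For the order, three conormal factors of order $-m-\tha$ to pairwise transversal hypersurfaces meeting cleanly in codimension three form a product whose restriction to the ``triple-sum'' piece $N^*\Gamma_-$ has the order dictated by the product rule of \cite{GreUhl}; composing with $E_+$, which raises the conormal order by the dimension shift appropriate to a flowout from the codimension-three submanifold $\Gamma_-$, gives $w_{1,1,1,0}\in I^{-3m-4}(\mr^4,\La_{\mcq^-})$ on the piece emanating from $\mci_-$. The \textbf{main obstacle} is the parallel count for $E_+(f''(u_0) w_i w_{jk})$: one must verify that, although $w_i w_{jk}$ carries wavefront on the pairwise flowouts so that its $E_+$-image can meet $\La_{\mcq^-}$, the resulting contribution there is of order strictly below $-3m-4$ and hence does not affect the principal symbol — this is exactly where the extra transversal direction of the pure triple conormal interaction, absent when a single wave multiplies a two-cone conormal wave, is decisive, and the estimate is carried out as in \cite[\S 3]{LUW}. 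Finally, for the symbol, near $\Gamma_-$ one has $w_j=Z_j(y)\,y_{j+}^m+(\text{lower order})$ with $Z_j=|z-z_j|^{-1}$ elliptic, so $w_1 w_2 w_3=\mcz\,y_{1+}^m y_{2+}^m y_{3+}^m+(\text{lower order})$ with $\mcz=Z_1Z_2Z_3$; since the leading symbol of the triple interaction on $\La_{\mcq^-}$ sees $f^{(3)}(u_0)$ only through its restriction to $\Gamma_-$ (indeed to $\mci_-$), the principal part of $w_{1,1,1,0}$ on the subset of $\mcq^-$ emanating from $\mci_-$ equals $\tfrac1{3!}E_+\big((\mcz f^{(3)}(u_0)|_{\mci_-})\,y_{1+}^m y_{2+}^m y_{3+}^m\big)$, the $+$ case being identical. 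Modulo the adaptation of the operator and the handling of the double products, this is Proposition 3.7 of \cite{LUW}.
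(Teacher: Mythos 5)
Your proposal follows essentially the same route as the paper: the paper offers no independent argument for this proposition but simply quotes Proposition 3.7 of \cite{LUW} (after noting that the relevant tool is the paired Lagrangian calculus of Greenleaf--Uhlmann), and your sketch reduces to the same citation for the decisive step, namely that the $f''(u_0)\,w_i w_{jk}$ terms contribute to $\La_{\mcq^\pm}$ only at order strictly below $-3m-4$, with correct supplementary remarks that the smooth potential $f'(u_0)$ does not change the canonical relation of $E_+$ and that the leading symbol is $\tfrac1{3!}E_+\bigl((\mcz f^{(3)}(u_0)|_{\mci_\pm})\,y_{1+}^m y_{2+}^m y_{3+}^m\bigr)$. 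The only imprecision is your interim claim that the double products carry no singularity on $N^*\Gamma_\pm$ (sums of a covector in $N^*\Sigma_i$ with one in $N^*(\Sigma_j\cap\Sigma_k)$ do land in $N^*\Gamma_\pm$), but your subsequent paragraph treats their contribution as present and subleading, which is the correct accounting.
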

Using this result, one can compute the singularities of $\mcn_+(w_{1,1,1,0}):$
\begin{prop}\label{add-sing-w} Let $\Ups_j,$ $j=1,2,3$ be given by \eqref{def-upsj}. Let $\Sigma_{j,\infty},$  given by \eqref{def-Sji}, 
denote the forward radiation patterns of $\Sigma_j^+.$ Let $\Gamma_\pm=\Sigma_1^\pm\cap \Sigma_2^\pm \cap \Sigma_3^\pm,$  and let 
$\La_{\mcq_\infty^\pm}$ be defined in \eqref{Lag-Q}. Let $w_{1,1,1,0}$ be the corresponding solution to \eqref{eq-w111}.  Then, for suitably chosen $z_j,$ and $s_j,$ $j=1,2,3,$ and away from $\Sigma_{j,\infty}^{\pm},$  
 
 \begin{gather*}
 \Xi_{1,1,1,0}=\mcn_+(w_{1,1,1,0})\in I^{-3m-\frac{15}4}(\mr\times \ms^2; \La_{\mcq^-,\infty})+ I^{-3m-\frac{15}4}(\mr\times \ms^2; \La_{\mcq^+,\infty}).
 \end{gather*} 
 
 Moreover,  the principal symbol of $\Xi_{1,1,1,0}$ at a point $(s,\omega, \mu, \varkappa) \in \La_{\mcq^-,\infty}$ away from $\Sigma_{j,\infty}^\pm,$ $j=1,2,3,$   
 on determines $f^{(3)}(u_0(q)),$ where $(q, \tau,\xi)\in N^*\Gamma^-\setminus 0$ and $(s,\omega,\mu,\varkappa)\in \La_{\mcq^-,\infty}\setminus 0$ are connected by a unique bicharacteristic for $H_{\varrho}.$  Moreover, by varying $z_j$ and $s_j,$ one determines  $f^{(3)}(u_0(q))$ for all $q\in \mr^4.$
\end{prop}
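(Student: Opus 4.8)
The plan is to transfer the conormal description of $w_{1,1,1,0}$ on $\mr^{4}$ provided by Proposition~\ref{reg-w111-N} to the radiation face by means of Proposition~\ref{new-sing-111}, and then to read $f^{(3)}(u_{0}(q))$ off the resulting principal symbol on $\La_{\mcq^{-},\infty}$. First I would localize. By \eqref{eq-w111} the wave front set of $w_{1,1,1,0}$ is contained in $N^{*}\Sigma_{1}\cup N^{*}\Sigma_{2}\cup N^{*}\Sigma_{3}\cup N^{*}\Gamma_{\pm}\cup\La_{\mcq^{\pm}}$, and Proposition~\ref{reg-w111-N} states that, away from $N^{*}\Sigma_{j}^{\pm}$ and $N^{*}\Gamma_{\pm}$, the distribution $w_{1,1,1,0}$ is conormal to $\La_{\mcq^{\pm}}$ of order $-3m-4$, with principal symbol on the portion of $\La_{\mcq^{-}}$ issuing from $\mci_{-}\subset\Gamma^{-}$ equal to that of $\frac{1}{3!}E_{+}\big((\mcz\,f^{(3)}(u_{0})|_{\mci_{-}})\,y_{1+}^{m}y_{2+}^{m}y_{3+}^{m}\big)$, $\mcz=Z_{1}Z_{2}Z_{3}$. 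Since $\Gamma_{\pm}$ is bounded, the component of $w_{1,1,1,0}$ conormal to $N^{*}\Gamma_{\pm}$ lives in a compact set and contributes nothing to $\mcn_{+}w_{1,1,1,0}$, while by Proposition~\ref{new-sing-111} the component conormal to $\Sigma_{j}^{\pm}$ has radiation pattern $\Sigma_{j,\infty}^{\pm}$; hence, away from $\bigcup_{j}\Sigma_{j,\infty}^{\pm}$, the singularities of $\Xi_{1,1,1,0}=\mcn_{+}w_{1,1,1,0}$ lie only on $\La_{\mcq^{-},\infty}$ and $\La_{\mcq^{+},\infty}$.

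Next I would apply Proposition~\ref{new-sing-111} to the flowout part. Because $\La_{\mcq^{-}}\subset\{p=0\}$, its image $\widetilde{\La_{\mcq^{-}}}$ under $x=1/r$, $s=t-r$ extends to a $\CI$ conic Lagrangian meeting $\{x=0\}$ transversally in $\La_{\mcq^{-},\infty}$, and the suitably $r$-weighted $w_{1,1,1,0}$ extends to a Lagrangian distribution associated with $\widetilde{\La_{\mcq^{-}}}$. Combining the order of $w_{1,1,1,0}$ on $\La_{\mcq^{-}}$ from Proposition~\ref{reg-w111-N} with the order shift $\oq$ produced by restriction to $\{x=0\}$ in Proposition~\ref{new-sing-111}, together with the factors entering the definition \eqref{radf-func} of $\mcn_{+}$, one obtains $\Xi_{1,1,1,0}\in I^{-3m-\frac{15}{4}}(\mr\times\ms^{2};\La_{\mcq^{-},\infty})$ away from $\Sigma_{j,\infty}^{\pm}$, and symmetrically $\Xi_{1,1,1,0}\in I^{-3m-\frac{15}{4}}(\mr\times\ms^{2};\La_{\mcq^{+},\infty})$; this is the first assertion. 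Moreover, Proposition~\ref{new-sing-111} identifies the principal symbol of $\Xi_{1,1,1,0}$ on $\La_{\mcq^{-},\infty}$ with the restriction to $\{x=0\}$ of the principal symbol of $w_{1,1,1,0}$ on $\widetilde{\La_{\mcq^{-}}}$.

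For the symbol I would argue as follows. A point $(s,\omega,\mu,\varkappa)\in\La_{\mcq^{-},\infty}\setminus 0$ lies on a unique integral curve of $H_{\varrho}$ issuing from $\widetilde{\La_{\mcq^{-}}}$ in $x<0$, which pulls back to the $H_{p}$-bicharacteristic generating $\La_{\mcq^{-}}$ and originates at a point $(q,\tau,\xi)\in(N^{*}\Gamma^{-}\setminus 0)\cap\{p=0\}$. Transporting the principal symbol of $w_{1,1,1,0}$ along this bicharacteristic by the transport law of Proposition~\ref{reg-w111-N} --- on $\La_{\mcq^{-}}$ it is the $H_{p}$-transport, off $N^{*}\Gamma^{-}$, of the symbol of the source $\mcz\,f^{(3)}(u_{0})\,y_{1+}^{m}y_{2+}^{m}y_{3+}^{m}$ --- and then restricting to $\{x=0\}$, one arrives at $\sigma(\Xi_{1,1,1,0})(s,\omega,\mu,\varkappa)=c\,f^{(3)}(u_{0}(q))$, where $c=c(s,\omega,\mu,\varkappa)$ is an explicit factor built from $\mcz(q)$, the nonvanishing $m$-dependent symbols of the weights $y_{j+}^{m}$, the half-density factors of the transport equations, and the symbols of the operations in $\mcn_{+}$. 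The point is that $c$ is nonzero and depends only on the background geometry (the parameters $z_{j},s_{j}$ and the bicharacteristic), not on $f$, so the value of $\sigma(\Xi_{1,1,1,0})$ at this point determines $f^{(3)}(u_{0}(q))$. For the parameters, I would require that the cones $\Sigma_{j}^{-}$ meet transversally along a one-dimensional $\Gamma^{-}$ disjoint from the vertices $\{z=z_{j}\}$, so that $\mcz$ is finite and nonzero on $\Gamma^{-}$, and that the relevant portion of $\La_{\mcq^{-}}$ reaches $\{x=0\}$ away from $\bigcup_{j}\Sigma_{j,\infty}^{\pm}$; a generic choice achieves this, and as $z_{j},s_{j}$ vary the curve $\Gamma^{-}$ can be made to pass through any prescribed $q=(t_{q},z_{q})\in\mr^{4}$, e.g.\ by fixing $z_{1},z_{2},z_{3}$ in general position relative to $z_{q}$ and setting $s_{j}=t_{q}+|z_{q}-z_{j}|$. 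Since $(s,\omega,\mu,\varkappa)\mapsto q$ is a bijection --- the composition of the bicharacteristic parametrization of $\La_{\mcq^{-}}$ off $N^{*}\Gamma^{-}$ with the diffeomorphism $\widetilde{\La_{\mcq^{-}}}\to\La_{\mcq^{-},\infty}$ of Proposition~\ref{new-sing-111} --- varying $z_{j},s_{j}$ yields $f^{(3)}(u_{0}(q))$ for all $q\in\mr^{4}$.

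I expect the main obstacle to be the principal symbol computation of the third paragraph: one must verify that the symbol of $w_{1,1,1,0}$ on the flowout $\La_{\mcq^{-}}$ transports faithfully through the radiation transform into the form $c\,f^{(3)}(u_{0}(q))$ with $c$ explicit, nonzero, and $f$-independent. This splits into (i) confirming that the $r$-weighted $w_{1,1,1,0}$ genuinely defines a Lagrangian distribution associated with $\widetilde{\La_{\mcq^{-}}}$ up to $\{x=0\}$, so that Proposition~\ref{new-sing-111} applies with the stated order and symbol; (ii) checking that the double-product terms $f''(u_{0})(w_{1,0,0,0}w_{0,1,1,0}+\cdots)$ in \eqref{eq-w111}, whose $E_{+}$-images are also singular on $\La_{\mcq^{\pm},\infty}$, produce there strictly weaker singularities than the triple product --- this is what Proposition~\ref{reg-w111-N} encodes in $\mr^{4}$, but it must be checked to persist under restriction to $\{x=0\}$; and (iii) establishing the non-degeneracy needed for $(s,\omega,\mu,\varkappa)\mapsto q$ to be a local diffeomorphism and for $c$ not to vanish on the relevant part of $\La_{\mcq^{-},\infty}$.
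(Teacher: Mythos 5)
Your proposal follows essentially the same route as the paper's proof: quote Proposition \ref{reg-w111-N} for the interior conormal order $-3m-4$ and the principal symbol of $w_{1,1,1,0}$ on the flow-out $\La_{\mcq^\pm}$, push to null infinity via Proposition \ref{new-sing-111} (order shift $\oq$ and restriction of the symbol), use the transport law $H_p\sigma(w_{1,1,1,0})=0$ with initial data an elliptic multiple of $\mcz f^{(3)}(u_0)$ on $(N^*\Gamma^-\setminus 0)\cap\{p=0\}$, and then vary $z_j,s_j$ to reach every $q$. Two caveats. First, your assertion that $\Gamma_\pm$ is bounded is false: the transversal intersection of three light cones is an unbounded curve (in the paper's configuration $\Gamma^-$ is the hyperbola $x_1=a,\ x_2=b,\ t=-(x_3^2+a^2+b^2)^\ha$); what actually saves this step is that the ideal endpoints of $\Gamma^+$ at forward null infinity lie in $\bigcap_j\Sigma_{j,\infty}^+$ while $\Gamma^-$ recedes to $s=-\infty$, so both are invisible once you work away from $\Sigma_{j,\infty}^{\pm}$ at finite $s$. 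Second, where you appeal to ``a generic choice'' and defer your items (i)--(iii), the paper substantiates exactly these points by an explicit computation: with $s_1=s_2=s_3=0,$ $z_1=0,$ $z_2=(2a,0,0),$ $z_3=(0,2b,0)$ it computes $N^*\Gamma^-$, the flow-out $\La_{\mcq^-}$, its projection \eqref{eq-mcq} and the limit \eqref{eq-mcq-infty}, and verifies directly that $\mcq^-_\infty$ contains many points off the planes $\Sigma_{j,\infty}^+$, after which varying $a,$ $b$ and the common shift $s^*$ yields $f^{(3)}(u_0(q))$ for all $q$; your alternative prescription $s_j=t_q+|z_q-z_j|$ for covering all $q$ is fine, but the non-degeneracy and ellipticity you flag as the main obstacle are precisely what that explicit configuration is there to certify.
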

\begin{proof}   We are interested in the singularities of $w_{1,1,1,0}$ emanating from $\Gamma_-.$ As above, let $y_j= t-s_j+|z-z_j|,$ $j=1,2,3,$ then
$\Gamma_- =\{y_1=y_2=y_3=0\},$ which corresponds to the intersection of the three waves.   If  $\La_{\mcq^-}$ denote the Lagrangian submanifold of $T^*\mr^4\setminus 0,$ obtained by the flow-out  of the submanifold
 \begin{gather*}
 \La_0^-=(N^*\Gamma^-\setminus 0) \cap \{p=0\}, \;\ \text{ where }  p \text{ is the principal symbol of } \square,
 \end{gather*}
 under $H_p,$ the Hamilton vector field of $p.$  Therefore, the principal part of the singularity of $w_{1,1,1,0}$ on $\La_{\mcq^-}$ and away from the incoming surfaces and $\Gamma^-$ is given by
 \begin{gather*}
 E_+\left[ \left(\mcz f^{(3)}(u_0)\right)|_\Gamma \; y_{1+}^my_{2+}^my_{3+}^m\right].
 \end{gather*}
 
  It's symbol $\sigma(w_{1,1,1,0})$ satisfies,
 \begin{gather*}
 H_p \sigma(w_{1,1,1,0})=0, \\ 
 \sigma(w_{1,1,1,0})(q,\eta) = \frac{1}{3!}\mcz(q) f^{(3)}(u_0(q))\, \varsigma(q,\eta) \text{ for } (q,\eta)\in (N^*\Gamma^-\setminus 0)\cap \{p=0\}, \\ 
 \varsigma(q,\eta) \in S^{-3m-3} \text{ is elliptic } 
 \end{gather*}
 where, as above $p=\sigma_2(\square).$   This means the symbol is constant along the bicharacteristics of $H_p.$  This means we can read 
$\frac{1}{3!}f^{(3)}(u_0(q))$ from the singularities of $\mcn_+(w_{1,1,1,0}).$  Notice that one can also determine if $f^{(3)}(u_0(q))=0$ because there will be no singularities of top order.

One can do this computation quite explicitly for a particular, yet general enough example.  We pick $s_1=s_2=s_3=0,$ and 
 \begin{gather}
 z_1=(0,0,0,0), \;\ z_2=(2a,0,0,0) \text{ and } z_3=(0,2b,0,0), \label{def-z1z2z3}
 \end{gather}
 Then, the light cones with vertices on these points satisfy
 \begin{gather*}
 x_1^2+x_2^2+ x_3^2=t^2, \\
 (x_1-2a)^2+x_2^2+x_3^2=t^2,\\
 x_1^2+(x_2-2b)^2+x_3^2=t^2,
 \end{gather*}
 and they will intersect transversally at the hyperbolas
 \begin{gather*}
 \Gamma^-= \{ (a,b,x_3, t), \;\  t=-(x_3^2+ a^2+b^2)^\ha\}, \\
  \Gamma^+= \{ (a,b,x_3, t), \;\  t=(x_3^2+ a^2+b^2)^\ha\}.
 \end{gather*}
 For $t<0,$ the conormal bundle to $\Gamma^-$ is given by
 \begin{gather*}
 N^*\Gamma^-=\{ x_1=a, x_2=b, t=-(x_3^2+a^2+b^2)^\ha, x_3\tau + t \xi_3=0\}.
 \end{gather*}
 The Lagrangian submanifold $\La_{\mcq^-}$ obtained by the flow-out of $(N^*\Gamma^-\setminus 0) \cap \{p=0\},$ is given by
 \begin{gather*}
 \xi_1=\xi_{10}, \; \xi_2=\xi_{20}, \;  \xi_3=\xi_{30}, \;\ \tau=\tau_0, \;\ x_{30}\tau_0+ t_0 \xi_{30}=0, \\ t_0= -(x_{30}^2+a^2+b^2)^\ha,  \text{ and } \tau_0=|\xi_0|, \\
 x_1=a -2\xi_{10} \nu, \;\ x_2=b -2\xi_{20}\nu, \;\ x_3= x_{30}-2\xi_{30}\nu, \;\ t=t_0+2\tau_0 \nu, \;\ \nu\in \mr.
 \end{gather*}
 
 The projection of $\La_{Q^-}$ to $\mr^4$ is denoted by $\mcq^-$ and it is given by 
 \begin{gather}
\mcq^-=\{z=(t, z):  |z-(a,b,x_{30})|^2=(t-t_0)^2, \; x_{30}t = x_3 t_0, \; t_0= -(x_{30}^2+a^2+b^2)^\ha\}.\label{eq-mcq}
\end{gather}

If one writes $z=r\omega,$ $r=|z|,$ and sets $t=s+r,$ then
\begin{gather}
\begin{gathered}
(r,s,\omega)\in \mcq^- \text{ if and only if } s^2-2s t_0 +2r\left( s+\lan \omega, (a,b,x_{30})\ran-t_0\right)=0, \\ x_{30}(r+s)=t_0 r \omega_3, \text{ and } t_0=-( x_{30}^2+a^2+b^2)^\ha.
\end{gathered}\label{eq-mcq-rrad}
\end{gather}
If we divide the equation of $\mcq^-$ by $r$ and for fixed $s,$ let $r\rightarrow \infty,$ one obtains:
\begin{gather}
\mcq_\infty^-= \{ (s,\omega): s+\lan \omega, (a,b,x_{30})\ran=t_0, \;\  \omega_3= \frac{x_{30}}{t_0}, \; t_0=-( x_{30}^2+a^2+b^2)^\ha\}.
\label{eq-mcq-infty}
\end{gather}

 Given the choices of $z_j,$ $j=1,2,3,$ we have
 \begin{gather*}
 \Sigma_{1,\infty}^+= \{s=0\}, \\
 \Sigma_{2,\infty}^+=\{ s- \lan\omega, (a,0,0)\ran =0\}, \\
  \Sigma_{3,\infty}^+=\{ s- \lan\omega, (0,b,0)\ran=0\}.
  \end{gather*}
 
    This shows that, there are many points on $\mcq^-_{\infty}$ which are not on $\Sigma_{j,\infty}^+.$ So by computing the principal symbol of $w_{1,1,1,0}$  along the null bicharacteristics for $p$ starting over a point
 $(a, b, x_{30},t_0)\in \Gamma^-,$ which do not  lie on $N^*\Sigma_j\setminus 0,$ $j=1,2,3,$
$\sigma(w_{1,1,1,0})$ determines $\left(f^{(3)}(u_0)\right)(a,b, x_{03},t_0),$ with  $t_0=-(x_{03}^2+a^2+b^2)^\ha.$ By varying $s_1=s_2=s_3=s^*,$ one then determines
$\left(f^{(3)}(u_0)\right)(a,b, x_{03},t_0),$ with  $t_0-s^*=-(x_{03}^2+a^2+b^2)^\ha.$   By varying $a,$  $b$ and $s^*$ one determines  $f^{(3)}(u_0)(z,t)$ for all $(z,t)\in \mr^4.$
 \end{proof}

\section{Singularities Produced by the Interaction of Four Waves}

In this section we consider the case where $\Ups_j,$ $j=1,2,3,4,$ are given by \eqref{def-upsj} and we are interested in analyzing the singularities of $w_{1,1,1,1}.$  In this case we have to consider a system of equations. The terms $w_{\beta},$ with $|\beta|=1:$
\begin{gather*}
\beta=(1,0,0,0), \; (0,1,0,0), \;\ (0,0,1,0), \; (0,0,0,1),
\end{gather*}  
satisfy the linear equation \eqref{defwj}, the terms $w_\beta,$ of order two, 
\begin{gather*}
\beta=(1,1,0,0), \; (1,0,1,0), \; (1,0,0,1),\; (0,1,1,0), \; (0,1,0,1), \; (0,0,1,1),
\end{gather*}
 satisfy \eqref{defwjk}. The terms of order three, $w_{\beta},$ 
 \begin{gather*}
 \beta=(1,1,1,0), \; (1,1,0,1), \; (1,0,1,1), \; (0,1,1,1),
 \end{gather*}
  satisfy \eqref{defw123}. To express the equation \eqref{defw1234} for the term $w_{1,1,1,1},$  we split the terms $\alpha=(1,1,1,1)$ into
\begin{gather*}
\alpha=\beta_1+\beta_2, \;\ |\beta_1|=2, \ |\beta_2|=2, \\
\alpha=\beta_1+\beta_2, \;\ |\beta_1|=1, \; |\beta_2|=3, \\
\alpha=\beta_1+\beta_2+\beta_3, \;  |\beta_j|=1, \; j=1,2, \; |\beta_3|=2, \\
\text{ and } \alpha= (1,0,0,0)+(0,1,0,0)+(0,0,1,0)+(0,0,0,1),
\end{gather*}
and write
\begin{gather}
\begin{gathered}
\square w_{1,1,1,1}= -f'(u_0) w_{\alpha}- \frac{ 1 }{2!}  f^{(2)}(u_0)\sum_{|\beta_1|=1, |\beta_2|=3} w_{\beta_1} w_{\beta_2}
-\frac{1}{2!}  f^{(2)}(u_0)\sum_{|\beta_1|=2, |\beta_2|=2} w_{\beta_1} w_{\beta_2}+ \\
\frac{1}{3!}  f^{(3)}(u_0)\sum_{|\beta_1|=1, |\beta_2|=1, |\beta_3|=2} w_{\beta_1} w_{\beta_2}w_{\beta_3}+ 
\frac{1}{4!}  f^{(4)}(u_0) w_{1,0,0,0}w_{0,1,0,0}w_{0,0,1,0}w_{0,0,0,1}, \\
 \mcn_- w_{1,1,1,1}=0.
\end{gathered}\label{sing4w}
\end{gather}

We will compute the principal symbol of $w_{1,1,1,1}$ on the light cone emanating from $\ga_-=\cap_{j=1}^4 \Sigma_j^-.$   
Here we have the interaction of four waves and we need to describe the singularities coming from each term of \eqref{sing4w}. 
Just like the triple interaction case discussed above, this analysis was done by Lassas, Uhlmann and Wang in Proposition 3.11 of \cite{LUW}.
\begin{prop}\label{reg-w1111-N} ( Lassas, Uhlmann and Wang \cite{LUW}) Let $\gamma_\pm=\Sigma_1^\pm\cap \Sigma_2^\pm \cap \Sigma_3^\pm\cap \Sigma_4^\pm,$  and let $\La_\pm \subset T^*\mr^4\setminus 0$ denote the Lagrangian submanifold obtained by the flow-out of $(T_{\ga_\pm}^*\mr^4\setminus 0) \cap\{p=0\}$ by $H_p.$  If $f^{(4)}(u_0(\ga_\pm))\not=0,$  then away from $N^* \Sigma_j^\pm$ and $T^*{\ga_\pm}\mr^4,$ $w_{1,1,1,1}\in I^{-4m-\frac92}(\mr^4, \mcq^\pm)$ and principal symbol of $w_{1,1,1,1}$ is given by $\frac{1}{4!}E_+\left( \mcw (f^{(4)}(u_0(\ga_\pm)) y_{1+}^my_{2+}^my_{3+}^m y_{4+}^m\right),$ where $\mcw=Z_1Z_2Z_3Z_4.$  
\end{prop}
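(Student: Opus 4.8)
The plan is to realize $w_{1,1,1,1}$ as $E_+$ applied to the right-hand side of \eqref{sing4w}, where $E_+$ denotes the forward parametrix for $\square+f'(u_0)$. Since $u_0\in C^\infty$ in the region of interest, $f'(u_0)$ is smooth there, so $E_+$ has the microlocal mapping properties of the forward fundamental solution of $\square$: it carries a paired-Lagrangian distribution associated to a conic pair $(\La_0,\La_1)$, with $\La_0\subset\{p=0\}$ and $\La_1$ the $H_p$-flowout of $\La_0\cap\{p=0\}$, to a Lagrangian distribution on $\La_1$ with the standard shift of orders, cf. Greenleaf--Uhlmann \cite{GreUhl} and Melrose--Ritter \cite{MelRit}. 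First I would record the Lagrangian regularity of the factors entering \eqref{sing4w}: by \eqref{lag-reg-wj} (extended to the fourth wave) each $w_\beta$ with $|\beta|=1$ is elliptic in $I^{-m-\tha}(\mr^4, N^*\Sigma_\beta^-)$ with leading part $Z_\beta y_{\beta+}^m$; by Bony \cite{Bony3} the order-two terms $w_\beta$, $|\beta|=2$, are conormal to the transversal union of the two corresponding cones, with no new interaction singularity; and by Proposition \ref{reg-w111-N} each order-three term $w_\beta$, $|\beta|=3$, is conormal to the union of the three cones together with the triple-interaction cone emanating from their common intersection curve.

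The heart of the argument is a bookkeeping of singularities at the quadruple intersection $\ga_-=\bigcap_{j=1}^4\Sigma_j^-$ (generically a point) and of their $H_p$-flowout. I would treat each source term of \eqref{sing4w} separately in a conic neighborhood of $N^*\ga_-\setminus 0$ inside $\{p=0\}$ and compare Sobolev/Lagrangian strengths. Near $\ga_-$ the term $\frac{1}{4!}f^{(4)}(u_0)\,w_{1,0,0,0}w_{0,1,0,0}w_{0,0,1,0}w_{0,0,0,1}$ is a product of four conormal distributions to four hypersurfaces meeting transversally at a point; iterating the product rule for paired Lagrangian distributions (or computing directly with the model factors $y_{j+}^m$ and Taylor-expanding $f^{(4)}(u_0)$ at $\ga_-$) it is a paired-Lagrangian distribution associated to $(N^*\ga_-,\ \bigcup_j N^*\Sigma_j^-)$ with leading term $\frac{1}{4!}f^{(4)}(u_0(\ga_-))\,\mcw\,y_{1+}^m y_{2+}^m y_{3+}^m y_{4+}^m$, $\mcw=Z_1Z_2Z_3Z_4$, modulo strictly smoother terms. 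Each remaining source term is a product of at most three order-one factors, possibly together with order-two or order-three factors whose own new singularities lie only on the lower-order interaction cones; a degree count shows each is strictly less singular than the $f^{(4)}$ term in the microlocal neighborhood of $N^*\ga_-$, so after applying $E_+$ it contributes only lower-order Lagrangian singularities on the flowout $\La_-$ of $(T^*_{\ga_-}\mr^4\setminus 0)\cap\{p=0\}$ --- away, crucially, from the incoming directions $N^*\Sigma_j^-$ and from $T^*_{\ga_-}\mr^4$ itself, where those terms genuinely contribute. Applying $E_+$ to the $f^{(4)}$ term and propagating its symbol along $H_p$ then places $w_{1,1,1,1}$, off the incoming cones and off $\ga_-$, in $I^{-4m-\frac92}(\mr^4,\La_-)$ with principal part represented by $\frac{1}{4!}E_+\big(\mcw\,f^{(4)}(u_0(\ga_-))\,y_{1+}^m y_{2+}^m y_{3+}^m y_{4+}^m\big)$, the sign and normalization being dictated by \eqref{sing4w}; the case $\ga_+$, $t>s_j$, is identical with reversed signs. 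This is precisely the computation carried out in Lassas--Uhlmann--Wang \cite{LUW}, Proposition 3.11, which we invoke.

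The main obstacle --- and what Proposition 3.11 of \cite{LUW} genuinely resolves --- is the strength comparison above: showing that none of the lower-order interaction terms (in particular $f^{(3)}(u_0)w_{\beta_1}w_{\beta_2}w_{\beta_3}$ with $|\beta_3|=2$, and $f^{(2)}(u_0)w_{\beta_1}w_{\beta_2}$ with $|\beta_1|=1$, $|\beta_2|=3$, whose factors already carry a triple-interaction singularity) contaminates the top-order symbol of $w_{1,1,1,1}$ on $\La_-$ off the incoming cones. Because the quadruple point $\ga_-$ lies on every triple-intersection curve $\Sigma_i^-\cap\Sigma_j^-\cap\Sigma_k^-$, one cannot argue by disjointness of wavefront sets; one must instead track the exact orders of the paired-Lagrangian spaces through the full iteration of products, and this order accounting is the technical core. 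Once it is in place, the stated Lagrangian regularity and principal symbol of $w_{1,1,1,1}$ on $\La_-$ follow, and (as in Proposition \ref{add-sing-w} for the triple case) the corresponding statement for $\mcn_+(w_{1,1,1,1})$ via Proposition \ref{new-sing-111} will yield $f^{(4)}(u_0)$ after varying the parameters $z_j,s_j$.
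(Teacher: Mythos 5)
Your proposal is correct and takes essentially the same route as the paper: the paper gives no proof of this proposition, quoting it directly as Proposition 3.11 of Lassas--Uhlmann--Wang \cite{LUW}, which is exactly what you ultimately invoke. Your preliminary sketch of the paired-Lagrangian order bookkeeping (product of the four conormal factors at $\ga_-$, application of $E_+$, symbol propagation along $H_p$, and the comparison showing the $f^{(2)}$ and $f^{(3)}$ source terms do not contaminate the top-order symbol on the flowout away from the incoming cones) is consistent with that reference and with how the paper handles the triple-interaction case.
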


 The key point here is that there finitely many interactions, and $\Xi_{1,1,1,1}$ is conormal to a finite set of surfaces and we are interested in the singularity of $\Xi_{1,1,1,1}=\mcn_+(w_{1,1,1,1})$ on the radiation pattern of the light cone over the quadruple interaction $\cap_{j=1}^4 \Sigma_j^-,$ which are not on the other surfaces.  
 
The following is the main result of this section:
\begin{prop}\label{reg-w1111} Let $\ga_-=\cap_{j=1}^4 \Sigma_j^-$ and let $\La_-\subset T^*\mr^4\setminus 0$ denote the Lagrangian submanifold obtained by the flow-out of $(T_{\ga_-}^*\mr^4\setminus 0) \cap\{p=0\}$ by $H_p$ and let $\La_{-,\infty}$ denote its radiation pattern.   Let $\La_{\mcq^{jkl}_{\pm},\infty}$ be the forward radiation pattern of $\La_{\mcq_\pm^{jkl}},$  which are Lagrangians submanifolds emanating from the triple interactions $\Sigma_j\cap \Sigma_k\cap \Sigma_l,$ and let $N^*\Sigma_{j\infty},$ $j=1,2,3,4,$ be as defined in Proposition \ref{new-sing-111}. Then, away from $\La_{\mcq^{jkl}_{\pm},\infty}$ and from   $N^*\Sigma_{j\infty},$ $j=1,2,3,4,$
$\mcn_+(w_{1,1,1,1})=I^{-4m-\frac{17}4}(\mr\times \ms^2, \La_{-\infty}).$  In this case, if $\ga_-=(t_0,z^*),$ 
the principal symbol of $w_{1,1,1,1}$ at a point of $\La_{-\infty}$ which is not on any other Lagrangians, determines $f^{(4)}(u_0(\ga_-)).$ 
\end{prop}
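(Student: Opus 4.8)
The plan is to follow the template of Proposition \ref{add-sing-w}, replacing the triple interaction by the quadruple one. First I would fix the geometry: take $s_1=\dots=s_4=0$ and four points $z_1,\dots,z_4\in\mr^3$ in general position (a nondegenerate tetrahedron), so that the backward half-cones $\Sigma_j^-=\{t-s_j+|z-z_j|=0\}$ meet pairwise, triply, and all four transversally, and $\ga_-=\bigcap_{j=1}^4\Sigma_j^-$ is the single spacetime point $(t_0,z^*)$, with $z^*$ the circumcenter of the tetrahedron and $t_0=-|z^*-z_1|$. Recall that for the chosen radial $\Ups_0$ the solution $u_0\in\CI(\mr\times\mr^3)$, so $f^{(k)}(u_0)$ and all transport coefficients are smooth. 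Let $\La_-$ be the flow-out of $(T^*_{\ga_-}\mr^4\setminus 0)\cap\{p=0\}$ by $H_p$; away from the vertex $\ga_-$ its base projection $\mcq^-$ is the smooth light cone through $\ga_-$ and $\La_-=N^*\mcq^-$ there. The first, purely interior, step is to quote Proposition \ref{reg-w1111-N}: away from $\bigcup_j N^*\Sigma_j^-$ and from $T^*_{\ga_-}\mr^4$, the distribution $w_{1,1,1,1}$ lies in $I^{-4m-\frac92}(\mr^4,\La_-)$ on the part of $\La_-$ emanating from $\ga_-$, with principal symbol that of $\frac{1}{4!}E_+\!\left(\mcw\,f^{(4)}(u_0(\ga_-))\,y_{1+}^my_{2+}^my_{3+}^my_{4+}^m\right)$, $\mcw=Z_1Z_2Z_3Z_4$, $Z_j=|z-z_j|^{-1}$. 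Thus $\sigma(w_{1,1,1,1})$ satisfies $H_p\sigma(w_{1,1,1,1})=0$ and at $(q,\eta)\in(T^*_{\ga_-}\mr^4\setminus 0)\cap\{p=0\}$ equals $\frac{1}{4!}\mcw(\ga_-)f^{(4)}(u_0(\ga_-))$ times an elliptic symbol, explicit and independent of $f$. This quotation already subsumes the fact that the $f^{(2)}$ and $f^{(3)}$ source terms in \eqref{sing4w} contribute to $\La_-$ only at order strictly below $-4m-\frac92$, so the top-order singularity on $\La_-$ comes solely from the $f^{(4)}$ term.

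Next I would push this to the radiation field via Proposition \ref{new-sing-111}. Since $\La_-\subset\{p=0\}$, in the radiation coordinates $x=r^{-1}$, $s=t-r$, $\omega=z/|z|$ the image $\widetilde{\La_-}$ extends smoothly across $\{x=0\}$ and meets it transversally, with $\widetilde{\La_-}\cap\{x=0\}=\La_{-,\infty}=N^*\mcq^-_\infty$. Here I would compute $\mcq^-_\infty$ explicitly, exactly as in \eqref{eq-mcq-infty} for three waves, and check that the general-position choice of the $z_j$ makes $\La_{-,\infty}$ contain an open dense subset disjoint from every $N^*\Sigma_{j,\infty}$, $j=1,2,3,4$, and from every triple-interaction radiation pattern $\La_{\mcq^{jkl}_\pm,\infty}$. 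On a microlocal neighbourhood of such a point, $w_{1,1,1,1}$ is purely Lagrangian on $\La_-$ of order $-4m-\frac92$, elliptic precisely when $f^{(4)}(u_0(\ga_-))\neq0$, which by hypothesis {\bf B.\ref{H4N}} fails only if $u_0(\ga_-)=0$. Proposition \ref{new-sing-111} then gives, on that part of $\La_{-,\infty}$, $\mcn_+(w_{1,1,1,1})\in I^{-4m-\frac92+\oq}(\mr\times\ms^2,\La_{-,\infty})=I^{-4m-\frac{17}4}(\mr\times\ms^2,\La_{-,\infty})$, with principal symbol the restriction of $\sigma(w_{1,1,1,1})$ to $\widetilde{\La_{-,\infty}}$.

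Finally I would read off $f^{(4)}$. Because $H_\varrho$ is tangent to $\widetilde{\La_-}$ and transversal to $\{x=0\}$, the symbol is parallel transported along $\widetilde{\La_-}$ from the point over $\ga_-$ to the joined point of $\La_{-,\infty}$; hence the principal symbol of $\mcn_+(w_{1,1,1,1})$ at $(s,\omega,\mu,\varkappa)\in\La_{-,\infty}$ is $\frac{1}{4!}\mcw(\ga_-)f^{(4)}(u_0(\ga_-))$ times a nonvanishing explicit factor that depends on $u_0$ only through the smooth transport coefficients and not on $f^{(4)}$. Since $\mcw(\ga_-)=\prod_j|z^*-z_j|^{-1}\neq0$ is known, the principal symbol determines $f^{(4)}(u_0(\ga_-))$, including the value $0$ (recorded by the absence of a top-order singularity). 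Then, varying the tetrahedron $z_1,\dots,z_4$ and the common value $s_j=s^*$ moves $\ga_-$ over all of $\mr\times\mr^3$ — just as three moving points sufficed in Proposition \ref{add-sing-w} — so one obtains $f^{(4)}(u_0(t,x))$ for every $(t,x)$. The remainder $\mcz$ and the cutoffs $\chi$ do not affect the top-order symbol, for the reasons already used in Theorem \ref{expansion1} and Proposition \ref{add-sing-w}.

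The hardest part is the general-position verification: one must confirm that the quadruple-interaction radiation pattern $\La_{-,\infty}$ is not swallowed by the union of the four conormal patterns $N^*\Sigma_{j,\infty}$ and the four triple-interaction patterns $\La_{\mcq^{jkl}_\pm,\infty}$, so that the top-order piece we extract is unambiguously the $f^{(4)}$ contribution rather than something of equal or larger order carried in from a lower interaction. In the three-wave case this followed from exhibiting $\mcq^-_\infty$ and the three planes by hand; here the analogous computation, with four cones and their triple cones together with their radiation patterns, is bulkier and the non-containment must be checked with some care. The order bookkeeping that keeps the $f^{(2)},f^{(3)}$ contributions strictly below order $-4m-\frac92$ on $\La_-$ is the other delicate point, but that is exactly what Proposition \ref{reg-w1111-N} supplies.
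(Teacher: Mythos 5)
Your proposal follows essentially the same route as the paper: quote Proposition \ref{reg-w1111-N} for the interior order $-4m-\frac92$ and principal symbol on $\La_-$, pass to the radiation pattern via Proposition \ref{new-sing-111} (gaining $\oq$ to get $-4m-\frac{17}{4}$), and read off $f^{(4)}(u_0(\ga_-))$ from the transported symbol, varying the configuration to cover all of $\mr\times\mr^3$. The paper handles your ``general position'' concern by the same explicit device as in the three-wave case, taking $z_1,z_2,z_3$ as in \eqref{def-z1z2z3} and $z_4=(0,0,2c,0)$ with $s_j=s^*$, so your proposal is correct and matches the paper's argument.
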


We can apply this to the four spherical waves given by $s_1=s_2=s_3=s_0=0,$ $z_1,z_2,z_3$ given by \eqref{def-z1z2z3} and $z_4=(0,0,2c,0).$ The four waves will intersect at
\begin{gather*}
\gamma_-=\{ x_1=a, x_2=b, x_3=c, t=t_{0+}=-(a^2+b^2+c^2)^\ha\} \;\ \text{ and } \\ \gamma_+=\{ x_1=a, x_2=b, x_3=c, t=t_{0-}=-(a^2+b^2+c^2)^\ha\}.
\end{gather*}
The forward cones $\mcq_\pm$ are given by
\begin{gather*}
\mcq_\pm= \{t-t_{0\pm}= |z-\gamma_\pm|\}.
\end{gather*}
Their radiation patterns are given by
\begin{gather*}
\mcq_{\pm,\infty}= \{s-t_{0\pm}- \lan \omega, \gamma_\pm\ran=0\}.
\end{gather*}
By following the argument used in the case of the interaction of three waves, this determines $f^{(4)}(u_0(a,b,c,t_{0-})).$ Again by varying 
$s^*=s_1=s_2=s_3=s_4,$ this determines $$f^{(4)}(u_0(a,b,c,s^*+t_{0-})),$$ and hence $f^{(4)}(u_0(p))$ for all $p\in \mr^4.$

\section{Acknowledgements}  
      S\'a Barreto  and Uhlmann were visiting members of the Microlocal Analysis Program of the Mathematical Sciences Research Institute (MSRI) in Berkeley, California,  in the fall 2019, when part of this work was done. Their membership at the MSRI was supported by the National Science Foundation Grant No. DMS-1440140.
  
  A, S\'a Barreto is grateful to the Simons Foundation  for their support under grant \#349507, Ant\^onio S\'a Barreto. 

G. Uhlmann was partially supported by NSF, a Walker Family Endowed Professorship at UW and a Si-Yuan Professorship at IAS, HKUST.

%%===============================REFERENCES==========================================%

\end{document}